\newtheorem{definition}{Definition}
\newtheorem{remark}{Remark}
\newtheorem{theorem}{Theorem}
\newtheorem{lemma}{Lemma}
\newtheorem{assumption}{Assumption}
\newcommand\p[1]{\boldsymbol{#1}}
\newcommand\norm[1]{\left\lVert#1\right\rVert}
\title{A space-time isogeometric method for the partial differential-algebraic system
	of Biot's poroelasticity model}
\author{Jeremias Arf \footnotemark[2]
	\and Bernd Simeon \footnotemark[2]}
\providecommand{\keywords}[1]
{
	\small	
	\textbf{\textit{Keywords---}} #1
}
\begin{document}
	
	\maketitle
	\keywords{Biot's poroelasticity model, isogeometric analysis, space-time discretization, high-order convergence}
	\renewcommand{\thefootnote}{\fnsymbol{footnote}}
	
	\footnotetext[2]{ TU Kaiserslautern, Dept. of Mathematics, Gottlieb-Daimler-Str. 48, 67663 Kaiserslautern,\\ Germany ( E-mail: {\tt \{arf,simeon\}@mathematik.uni-kl.de}).}
	
\begin{abstract}
	Biot's equations of poroelasticity contain a parabolic system for the evolution of the pressure, which is coupled with a quasi-stationary equation for the stress tensor. Thus, it is natural to extend the existing work  on isogeometric space-time methods to this more advanced framework of a partial differential-algebraic equation (PDAE). 
	A space-time approach based on finite elements has already been  introduced. But we present a new weak formulation in space and time that is appropriate for an isogeometric discretization and    
	analyze the convergence properties.
	Our approach is based on a single variational problem and hence differs from the iterative space-time schemes considered so far. Further, it enables high-order convergence. Numerical  
	experiments that have been carried out 
	confirm the theoretical findings.
\end{abstract}


	\section{Introduction}
	\label{sec:introduction}
	Poroelasticity describes the coupling of   mechanical deformation with the flow of a fluid in a porous medium and has numerous applications in engineering. In the quasi-static case, which is most widely adopted, the model takes the form of a partial differential-algebraic equation (PDAE) that calls for appropriate discretizations in space and time. The conventional approach relies so far on  the method of lines, starting with a semi-discretization by finite elements in space followed by a time integration 
	of the resulting differential-algebraic system. We introduce here a novel scheme that treats space and time simultaneously and that applies isogeometric analysis (IGA) as discretization.
	In this way, we extend the framework of space-time methods to PDAE problems and show how 
	the powerful algorithmic machinery of IGA with its spline-based function spaces can be applied to such 
	coupled models.
	
	For more background on poroelasticity, we refer to the example of  reservoir  engineering  where
	oil and gas reservoirs,  as well as more sustainable energy resources like geothermal reservoirs,  are subject of  research \cite{reservoi_Biot1,Augustin,Biot_reservoir_finite}. This includes the problem of induced seismicity caused by the injection or extraction of fluids in the subsurface of the earth, leading to anthropogenic earthquakes \cite{eartquake_damage1,eartquake_damage2}. Further references on poroelasticity in earthquake engineering are \cite{earthquake_biot2,earthquake_biot4,earthquake_biot5}.  
	The works of Karl von Terzaghi \cite{Terzaghi} and Maurice Anthony Biot \cite{Biot1935,BiotGeneral,Biot1955TheoryOE} laid the foundation for poroelasticity and were triggered by the observation of \emph{consolidation}, meaning the  volume decrease of a fluid-saturated soil caused by an applied loading and fluid discharge. 
	Today, biophysics and biomedicine also make use of poroelastic models, see, e.g., \cite{Biology_Biot1} for the mechanical modelling of living  tissues and \cite{Biology_Biot3} for a  model of the fluid-structure interaction in the human brain.  
		
	Our article concentrates on the quasi-static Biot system where the deformation is assumed to be much slower than the fluid flow rate. The corresponding coupled model 
	and its numerical solution has been studied before. Without aiming at completeness, we mention  
	the finite element methods (FEM) for the Biot equations introduced in \cite{Biot_finite_element_1}, where Lagrange, Taylor-Hood and MINI elements are applied. The authors in  \cite{Biot_wheeler_continuous,Biot_wheeler_discrete} favor a method involving Raviart-Thomas elements. Further finite element approaches can be found in \cite{Biot_finite_element_2,Biot_finite_element_3,Biot_finite_element_4} and in the dissertations \cite{phillips,Biot_finite_element_dissertation}. 
	Note that there are different versions of the Biot model in the sense that the underlying set of primary unknown variables might differ. For example in \cite{Biot_two_filed} the Biot two-field system, in \cite{Biot_finite_element_2} a Biot three-field and in \cite{Biot_finite_element_3} a four-field  formulation are used. The two main reasons for the varying systems are on the one hand the possibility to improve stability properties of the numerical method. On the other hand sometimes specific variables like the fluid flux are of exceptional interest and a formulation in which such variables are incorporated is preferred. But three- or four-field formulations suffer from the drawback of an increased number of degrees of freedom. 

	A different class of methods are the ones based on IGA. In IGA, geometric approximations are avoided, and at the same
	time the underlying spline spaces offer various properties, including increased global smoothness. Isogeometric methods for the Biot system are considered in \cite{Biotmixed} and \cite{BiotIGA}. 
	An iterative method for solving Biot's model numerically based on a finite element discretization is introduced in \cite{BauseKoecher}. In the latter reference the authors analyze a space-time  scheme, i.e. the time variable itself is discretized with continuous or discontinuous finite elements.
	
	In this paper we also follow the idea of space-time discretizations and use the approach of \cite{para} for a parabolic
	evolution equation as starting point. The extension to 
	the Biot two-field system is not straightforward since we are facing here several challenges. First, an appropriate weak formulation for both pressure and displacement variables has to be derived. Second, the treatment of the elastic momentum balance requires special care as it represents a constraint with respect to the time axis. And third, finally, the 
	pressure variable is known to be sensitive to oscillations that call for additional measures.
	A space-time method leads directly to a linear system that includes all time steps and hence is much larger than 
	a spatial discretization alone. However, by closer inspection of the linear system one notices a staircase structure that reflects the propagation of the solution over time and that results in a very sparse matrix. With appropriate fast iterative 
	and parallel solvers, the linear system can be tackled as a whole, but in our work, the focus is on the
	discretization itself. We also point out that in principal, the space-time scheme can be employed to introduce full adaptivity 
	in space and time, which comprises in particular local time step changes in combination with local spatial refinement. 
	
	The outline of the paper is as follows: 
	Section 2 explains briefly the Biot system while in Section 3 we introduce the space-time method and the underlying space-time discretization. Section 4 presents a convergence result,  
	and in the last part we discuss numerical examples. 
	
	The notation that we use is fairly standard. We write for scalar Sobolev spaces over an open domain $D$ just  $L^2(D)= H^0(D)$, $H^k(D)$ for some $k \in \mathbb{N}$. The standard scalar product of the Lebesgue space $L^2(D)$ is denoted by $\langle \cdot , \cdot \rangle_{L^2(D)}$.  In case of vector-valued Sobolev spaces we use a bold type notation, for example $\p{H}^k(D) \coloneqq (H^k(D), \dots , H^k(D))$ etc. Moreover,  $\norm{\cdot}_{L^2(D)}, \ \norm{\cdot}_{\p{L}^2(D)}, \ \norm{\cdot}_{H^k(D)}, \ \norm{\cdot}_{\p{H}^k(D)}$ stand for the norms induced by the inner products in the respective spaces.  Finally, $\nabla_x$ denotes the classical nabla operator in the spatial coordinates.


\section{The Biot system}
\label{sec:Biot_system}
In this section we outline the coupled model of Biot, following the references
\cite{Augustin,Biot_wheeler_continuous} and \cite{Showalter}.
The porous medium   consists of a solid skeleton  and permeable voids (pores) that are filled by some fluid. It is identified with a connected and bounded {Lipschitz domain} $\Omega \subset \mathbb{R}^d, \ d \in \{1,2,3\}$. 
Model variables  are the time-dependent mechanical displacement field $\p{u}(t) \colon \Omega \rightarrow \mathbb{R}^d$ as well as the fluid pore pressure $p(t)\colon \Omega \rightarrow \mathbb{R}$ and the fluid flux $\ \p{q}(t)\colon \Omega \rightarrow \mathbb{R}^d$, with $t$ as the time. Flux and pressure are connected via \emph{Darcy's law}
\begin{align*}
\p{q} = -\frac{\p{K}}{\eta_f} \big(\nabla_x p-\rho_f\p{g}\big), \,
\end{align*}
which gives a linear relation between  the pressure gradient and the flux. In the last equation $\p{K}$ denotes the permeability tensor, represented by a spd (symmetric positive-definite) matrix, $\eta_f$ the fluid viscosity, $\rho_f$ the fluid density and $\p{g}$ some body force. For simplicity, we drop the body force from now on.  

The first governing equation of the Biot system can be derived by means of the balance of momentum and linear elasticity, namely
\begin{align}
\label{eq:balance _of_momentum}
-\nabla_x \cdot  \p{\tilde{\sigma}}(\p{u},p)=\p{f} , \,
\end{align} 
where $\p{f}$ is a given force distribution.
In this equation, $\p{\tilde{\sigma}}= \p{\sigma}-b \,  p \,  \p{I}$ denotes the total stress tensor and $b$ is the so-called Biot-Willis constant. Assuming a linear-elastic behavior, the solid phase satisfies Hooke's law $
{{\sigma}_{ij}}=C_{ijkl} {{\varepsilon}_{kl}}$ (Einstein not.)
with $\p{C}=(C_{ijkl})$ being the elasticity tensor, 
$\p{\varepsilon}(\p{u}) = (\nabla_x \p{u} +\nabla_x \p{u}^t)/2$ the strain tensor and  
$\p{\sigma}$ the stress tensor. 
Furthermore, we assume a quasi-static behavior where second time derivatives are neglected. For the special case of an isotropic and homogeneous solid we can simplify the elasticity tensor to $C_{ijkl}= \lambda \, (\delta_{ij} \delta_{kl}) + \mu \, (\delta_{ik}\delta_{jl}+\delta_{il}\delta_{jk}),$ where $\lambda$ and $\mu$ are the  Lam\'{e} constants and $\delta$ the Kronecker delta.   

The second governing equation is obtained from a conservation law
for the fluid phase, which reads
\begin{align}
\label{eq:conservation_of_mass}
\partial_t \big( c_0 p + b \nabla_x \cdot \p{u} \big) + \nabla_x \cdot \p{q} =  g.
\end{align}
Here, we can interpret $c_0 p + b \nabla_x \cdot \p{u}$ as the fluid content and $g$ as a fluid source term. The constant $c_0$ is the \emph{constrained specific storage coefficient} and  in applications it is often close to zero.
Both, \eqref{eq:balance _of_momentum} and  \eqref{eq:conservation_of_mass}  together  with Darcy's law lead to the 
{\emph{Biot two-field system}:}
\begin{align}
\label{Eq1}
-\nabla_x \cdot  \p{\tilde{\sigma}}(\p{u},p)=\p{f},  
\\ \label{Eq2}
\partial_t \big( c_0 p + b \nabla_x \cdot \p{u} \big) - \nabla_x \cdot \frac{\p{K}}{\eta_f} \nabla_x p =  g. 
\end{align}
As initial conditions we require $p(0) = 0 $ and $\p{u}(0)=\p{0}$, i.e. pressure and displacement are zero at the start time $t=0$. For the boundary conditions we introduce two partitions $\overline{\Gamma_u} \cup\overline{ \Gamma_t} = \partial \Omega $ and $\overline{\Gamma_p} \cup\overline{ \Gamma_f} = \partial \Omega $ of the boundary $\partial \Omega$ of the spatial domain with $\Gamma_u \cap \Gamma_t = \emptyset$ and  $\Gamma_p \cap \Gamma_f = \emptyset$. Then we choose
\begin{alignat}{3}
\label{init_bound_con_1}
\p{\tilde{\sigma}} \cdot \p{n}_x &= \p{t}_n   \hspace{0.5cm} &&\textup{on}  \hspace{0.5cm} \Sigma_t &&\coloneqq \Gamma_t  \times (0,T),  \ \ \ \ \ (\textup{tension BC.})\\
\vspace{0.06cm} \label{init_bound_con_2}
  \p{\mathcal{K}} \nabla_xp \cdot \p{n}_x &= v_f  \hspace{0.5cm} &&\textup{on} \hspace{0.5cm}  \Sigma_f &&\coloneqq \Gamma_f   \times (0,T), \ \ \ \ \ (\textup{flux BC.})\\
\vspace{0.06cm} \label{init_bound_con_3}
 \p{u} &= 0 \hspace{0.5cm} &&\textup{on} \hspace{0.5cm} \Sigma_u &&\coloneqq \Gamma_u \times (0,T), \\
\vspace{0.06cm} \label{init_bound_con_4}
 p &= 0  \hspace{0.5cm} &&\textup{on} \hspace{0.5cm} \Sigma_p &&\coloneqq  \Gamma_p  \times (0,T).
\end{alignat}
Here $I \coloneqq (0,T)$ is the time interval of interest and $\p{\mathcal{K}} \coloneqq \eta_f^{-1}  \p{K}$. 
Moreover, $\p{n}_x$ denotes the outer unit normal vector.

A study on the existence of (weak) solutions to the Biot system can be found in \cite{Showalter}. 
To show the existence of discrete solutions within the scope of our proposed method we have to postulate the next assumption.
\begin{assumption}
{\rm	Let the boundary parts $\Gamma_u$ and $\Gamma_p$ have positive measure, meaning $0 < \int_{\Gamma_u} 1 \, ds, \ \int_{\Gamma_p} 1 \, ds$. Besides, let the  tensor $\p{\mathcal{K}}(\p{x})$ be uniformly elliptic and bounded  in $\Omega$. And w.l.o.g. we assume $c_0 \leq 1$. }
\end{assumption}

The most important model and material parameters are summarized in the tables Tab. \ref{tab1} and Tab. \ref{tab2} below.
\begin{table}[t]
	\renewcommand{\arraystretch}{1.2}
	\renewcommand{\footnoterule}{\rule{0pt}{0pt}}
	\begin{minipage}[t]{8.7cm}
		\caption{\label{tab1}{Material parameters}}
		\vspace{1.2ex} \footnotesize
		\begin{tabular}{c c c c} 
			\hline\hline 
			Parameter & phy. unit &   Explanation   \\  \hline \\ [\dimexpr-\normalbaselineskip+3pt]   %
			$\p{K}$ & $[m^2]$ &  permeability tensor\vspace{0.1cm} \\
			$0<\eta_f$ & $[N \, s/m^2]$ &   viscosity of the fluid  \vspace{0.1cm} \\
			$0<\rho_f$ & $[kg/m^3]$ & density of the fluid \vspace{0.1cm} \\ 
			$0<b$ & $[1]$ &  Biot-Willis coefficient \vspace{0.1cm} \\
			$0\leq c_0 $ & $[m^2/N]$ &  constrained specific storage   \\ [1ex] \hline 
		\end{tabular}
	\end{minipage}
	\hspace{0cm}
	\begin{minipage}[t]{6.5cm}
		\caption{\label{tab2}{Model quantities}}
		\vspace{1.2ex} \footnotesize
		\begin{tabular}{c c c c} 
			\hline\hline 
			Variable & phy. unit &   Explanation  \\  \hline \\ [\dimexpr-\normalbaselineskip+3pt] 
			$\p{\tilde{\sigma}}$ & $[N/m^2]$ &  total stress tensor \vspace{0.1cm} \\
			$\p{C}$ & $[N/m^2]$ &  elasticity tensor \vspace{0.1cm} \\ 
			$\p{\varepsilon}$ & $[1]$ &  lin. strain tensor  \vspace{0.1cm} \\
			$\p{q}$ & $[m/s]$ &  volumetric flux  \vspace{0.1cm} \\
			$\p{u}$ & $[m]$ &  displacement \vspace{0.1cm} \\
			$p$ & $[N/m^2]$ &  fluid pressure \\ [1ex] 
			\hline 
		\end{tabular}
	\end{minipage}
\end{table}
\normalsize


\section{Space-time discretization and discrete variational formulation}
\label{sec:space_time_discretization}
Regarding the discretization of the Biot system, we look first at the basics of isogeometric analysis and proceed with the 
derivation of a discrete space-time variational method for it.

\subsection{Isogeometric analysis}
\label{subsec:IGA}
Introduced by Hughes et al. \cite{IGA2}, the concept of IGA developed in the last 15 years to a powerful tool in numerical analysis. The basic idea is the simultaneous use of spline functions for the geometric modelling and the definition of discrete spaces. IGA is able to represent various complex and curved-boundary domains exactly and furthermore one has the possibility to easily increase or lower the smoothness of functions in the discrete spaces. Following \cite{IGA1,IGA3,IGA4} for a brief
exposition, we call an  increasing sequence of real numbers $\Xi \coloneqq \{ \xi_1 \leq  \xi_2  \leq \dots \leq \xi_{n+r+1}  \}$ for some $r \in \mathbb{N}$   \emph{knot vector}, where we assume  $0=\xi_1=\xi_2=\dots=\xi_{r+1}, \ \xi_{n+1}=\xi_{n+2}=\dots=\xi_{n+r+1}=1$, and call such knot vectors $r$-open. 
Further, the multiplicity of the $j$-th knot is denoted by $m(\xi_j)$.
Then  the univariate B-spline functions $\hat{B}_{j,r}(\cdot)$ of degree $r$ corresponding to a given knot vector $\Xi$ is defined recursively by the \emph{Cox-DeBoor formula}:

\begin{align*}
\hat{B}_{j,0}(\zeta) \coloneqq \begin{cases}
1, \ \ \textup{if}  \ \zeta \in [\xi_{j},\xi_{j+1}) \\
0, \ \ \textup{else},
\end{cases}
\end{align*}
\textup{and if} $r \in \mathbb{N}_{\geq 1} \ \textup{we set}$ 
\begin{align*}
\hat{B}_{j,r}(\zeta)\coloneqq \frac{\zeta-\xi_{j}}{\xi_{j+r}-\xi_j} \hat{B}_{j,r-1}(\zeta)  +\frac{\xi_{j+r+1}-\zeta}{\xi_{j+r+1}-\xi_{j+1}} \hat{B}_{j+1,r-1}(\zeta),
\end{align*}
where one puts $0/0=0$ to obtain  well-definedness. The multivariate extension of the last spline definition is achieved by a tensor product construction. In other words, we set for a given tensor knot vector   $\p{\Xi} \coloneqq \Xi_1 \times   \dots \times \Xi_d $, where the $\Xi_{l}=\{ \xi_1^{l}, \dots , \xi_{n_l+r_l+1}^{l} \}, \ l=1, \dots , d$ are $r_l$-open,   and a given \emph{degree vector}   $\p{r} \coloneqq (r_1, \dots , r_d)$ for the multivariate case
\begin{align*}
\hat{B}_{\p{i},\p{r}}(\p{\zeta}) \coloneqq \prod_{l=1}^{d} \hat{B}_{i_l,r_l}(\zeta_l), \ \ \ \ \p{\zeta} \coloneqq (\zeta_1,\dots,\zeta_d), \ \  \forall \, \p{i} \in \mathit{\mathbf{I}}, 
\end{align*}
with  $d$ as  the underlying dimension of the parametric domain $\hat{\Omega}= (0,1)^d$ and $\mathit{\mathbf{I}}$ the multi-index set $\mathit{\mathbf{I}} \coloneqq \{ (i_1,\dots,i_d) \  | \  1\leq i_l \leq n_l, \ l=1,\dots,d  \}$.
To enlarge the possibilities of the representation of geometric objects, one can generalize the definition of B-splines 
to rational B-splines. Namely, choosing strictly positive weights $0<w_{\p{i}}, \p{i} \in \mathit{\mathbf{I}}$ and exploiting the notation from above we introduce the weight function $$W(\p{\zeta}) \coloneqq \sum_{\p{i}\in \mathit{\mathbf{I}}} w_{\p{i}} \, \hat{B}_{\p{i},\p{r}}(\p{\zeta}).$$
We define the non-uniform rational B-spline (NURBS) basis functions $\hat{N}_{\p{i},\p{r}}(\p{\zeta})$ w.r.t. to the weight function $W$ as follows:
\begin{align*}
\hat{N}_{\p{i},\p{r}}(\p{\zeta}) \coloneqq w_{\p{i}} \, \hat{B}_{\p{i},\p{r}}(\p{\zeta}) \big(W(\p{\zeta})\big)^{-1}  , \ \ \ \ \forall \, \p{i} \in \mathit{\mathbf{I}}.
\end{align*}
B-splines (and the same for NURBS) fulfil several properties and for our purposes the most important ones are:
\begin{itemize}
	\item If for all internal knots the multiplicity satisfies $1 \leq m(\xi_j^l) \leq m \leq r \leq r_l, \ \forall l $, then the B-spline basis functions $\hat{B}_{\p{i},\p{r}}$ are globally $C^{r-m}$-continuous.
	\item The B-splines $ \{\hat{B}_{\p{i},\p{r}} \ | \ \ \p{i} \in \mathit{\mathbf{I}} \}$ are linearly independent.
\end{itemize}

Back to the Biot problem, the aim is the definition of a space-time discretized variational formulation. Consequently we consider  as in \cite{para} the space-time cylinder  $\mathcal{Q}=\Omega \times (0,T)$. 
This so-called physical domain is assumed to be parametrized by means of NURBS or B-splines, respectively.
More precisely, we have a parametrization of the form 
$$\tilde{\p{\Phi}} \colon (0,1)^{d+1} \eqqcolon \hat{\mathcal{Q}} \rightarrow \mathcal{Q} \ , \ \tilde{\p{\zeta}} \mapsto   \sum_{\tilde{\p{i}} \in {\tilde{\mathit{\mathbf{I}}}} } \tilde{C}_{\tilde{\p{i}}} \, \hat{N}_{\tilde{\p{i}},\tilde{\p{r}} }(\tilde{\p{\zeta}}) \ ,
$$ 
where  the $\tilde{C}_{\tilde{\p{i}}}=(C_{\p{i}},t_{i_{d+1}})  \in \mathbb{R}^{d+1}$ are the \emph{control points} and  $\tilde{\mathit{\mathbf{I}}}=\{(i_1, \dots,i_d,i_{d+1} ) \in \mathit{\mathbf{I}} \times \textup{I}_t\}$, \ $\tilde{\p{\zeta}}=(\p{\zeta},\zeta_{d+1}),$ and $ \ \tilde{\p{r}}=(\p{r},r_{d+1})$ for suitable index sets $\p{\textup{I}}$ and $\textup{I}_t$.
Due to the  product structure of the space-time cylinder we can assume that the parametrization can be written as 

\begin{align}
\label{eq:tensor_structure}
&\tilde{\p{\Phi}}  \colon \hat{\mathcal{Q}} \rightarrow \Omega  \times (0,T) \ , \ \tilde{ \p{\zeta}} \mapsto  \big(\p{\Phi}(\p{\zeta}) \ , \  \Phi(\zeta_{d+1}) \big),  \\[1mm]  \textup{with} \hspace{0.5cm} &\p{\Phi}(\p{\zeta}) = \sum_{\p{i} \in \mathit{\mathbf{I}}} C_{\p{i}} \, \hat{N}_{\p{i},\p{r} }(\p{\zeta}) \hspace{0.5cm}  \textup{and} \hspace{0.5cm} \Phi(\zeta_{d+1}) = \sum_{i \in \textup{I}_t} t_i \, \hat{B}_{i,r_{d+1} }(\zeta_{d+1})= T \, \zeta_{d+1}. \nonumber
\end{align}

Given such a parametrization the knots stored in the knot vector $\tilde{\p{\Xi}} \coloneqq \p{\Xi}  \times \Xi_{d+1}$, corresponding to  the underlying NURBS and splines, determine a mesh in the parametric domain $\hat{\mathcal{Q}}$, namely  $\hat{M} \coloneqq \{ K_{\p{j}}\coloneqq (\psi_{j_1}^1,\psi_{j_1+1}^1 ) \times \dots \times (\psi_{j_{d+1}}^{d+1},\psi_{j_{d+1}+1}^{d+1} ) \ | \  \p{j}=(j_1,\dots,j_{d+1}), \ \textup{with} \ 1 \leq j_i <n_i\},$ and
with $\tilde{\p{\Psi}}= \{\psi_1^1, \dots, \psi _{n_1}^1\}  \times \dots \times \{\psi_1^{d+1}, \dots, \psi _{n_{d+1}}^{d+1}\}$  \  \textup{as  the knot vector} \ $\tilde{\p{\Xi}}$ \ 
\textup{without knot repetitions}. 

The image of this mesh under the mapping $\tilde{\p{\Phi}}$, i.e. $\mathcal{M} \coloneqq \{\tilde{\p{\Phi}}(\hat{K}) \ | \ \hat{K} \in \hat{M} \}$, gives us a mesh structure in the physical domain. By inserting knots without changing the parametrization  we can refine the mesh, which is the concept of $h$-refinement \cite{IGA2,IGA1}. Furthermore, we can introduce the mesh size  $h \coloneqq \max\{h_{\mathcal{K}} \ | \ \mathcal{K} \in \mathcal{M} \}$, where  $h_{\mathcal{K}} = \textup{diam}(\mathcal{K})$ is the diameter of the mesh element $\mathcal{K}$.
For the rest of this article, we assume the mesh to be regular as defined next:

\begin{assumption}{(Regular mesh)}\\
	The parametrization mapping is smooth on the closure of each mesh element $\overline{\hat{K}}, \ \hat{K} \in \hat{M}$ and has a smooth inverse, meaning $\tilde{\p{\Phi}}_| \in C^{\infty}(\overline{\hat{K}})$, \ $\tilde{\p{\Phi}}^{-1}_| \in C^{\infty}(\tilde{\p{\Phi}}(\overline{\hat{K}}))$.  \\
	Further we can find a constant $0< c_M < \infty$, independent from mesh refinement, such hat for the element sizes it holds  $h_{\mathcal{K}} \leq h \leq   c_M \, h_{\mathcal{K}} $ for all mesh elements $\mathcal{K} \in \mathcal{M}$. \\
	And for the coarsest mesh the boundary segments $\Sigma_u$ and $\Sigma_p$ are the unions of full  boundary mesh faces. 
\end{assumption}

Clearly the global mesh is composed of a spatial mesh and a mesh in the time interval $(0,T)$ as consequence of the product structure. Therefore one can introduce a spatial mesh size $h_S$ and a mesh size in the time domain $h_T$ in an analogous manner. Although the  mesh sizes $h_S, \ h_T$ are more convenient for our considerations we also keep the global mesh size $h$  to shorten the notation.

Lastly, we define the discrete spaces, following the isogeometric paradigm, which are used below for the discretized variational formulation via
$$\mathcal{V}_{h, \tilde{\p{r}}} \coloneqq \textup{span}\{ v_{h}=\hat{N}_{\tilde{\p{i}},\tilde{\p{r}} } \circ \tilde{\p{\Phi}}^{-1} \ | \  \tilde{\p{i}} \in \tilde{\mathit{\mathbf{I}}}  \},$$
spanned by the  push-forwards of the NURBS basis functions.

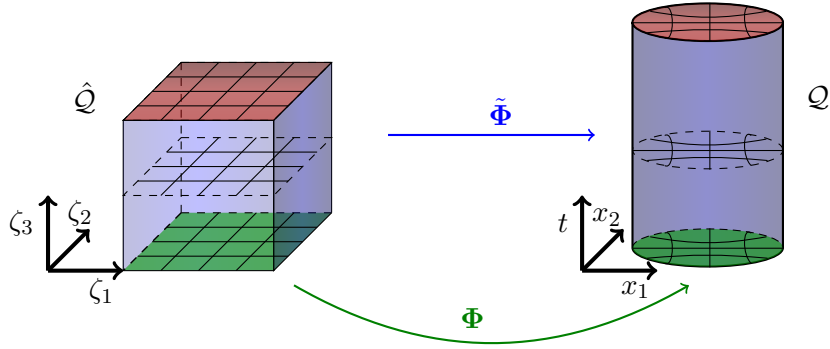
\begin{figure}
	\begin{center} 
		\begin{tikzpicture}[scale=0.5]
		\fill[left color=black!50!blue,right color=black!50!blue,middle color=blue,shading=axis,opacity=0.25] (2,0) -- (2,6) arc (360:180:2cm and 0.5cm) -- (-2,0) arc (180:360:2cm and 0.5cm);
		\draw[thick] (-2,6) -- (-2,0) arc (180:360:2cm and 0.5cm) -- (2,6) ++ (-2,0) circle (2cm and 0.5cm);
		\fill[top color=black!50!red,bottom color=red,middle color=red!90!white,shading=axis,opacity=0.35 ] (0,6) circle (2cm and 0.5cm);
		\draw[] (0,6) circle (2cm and 0.5cm);
		\fill[top color=green,bottom color=green,middle color=green,shading=axis,opacity=0.35] (0,0) circle (2cm and 0.5cm);
		\draw[densely dashed] (-2,0) arc (180:0:2cm and 0.5cm);

		\draw[out=30,in=120,->, thick, blue] (-8.5,3,0) -- (-3,3,0);
		\node[above,blue] at (-5.5,3,0) {$\tilde{\p{\Phi}}$};
		x={(2-14,0)},
		y={(0-14,2)},
		z={({2*cos(45)},{2*sin(45)})},
		]
		\def\a{4}
		
		\coordinate (A) at (0-14,0+0.93,0); 
		\coordinate (B) at (\a-14,0+0.93,0) ;
		\coordinate (C) at (\a-14,\a+0.93,0); 
		\coordinate (D) at (0-14,\a+0.93,0); 
		\coordinate (E) at (0-14,0+0.93,\a); 
		\coordinate (F) at (\a-14,0+0.93,\a); 
		\coordinate (G) at (\a-14,\a+0.93,\a); 
		\coordinate (H) at (0-14,\a+0.93,\a);
		\draw[] (E) -- (H)--(G)--(F);
		\draw[left color=black!50!blue,right color=black!50!blue,middle color=blue,shading=axis,opacity=0.25] (E) -- (H)--(G)--(F);
		\fill[top color=green,bottom color=green,middle color=green,shading=axis,opacity=0.35] (0-14,0+0.93,0) -- (4-14,0+0.93,0) -- (4-14,0+0.93,4) -- (0-14,0+0.93,4) -- (0-14,0+0.93,0);
		\node[above] at (-0.30-16,0+0.7,1) {${\zeta}_2$};
		\node[above] at (1.50-16,-1+0.85,4.2) {${\zeta}_1$};
		\node[left] at (-1.8-15.8,0+0.7,0.1) {${\zeta}_3$};
		
		\draw[dashed] (A) 
		-- (B) ;
		\draw[dashed] (A)--(D);
		\draw (D)--(C);
		\draw[dashed] (A)--(E);
		\draw (B)--(C);
		\draw (C)--(D);
		\draw[dashed] (A)--(E);
		\fill[left color=black!50!blue,right color=black!50!blue,middle color=blue,shading=axis,opacity=0.25] (B) -- (F) -- (G) -- (C);
		\draw(B) -- (F) -- (G) -- (C);
		\fill[top color=black!50!red,bottom color=red,middle color=red!90!white,shading=axis,opacity=0.35] (C)--(G) -- (H) -- (D);
		\draw[] (C)--(G) -- (H) -- (D);
		\draw[] (E) -- (F);
		\draw[line width=1.55pt, ->, black] (0-16,0+0.93,4) -- (0-16,2+0.93,4);
		\draw[line width=1.55pt, ->, black] (0-16,0+0.93,4) -- (2-16,0+0.93,4);
		\draw[line width=1.55pt, ->, black] (0-16,0+0.93,4) -- (1.1-16,1.1+0.93,4);
		
		\draw[line width=1.55pt, ->, black] (-1.8,0+0.93,4) -- (-1.8,2+0.93,4);
		\draw[line width=1.55pt, ->, black] (0-1.8,0+0.93,4) -- (2-1.8,0+0.93,4);
		\draw[line width=1.55pt, ->, black] (0-1.8,0+0.93,4) -- (1.1-1.8,1.1+0.93,4);
		
		\node[above] at (-0.30-2,0+0.7,1) {$x_2$};
		\node[above] at (1.50-1.8,-1+1,4.2) {$x_1$};
		\node[left] at (-1.8-1.6,0+0.7,0.1) {$t$};
		
		\draw[thick,->,green!50!black,out=-30,in=-150] (-11,-1) to (-0.5,-1);
		\node[green!50!black,above] at (-6.25,-2.4) {$\p{\Phi}$};
		\node[left] at (-16,4) {$ \hat{\mathcal{Q}}$};
		\node[left] at (3.5,4) {$\mathcal{Q}$};
		\draw[]  (-14+1,0.93,0) to (-14+1,0.93,4);
		\draw[]  (-14+3,0.93,0) to (-14+3,0.93,4);
		\draw[]  (-14+2,0.93,0) to (-14+2,0.93,4);
		\draw[]  (-14,0.93,1) to (-14+4,0.93,1);
		\draw[]  (-14,0.93,2) to (-14+4,0.93,2);
		\draw[]  (-14,0.93,3) to (-14+4,0.93,3);
		\draw[out = -10, in = -170]  (-1+0.01,0.93-0.1,1.4) to (2.5-1+0.6,0.93-0.1,1.4);
		\draw[]  (-1.05,0.93,2.45) to (2.95,0.93,2.45);
		\draw[out = 5, in = 175]  (-1+0.7,0.93+0.1,3.4) to (3-1+0.9,0.93+0.1,3.4);
		\draw[]  (1,0.93+0.53,2.45) to (1,0.93-0.51,2.45);
		\draw[out = 30, in = -30]  (-0.2,0.93+0.4,2.45) to (-0.2,0.93-0.4,2.45);
		\draw[out = 140, in = 140]  (2.1,0.93+0.4,2.45) to (2.1,0.93-0.4,2.45);

		\draw[]  (-14+1,2.93,0) to (-14+1,2.93,4);
		\draw[]  (-14+3,2.93,0) to (-14+3,2.93,4);
		\draw[]  (-14+2,2.93,0) to (-14+2,2.93,4);
		\draw[]  (-14,2.93,1) to (-14+4,2.93,1);
		\draw[]  (-14,2.93,2) to (-14+4,2.93,2);
		\draw[]  (-14,2.93,3) to (-14+4,2.93,3);
		
		\draw[]  (-14+1,4.93,0) to (-14+1,4.93,4);
		\draw[]  (-14+3,4.93,0) to (-14+3,4.93,4);
		\draw[]  (-14+2,4.93,0) to (-14+2,4.93,4);
		\draw[]  (-14,4.93,1) to (-14+4,4.93,1);
		\draw[]  (-14,4.93,2) to (-14+4,4.93,2);
		\draw[]  (-14,4.93,3) to (-14+4,4.93,3);
		
		\draw[dashed] (0-14,2+0.93,0) --  (4-14,2+0.93,0) -- (4-14,2+0.93,4) -- (0-14,2+0.93,4) -- (0-14,2+0.93,0);
		
		\draw[dashed] (0-14,4+0.93,0) --  (4-14,4+0.93,0) -- (4-14,4+0.93,4) -- (0-14,4+0.93,4) -- (0-14,4+0.93,0);

		\draw[out = -10, in = -170]  (-1+0.01,0.93-0.1+2.6,1.4) to (2.5-1+0.6,0.93-0.1+2.6,1.4);
		\draw[]  (-1.05,0.93+2.6,2.45) to (2.95,0.93+2.6,2.45);
		\draw[out = 5, in = 175]  (-1+0.7,0.93+0.1+2.6,3.4) to (3-1+0.9,0.93+0.1+2.6,3.4);
		\draw[]  (1,0.93+0.53+2.6,2.45) to (1,0.93-0.51+2.6,2.45);
		\draw[out = 30, in = -30]  (-0.2,0.93+0.4+2.6,2.45) to (-0.2,0.93-0.4+2.6,2.45);
		\draw[out = 140, in = 140]  (2.1,0.93+0.4+2.6,2.45) to (2.1,0.93-0.4+2.6,2.45);
		\draw[dashed] (0,2.6) circle (2cm and 0.5cm);

		\draw[out = -10, in = -170]  (-1+0.01,0.93-0.1+6,1.4) to (2.5-1+0.6,0.93-0.1+6,1.4);
		\draw[]  (-1.05,0.93+6,2.45) to (2.95,0.93+6,2.45);
		\draw[out = 5, in = 175]  (-1+0.7,0.93+0.1+6,3.4) to (3-1+0.9,0.93+0.1+6,3.4);
		\draw[]  (1,0.93+0.53+6,2.45) to (1,0.93-0.51+6,2.45);
		\draw[out = 30, in = -30]  (-0.2,0.93+0.4+6,2.45) to (-0.2,0.93-0.4+6,2.45);
		\draw[out = 140, in = 140]  (2.1,0.93+0.4+6,2.45) to (2.1,0.93-0.4+6,2.45);
		\draw[dashed] (0,6) circle (2cm and 0.5cm);

		\end{tikzpicture}
	\end{center}\label{fig:space_time_domain}
	\caption{ \small We exploit the tensor product structure of $\mathcal{Q}$ for parametrizing the space-time cylinder.}
\end{figure}
For simplicity we assume the same polynomial degree  in each spatial coordinate direction and write $r_T$ for the polynomial degree w.r.t. the time parameter. Based on this we can define the test spaces for the pressure $p$ and the displacement $\p{u}$. Let $r_S \in \mathbb{N}_{\geq 1}$. We set 
\begin{align*}
\mathcal{V}_{h,r_S,r_T} \coloneqq  \mathcal{V}_{h, \tilde{\p{r}}}  \hspace{1cm} \textup{with} \hspace{1cm} \tilde{\p{r}}= (r_S, \dots , r_S,r_T).
\end{align*}
Let $r_u$ and $r_p$ denote the underlying spatial polynomial degrees for the displacement and pressure. Then the discrete displacement and pressure spaces  are 
\begin{align*}
\p{\mathcal{V}}_{h,r_u,r_T} &\coloneqq \big( \mathcal{V}_{h,r_u,r_T}\big)^d \cap \{ \p{v} \in \big(C^0(\mathcal{Q}) \big )^d \ |  \ \p{v}=0 \ \textup{on} \ \Sigma_u \cup \Sigma_0\} ,  \\ \ 
\mathcal{W}_{h,r_p,r_T} &\coloneqq  \mathcal{V}_{h,r_p,r_T}  \cap \{ q \in C^0(\mathcal{Q})   \ |  \ q=0 \ \textup{on} \ \Sigma_p \cup \Sigma_0 \} ,
\end{align*}
with $\Sigma_0 \coloneqq \Omega \times \{0 \}$.
We remark that it is possible to write these function spaces as product spaces, namely 
\begin{align}
\label{eq : product spaces}
&\p{\mathcal{V}}_{h,r_u,r_T} = \p{V}_{h_S,r_u} \otimes V_{h_T,r_T}   \hspace{1cm} \textup{and}  \hspace{1cm} \mathcal{W}_{h,r_p,r_T} = W_{h_S,r_p}  \otimes  V_{h_T,r_T}, 
\end{align}
\textup{where} $\p{V}_{h_S,r_u}, \ W_{h_S,r_p}$  \textup{are NURBS based approximation spaces corresponding to the spatial discretization and} $V_{h_T,r_T}  $ \textup{is the finite dimensional space for the time discretization}.  

\subsection{Discrete space-time variational formulation}
\label{subsec:discrete_formulation}
Starting point for the discretized variational formulation is the  classical Biot two-field model.
For the derivation we consider the solution to satisfy $p \in H^2(\mathcal{Q})$ and $\p{u} \in \p{H}^3(\mathcal{Q})$
while the right-hand sides fulfil $\p{f} \in \p{H}^1(\mathcal{Q}), \ g \in L^2(\mathcal{Q})$.  Since  the Biot equations define a PDAE  we combine the original Biot sytem with the differentiated first  equation 
\begin{align}
\label{eq:Biot_1_dt}
-h_T \, \partial_t\nabla_x \cdot  \p{\tilde{\sigma}}(\p{u},p)= h_T \, \partial_t\p{f}. 
\end{align}
This last differentiation step is inspired by the differentiation procedure used in  DAE (differential algebraic equation) theory in order to obtain  underlying ODEs (ordinary differential equations); see e.g. \cite{Simeon}.
To be more precise,  we choose a differentiated test function $ \partial_t \p{v}_h, \ \p{v}_h \in \p{\mathcal{V}}_{h,r_u,r_T}$ and multiply the sum \eqref{Eq1} $+ $ \eqref{eq:Biot_1_dt}  of the two equations by this test function. Integration over the whole space-time cylinder yields 
\begin{align*}
\langle	-\nabla_x \cdot  (\p{\sigma}- b \, p \p{I})- h_T \, \partial_t [\nabla_x \cdot ( \p{\sigma}- b \, p \p{I}) ] \ , \ \partial_t\p{v}_h \rangle_{\p{L}^2(\mathcal{Q})}=\langle \p{f} + h_T \partial_t \p{f} ,\partial_t\p{v}_h \rangle_{\p{L}^2(\mathcal{Q})},  
\end{align*}
and 
integration by parts along with the Einstein summation convention leads to
\begin{align}
\label{eq:Blueprint1}
&\int_{\mathcal{Q}} \ C_{ijkl} \, {\varepsilon}_{kl}(\p{u}+ h_T \partial_t \p{u}) \,  {\varepsilon}_{ij}(\partial_t\p{v}_h) \ d\p{x}dt- b \,  \int_{\mathcal{Q}} \,  (p + h_T \partial_t p)  \, \nabla_x  \cdot \partial_t\p{v}_h    \, d\p{x}dt   \\  
&\hspace{0.5cm}+\int_{\partial \mathcal{Q}} \, \big( \underbrace{(-{\p{\sigma}} + b \, p \p{I})}_{=-\p{\tilde{\sigma}}} + h_T \partial_t  (-{\p{\sigma}} + b \, p \p{I}) \big) \cdot \p{n}_x \  \partial_t\p{v}_h  \, ds	 = \langle \p{f} + h_T \partial_t \p{f} ,\partial_t\p{v}_h \rangle_{\p{L}^2(\mathcal{Q})}. \nonumber
\end{align}

On the other hand, the multiplication of the evolution equation \eqref{Eq2} for the pressure by a time-upwind test function $q_h + h_T \, \partial_t q_h$ gives, again using integration by parts,
\begin{align}
\label{eq:Blueprint2}
&\int_{\mathcal{Q}} \, c_0 \, \partial_t p \ (q_h+h_T \partial_t q_h)  \, d\p{x}dt + b \, \int_{\mathcal{Q}} \, \nabla_x \cdot \partial_t\p{u}  \ (q_h+h_T \partial_t q_h)  \, d\p{x}dt   \nonumber \\ & \hspace{0.3cm}-\int_{\partial \mathcal{Q}} \big(\p{\mathcal{K}}\nabla_x p \, (q_h + \partial_t q_h) \big) \cdot \p{n}_x \,ds+ \int_{\mathcal{Q}} \, \p{\mathcal{K}}\nabla_xp \  \nabla_x  (q_h + h_T \, \partial_tq_h)  \, d\p{x}dt  \\  &\hspace{5cm}  =\langle g  ,  q_h + h_T \partial_t q_h\rangle_{L^2(\mathcal{Q})}. \nonumber
\end{align}

\begin{remark}
	For the derivation above, the product structure of the parametrization
	and the smoothness of the isogeometric basis functions in each mesh element $\mathcal{K}$ lead to the well-definedness of the mixed derivatives $\partial_{x_i} \partial_t\p{v}_h, \ \partial_{x_i}\partial_t q_h$. The $\partial_{x_i}, \ i = 1, \dots , d$\ , denote the derivatives w.r.t. spatial coordinates.
\end{remark}

Both equations \eqref{eq:Blueprint1} and \eqref{eq:Blueprint2} are  in some sense the blueprints  for the next definition.
\begin{definition}{(Discrete variational formulation)}\\
	\label{def:discrete variational form}
	Find ${u}_h \in \p{\mathcal{V}}_{h,r_u,r_T}$ and $p_h \in \mathcal{W}_{h,r_p,r_T}$ s.t.
	\begin{align}
	\label{Biot_Weak_Eq1}
	\tilde{e}(\p{u}_h+h_T \, {\partial_t\p{u}}_h  \, , \, {\partial_t\p{v}}_h)- b\langle p_h+ h_T \,  \partial_t{p}_h  \, , \,  \nabla_x \cdot {\partial_t\p{v}}_h\rangle_{L^2(\mathcal{Q})}=l_1({\partial_t\p{v}}_h ), 
	\\ 	\label{Biot_Weak_Eq2}
	c_0 \langle \partial_t{p}_h \, , \,  q_h+ h_T \, \partial_t{q}_h \rangle_{L^2(\mathcal{Q})}+ b\langle \nabla_x \cdot {\partial_t\p{u}}_h  \, , \, q_h+ h_T \, \partial_t{q}_h \rangle_{L^2(\mathcal{Q})} + \tilde{a}(p_h \, ,\,  q_h+ h_T \, \partial_t{q}_h )\nonumber \\ =  l_2 (q_h+ h_T \, \partial_t{q}_h) ,
	\end{align}
	\normalsize
	for all $ {v}_h \in \p{{\mathcal{V}}}_{h,r_u,r_T}$ and $q_h\in {\mathcal{W}}_{h,r_p,r_T}$, \vspace{0.1cm} \\
	\begin{minipage}{3cm}
		with linear  forms
	\end{minipage}
	\begin{minipage}{12.7cm}
		$\begin{cases}
		l_1(\p{v}) \coloneqq \langle \p{f} + h_T  \partial_t\p{f} \, , \, \p{v} \rangle_{\p{L}^2(\mathcal{Q})}+ \langle \p{t}_n + h_T  \partial_t\p{t}_n \, , \, \p{v} \rangle_{\p{L}^2(\Sigma_t)},\vspace{0.1 cm} \\
		l_2(q ) \coloneqq \langle  g  \, , \, q \rangle_{L^2(\mathcal{Q})} +  \langle  v_f  \, , \, q \rangle_{L^2(\Sigma_f)}, 
		\end{cases}$
	\end{minipage}
    \vspace{0.2cm}\\
	\begin{minipage}{3cm}
		and bilinear forms
	\end{minipage}
	\begin{minipage}{9.5cm}
		$\begin{cases}
		\tilde{e}(\p{u},\p{v}) \coloneqq  \int_{\mathcal{Q}} C_{ijkl} {\varepsilon}_{kl}(\p{u}) {\varepsilon}_{ij}(\p{v}) \ d\p{x}dt,  \vspace{0.1 cm} \\
		\tilde{a}(p,q) \coloneqq   \langle \p{\mathcal{K}}\nabla_x p \, , \, \nabla_xq \rangle_{\p{L}^2(\mathcal{Q})} . 
		\end{cases}$
	\end{minipage}
\end{definition} 

For later considerations, we use instead of \eqref{Biot_Weak_Eq1}-\eqref{Biot_Weak_Eq2}  the equivalent formulation 
\begin{align}
\label{disctre_variational_problem_short}
\textup{Find}  \ \p{u}_h, p_h  \ \ \textup{s.t.}  \ \  b_{ST}([\p{u}_h,&p_h],[\p{v}_h,q_h])=l([\p{v}_h,q_h]), \ \  \forall \  \p{v}_h \in \p{\mathcal{V}}_{h,r_u,r_T},  \ q_h \in \mathcal{W}_{h,r_p,r_T},  \\ \textup{with}  \hspace{0.3cm}
\label{eq:discrete_bilinear}
b_{ST}([\p{u},p],[\p{v},q]) &\coloneqq 	\tilde{e}(\p{u}+h_T \, {\partial_t\p{u}}  \, , \, {\partial_t\p{v}})- b\langle p+ h_T \,  \partial_t{p}  \, , \,  \nabla_x \cdot {\partial_t\p{v}}\rangle_{L^2(\mathcal{Q})}  \\ & \hspace{0.15cm}+ 	c_0 \langle \partial_t{p} \, , \,  q+ h_T \, \partial_t{q} \rangle_{L^2(\mathcal{Q})}+ b\langle \nabla_x \cdot {\partial_t\p{u}}  \, , \, q+ h_T \, \partial_t{q} \rangle_{L^2(\mathcal{Q})} \nonumber \\ & \hspace{0.35cm} + \tilde{a}(p \, ,\,  q+ h_T \, \partial_t{q}), \nonumber \\
l([\p{v},q]) &\coloneqq l_1({\partial_t\p{v}} ) +  l_2 (q+ h_T \, \partial_t{q}). \nonumber
\end{align} 
The discrete formulation is consistent in the following way.
\begin{lemma}
	\label{lemma:consistency}
	Assume that there exists a solution to \eqref{Eq1}-\eqref{Eq2} satisfying  $\p{u} \in \p{H}^3(\mathcal{Q})$ and $p \in H^2(\mathcal{Q})$ and let $v_f \in L^2(\Sigma_f)$ and $\p{t}_n \in \p{L}^2(\Sigma_t)$ the restriction of a function in $\p{t}_e \in \p{H}^1(\partial\mathcal{Q})$. Moreover, let $\p{f}\in \p{H}^1(\mathcal{Q}), \ g \in L^2(\mathcal{Q})$.\\ Then  $b_{ST}([\p{u},p],[\p{v}_h,q_h])=l([\p{v}_h,q_h]) \ \ \textup{for all} \  \p{v}_h \in \p{\mathcal{V}}_{h,r_u,r_T}, \, q_h \in \mathcal{W}_{h,r_p,r_T}$. 
\end{lemma}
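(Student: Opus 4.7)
The plan is to run the derivation that produced equations \eqref{eq:Blueprint1}--\eqref{eq:Blueprint2} in reverse. Since the hypotheses give $\p{u} \in \p{H}^3(\mathcal{Q})$ and $p \in H^2(\mathcal{Q})$, equation \eqref{Eq1} holds in $\p{L}^2(\mathcal{Q})$ and may be differentiated in time to obtain \eqref{eq:Biot_1_dt} in $\p{L}^2(\mathcal{Q})$. Hence the whole formal manipulation of the introduction to Definition \ref{def:discrete variational form} is mathematically legitimate. Fixing an arbitrary test pair $(\p{v}_h,q_h) \in \p{\mathcal{V}}_{h,r_u,r_T} \times \mathcal{W}_{h,r_p,r_T}$, I would multiply the combination $\eqref{Eq1}+h_T\cdot\eqref{eq:Biot_1_dt}$ by $\partial_t\p{v}_h$ and integrate over $\mathcal{Q}$, and in parallel multiply \eqref{Eq2} by the time-upwinded test $q_h+h_T\partial_t q_h$ and integrate.

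Next I would perform integration by parts in the spatial variables on both identities. Because the NURBS parametrization is of tensor-product form and the basis is smooth within each mesh element, the mixed derivatives $\partial_{x_i}\partial_t\p{v}_h$ and $\partial_{x_i}\partial_tq_h$ required by the volume terms are well-defined (cf. the remark preceding the definition), and classical distributional integration by parts applies globally because the test functions are $C^0$ across interior element faces. The result of this step is exactly \eqref{eq:Blueprint1} for the displacement and \eqref{eq:Blueprint2} for the pressure, but with the continuous solution $(\p{u},p)$ playing the role of $(\p{u}_h,p_h)$.

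It then remains to match the boundary terms with $l_1$ and $l_2$. The lateral boundary $\partial\Omega\times(0,T)$ splits as $\Sigma_u\cup\Sigma_t$ and $\Sigma_p\cup\Sigma_f$ (no contributions arise from $\Sigma_0$ or $\Omega\times\{T\}$ because the integration by parts is purely spatial). On $\Sigma_u$ we have $\p{v}_h\equiv 0$, and since $\Sigma_u=\Gamma_u\times(0,T)$ this implies $\partial_t\p{v}_h\equiv 0$ on $\Sigma_u$; on $\Sigma_p$ the analogous argument kills both $q_h$ and $\partial_tq_h$. On $\Sigma_t$ and $\Sigma_f$ I would substitute the Neumann data from \eqref{init_bound_con_1} and \eqref{init_bound_con_2}, namely $\p{\tilde\sigma}\cdot\p{n}_x=\p{t}_n$ and $\p{\mathcal{K}}\nabla_xp\cdot\p{n}_x=v_f$. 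The time derivative of the first boundary identity is meaningful precisely because $\p{t}_n$ is assumed to extend to $\p{t}_e\in\p{H}^1(\partial\mathcal{Q})$, so $\partial_t\p{\tilde\sigma}\cdot\p{n}_x=\partial_t\p{t}_n$ as a distribution on $\Sigma_t$. Inserting these identifications, the boundary contributions in \eqref{eq:Blueprint1}--\eqref{eq:Blueprint2} assemble into the surface terms defining $l_1(\partial_t\p{v}_h)$ and $l_2(q_h+h_T\partial_tq_h)$, while the volume terms add up to $b_{ST}([\p{u},p],[\p{v}_h,q_h])$. Summing the two identities yields the claim.

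The one genuinely non-cosmetic point is justifying $\partial_t\p{t}_n$ as a bona fide $L^2$-object on $\Sigma_t$; this is the reason for the global $\p{H}^1(\partial\mathcal{Q})$ assumption, and once it is in place the rest of the argument is a careful bookkeeping of which test quantity is paired with which equation.
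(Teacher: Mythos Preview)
Your proposal is correct and follows essentially the same route as the paper: derive \eqref{eq:Blueprint1}--\eqref{eq:Blueprint2} for the exact solution, then use the boundary conditions to identify the surface integrals with the linear forms $l_1,l_2$. The only point the paper adds that you leave implicit is the trace argument on the solution side: since $\mathcal{Q}$ is Lipschitz, the trace operator maps $H^2(\mathcal{Q})$ continuously into $H^1(\partial\mathcal{Q})$, so $\p{\tilde\sigma}\in\p{H}^2(\mathcal{Q})$ has a boundary trace in $\p{H}^1(\partial\mathcal{Q})$ and the quantity $\partial_t(\p{\tilde\sigma}\cdot\p{n}_x)$ appearing in \eqref{eq:Blueprint1} is a legitimate $\p{L}^2$ object on $\Sigma_t$; this is the companion to your observation about $\p{t}_e\in\p{H}^1(\partial\mathcal{Q})$.
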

\begin{proof}
	This is clear due to the equations \eqref{eq:Blueprint1} and \eqref{eq:Blueprint2}, the assumed boundary conditions and the fact that for the Lipschitz domain $\mathcal{Q}$ the trace operator restricted to $H^2(\mathcal{Q})$ defines a linear and continuous operator $\gamma_0 \colon H^2(\mathcal{Q}) \rightarrow H^1(\partial\mathcal{Q})$; see, e.g., \cite{geymonat}.
\end{proof}

Note the well-definedness of the terms on the right-hand side of \eqref{eq:discrete_bilinear} if   $\p{u}, \, \p{v} \in \p{\mathcal{V}}_0$ and $p,q \in \mathcal{W}_0$, where
\begin{align*}
\p{\mathcal{V}}_0  &\coloneqq \{ \p{v}=(v_1, \dots ,v_d) \in \p{H}^1(\mathcal{Q})\ | \    \nabla_x\partial_t v_i \in \p{L}^2(\mathcal{Q}), \  v_i=0 \, \textup{ on } \ \Sigma_u \cup \Sigma_0 , \ \forall \, i \}, \vspace{0.001cm} \nonumber \\  \mathcal{W}_0 &\coloneqq \{ q \in H^1(\mathcal{Q}) \ |  \  \nabla_x\partial_t q \in \p{L}^2(\mathcal{Q}), \ q=0 \ \textup{on} \ \Sigma_p \cup \Sigma_0 \}.
\end{align*}

\subsection{Existence of solutions}
\label{subsec:existence_discrete_solution}
Next we check if there exists a solution to the discrete problem. 
For this purpose we prove   the coercivity of the bilinear form $b_{ST}(\cdot , \cdot )$ w.r.t. to the space $\p{\mathcal{V}}_{h,r_u,r_T} \times \mathcal{W}_{h,r_p,r_T}$ endowed with the auxiliary norm 
\begin{align}
\label{hnorm}
&\norm{[\p{v},q]}_h^2 \coloneqq h_T \norm{\partial_t\p{v}}_{\p{\mathcal{H}}^1(\mathcal{Q})}^2 +\norm{\p{v}}_{\p{H}^1(\Sigma_T)}^2+h_T  c_0  \norm{\partial_t q}_{L^2(\mathcal{Q})}^2 \\ & \hspace{5cm} +c_0  \norm{q}_{L^2(\Sigma_T)}^2+\norm{\nabla_xq}_{\p{L}^2(\mathcal{Q})}^2,    \hspace{0.5cm}  \textup{where} \nonumber \\ & \norm{\p{v}}_{\p{\mathcal{H}}^1(\mathcal{Q})}^2\coloneqq \int_{\mathcal{Q}} \sum_{i,j} \big( \partial_{x_i} v_j \big)^2 \, d\p{x}dt+ \norm{\p{v}}_{\p{L}^2(\mathcal{Q})}^2, \ \ \p{v}=(v_1,\dots,v_d)   \   \ \textup{and} \  \ \Sigma_T = \Omega \times \{T \}.\nonumber
\end{align}
Exploiting the continuity, the piecewise smoothness and the boundary conditions of the test functions one can check easily that $\norm{[\cdot,\cdot]}_h$ is indeed a norm in $\p{\mathcal{V}}_{h,r_u,r_T} \times \mathcal{W}_{h,r_p,r_T}$. Before we prove the coercivity we insert here two auxiliary results. 
\begin{lemma}
	\label{theorem Korn}
	Let the elasticity tensor $\p{C}$ satisfy $$ \mu(\p{C}) \ \sum_{i,j} \, x_{ij}^2 \leq \sum_{i,j,k,l}C_{ijkl} \, x_{ij}{x}_{kl}$$ for all $x_{ij}, \ x_{kl} \in \mathbb{R}, \ i,j,k,l \in \{1,\dots,d\}$ and some constant $0<\mu(\p{C})$. 
	Then the bilinear  form  
	\begin{align*}
	e(\cdot,\cdot) \colon \p{V} \times \p{V} \rightarrow \mathbb{R} \ , \ (\p{u},\p{v}) \mapsto   \,\int_{\Omega}  C_{ijkl} {\varepsilon}_{kl}(\p{u}) {\varepsilon}_{ij}(\p{v})  \, d\p{x}
	\end{align*} is symmetric and coercive w.r.t. the $\p{H}^1$-norm in  $\p{V} \coloneqq \{ \p{v} \in \p{H}^1(\Omega) \ | \ \p{v}=0 \ \textup{on} \ \Gamma_u \}$.
	Thus there exists a constant $0<c_{e}$ such that 
	\begin{align}
	\label{eq:Korn_inequality}
	e(\p{v},\p{v}) \geq c_{e} \norm{\p{v}}_{\p{H}^1(\Omega)}^2, \ \ \ \ \forall \ \p{v} \in \p{V}.
	\end{align}
	The constant depends only on $\Omega, \ \Gamma_u $ and $\p{C}$.
\end{lemma}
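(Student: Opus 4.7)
The plan is to dispatch symmetry separately and then establish coercivity in two stages. Symmetry of $e$ is immediate from the major symmetry $C_{ijkl}=C_{klij}$ of the elasticity tensor: swapping $\p{u}\leftrightarrow\p{v}$ in the integrand only relabels the four summation indices. For coercivity I would combine a pointwise algebraic estimate coming from the assumed positivity of $\p{C}$ with the second Korn inequality that converts control of the symmetric gradient into control of the full $\p{H}^1$-norm.

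First I would apply the ellipticity hypothesis of the lemma pointwise with $x_{ij}=\varepsilon_{ij}(\p{v})(\p{x})$, which gives
\[
C_{ijkl}\,\varepsilon_{ij}(\p{v})\,\varepsilon_{kl}(\p{v}) \;\geq\; \mu(\p{C})\,\sum_{i,j}\varepsilon_{ij}(\p{v})^2
\]
a.e.\ in $\Omega$. Integrating produces $e(\p{v},\p{v})\geq \mu(\p{C})\,\norm{\p{\varepsilon}(\p{v})}_{\p{L}^2(\Omega)}^2$, so the coercivity question reduces to bounding $\norm{\p{v}}_{\p{H}^1(\Omega)}$ by $\norm{\p{\varepsilon}(\p{v})}_{\p{L}^2(\Omega)}$ on $\p{V}$.

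The hard part is this remaining step, since the kernel of the map $\p{v}\mapsto\p{\varepsilon}(\p{v})$ consists of the infinitesimal rigid motions and is therefore non-trivial on $\p{H}^1(\Omega)$; any bound of the desired form must rely on the boundary condition to kill this finite-dimensional kernel. I would invoke the second Korn inequality: because $\Gamma_u$ has strictly positive surface measure by Assumption~1, a standard compactness-plus-contradiction argument (see e.g.\ Ciarlet's monograph on mathematical elasticity, or Duvaut--Lions) yields a constant $C_K>0$ depending only on $\Omega$ and $\Gamma_u$ with
\[
\norm{\p{\varepsilon}(\p{v})}_{\p{L}^2(\Omega)}^2 \;\geq\; C_K\,\norm{\p{v}}_{\p{H}^1(\Omega)}^2,\qquad \forall\,\p{v}\in\p{V}.
\]
Combining the two estimates gives \eqref{eq:Korn_inequality} with $c_e=\mu(\p{C})\,C_K$, and the stated dependence on $\Omega$, $\Gamma_u$ and $\p{C}$ is transparent from this construction. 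No additional assumptions on the elasticity tensor beyond the one stated are needed, and the argument is purely spatial, so it will transfer verbatim to the time-integrated form $\tilde{e}$ used in Definition~\ref{def:discrete variational form} by simply integrating the inequality over $(0,T)$.
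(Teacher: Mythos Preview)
Your argument is correct and matches the paper's approach: both combine the pointwise ellipticity of $\p{C}$ with Korn's second inequality (the paper citing Corollary~5.9 of Alessandrini et al.\ for $d=2,3$ and reducing $d=1$ to the Poincar\'{e} inequality), and both dispatch symmetry via the symmetries of the elasticity tensor. You spell out the two-step structure more explicitly than the paper's terse citation, but the mathematical content is identical.
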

\begin{proof}
	The inequality \eqref{eq:Korn_inequality} follows  by  \cite{Alessandrini2006TheLC}, Corollary 5.9, for both cases $d=2,3$. The one-dimensional  case is a consequence of  the  Poincar\'{e} inequality; see Example 3  in \cite{grser2015note}. And the symmetry of $e(\cdot ,\cdot)$ is clear due to the symmetry properties of the elasticity tensor.
\end{proof}

\begin{lemma}
	\label{ses_a}
	There exist constants $0<c_a,C_a<\infty$ which only depend on $\p{\mathcal{K}}, \ \Gamma_p$ and $\Omega$ such that
	\begin{align*}
	a(q,q) \geq c_a \, \norm{q}_{H^1(\Omega)}^2 \ \ \ \textup{and} \ \ \ |a(p,q)| \leq  C_a \, \norm{p}_{H^1(\Omega)} \norm{q}_{H^1(\Omega)},  \\  \  \textup{for all} \  \ \ p, \, q \in W \coloneqq \{ w \in H^1(\Omega) \ | \ w = 0 \ \textup{on} \  \Gamma_p \} \ \ 	\textup{and with} \nonumber \\
	\ \ \ a \colon W \times W \rightarrow \mathbb{R} \ , \  (p,q) \mapsto  \langle \p{\mathcal{K}}\nabla_x p \, , \, \nabla_xq \rangle_{\p{L}^2(\Omega)}.   \ \ \ \  \ \ \ \ \ \ \ \nonumber
	\end{align*}
\end{lemma}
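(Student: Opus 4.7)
The plan is to reduce both estimates to standard facts about the weighted gradient bilinear form, with the Poincaré--Friedrichs inequality on $W$ being the decisive ingredient.

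First I would treat continuity. By Assumption~1 the tensor $\p{\mathcal{K}}$ is bounded on $\Omega$, so there exists a constant $M_{\mathcal{K}}$ with $|\p{\mathcal{K}}(\p{x})\p{\xi}\cdot\p{\eta}|\leq M_{\mathcal{K}}|\p{\xi}||\p{\eta}|$ pointwise. Cauchy--Schwarz in $\p{L}^2(\Omega)$ then gives
\[
|a(p,q)| \leq M_{\mathcal{K}}\,\norm{\nabla_x p}_{\p{L}^2(\Omega)}\,\norm{\nabla_x q}_{\p{L}^2(\Omega)} \leq M_{\mathcal{K}}\,\norm{p}_{H^1(\Omega)}\norm{q}_{H^1(\Omega)},
\]
so one can take $C_a = M_{\mathcal{K}}$, depending only on $\p{\mathcal{K}}$.

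For coercivity, uniform ellipticity of $\p{\mathcal{K}}$ provides a constant $\alpha>0$ with $\p{\mathcal{K}}(\p{x})\p{\xi}\cdot\p{\xi}\geq \alpha|\p{\xi}|^2$ for all $\p{\xi}\in\mathbb{R}^d$ and a.e.\ $\p{x}\in\Omega$, which yields
\[
a(q,q) \geq \alpha\,\norm{\nabla_x q}_{\p{L}^2(\Omega)}^2 \qquad\forall\, q\in W.
\]
To upgrade this to a full $H^1$-coercivity estimate I would invoke the Poincaré--Friedrichs inequality on $W$: since $\Omega$ is a connected bounded Lipschitz domain and $\Gamma_p\subset\partial\Omega$ has positive surface measure (Assumption~1), there is a constant $C_P=C_P(\Omega,\Gamma_p)$ such that $\norm{q}_{L^2(\Omega)}\leq C_P\,\norm{\nabla_x q}_{\p{L}^2(\Omega)}$ for every $q\in W$. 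Combining,
\[
\norm{q}_{H^1(\Omega)}^2 = \norm{q}_{L^2(\Omega)}^2 + \norm{\nabla_x q}_{\p{L}^2(\Omega)}^2 \leq (1+C_P^2)\,\norm{\nabla_x q}_{\p{L}^2(\Omega)}^2,
\]
and therefore $a(q,q)\geq c_a\,\norm{q}_{H^1(\Omega)}^2$ with $c_a = \alpha/(1+C_P^2)$, which depends only on $\p{\mathcal{K}}$, $\Omega$ and $\Gamma_p$, as claimed.

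Essentially no step is delicate; the only point that requires care is the Poincaré--Friedrichs inequality, which is non-trivial precisely because one vanishes only on a proper subset $\Gamma_p$ of $\partial\Omega$. I would cite a standard reference (e.g.\ the Deny--Lions / Nečas trace-type argument for Lipschitz domains) rather than reproving it, so the lemma follows directly by assembling the three ingredients above.
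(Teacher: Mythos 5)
Your proposal is correct and matches the paper's argument, which likewise invokes only the uniform ellipticity and boundedness of $\p{\mathcal{K}}$ together with the Poincar\'{e} inequality on $W$ (vanishing trace on $\Gamma_p$ of positive measure); you have simply written out the details the paper leaves implicit. No further changes are needed.
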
 
\begin{proof}
	An application of the  Poincar\'{e} inequality and the assumption that the $ \p{\mathcal{K}}(\p{x}) $   are uniformly elliptic and bounded symmetric positive definite matrices give the assertion.
\end{proof}

\begin{remark}
	\label{remark:derivatives_of _test_func}
	In the context of the space-time discretization we interpret the derivatives $\partial_{x_i}, \ \partial_t$ etc., as weak derivatives w.r.t. to the domain $\mathcal{Q}$. Nevertheless due to the piecewise smoothness of the test functions and their continuity, we obtain their weak derivatives as piecewise defined classical derivatives and in particular one can assume  for $s \in [0,T]$ that $\p{v}_h(\cdot , s) \in \p{V}$ and $q_h(\cdot , s) \in W$. The product structure of the parametrization  and hence of the test spaces further gives us  $\partial_t\p{v}_h(\cdot , s) \in \p{V}$ and $\partial_tq_h(\cdot , s) \in W$; see \eqref{eq : product spaces}.
\end{remark}

Now we arrive at the mentioned coercivity result.
\begin{lemma}
	\label{lemma:coercivity}
	The bilinear form $b_{ST}$, defined  by \eqref{eq:discrete_bilinear}, is coercive in the sense that there exists a constant $0 < \mu_c$ independent from the mesh sizes such that
	\begin{align*}
	\mu_c \, \norm{[\p{u}_h,p_h]}_h^2 \leq b_{ST}([\p{u}_h,p_h],[\p{u}_h,p_h]), \ \ \forall \ [\p{u}_h,p_h] \in  \p{\mathcal{V}}_{h,r_u,r_T} \times \mathcal{W}_{h,r_p,r_T}.
	\end{align*}
	The constant $\mu_c$ can be chosen independently from $c_0$.
\end{lemma}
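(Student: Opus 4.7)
The plan is to test the bilinear form against the solution itself, i.e. compute $b_{ST}([\p{u}_h,p_h],[\p{u}_h,p_h])$, and show that every term is either non-negative or already appears (with a controllable constant) in $\|[\p{u}_h,p_h]\|_h^2$. The crucial structural observation is that the off-diagonal displacement/pressure coupling terms cancel: the term $-b\langle p_h+h_T\partial_t p_h,\nabla_x\cdot\partial_t\p{u}_h\rangle_{L^2(\mathcal{Q})}$ arising from \eqref{eq:discrete_bilinear} is exactly the negative of $b\langle\nabla_x\cdot\partial_t\p{u}_h,p_h+h_T\partial_t p_h\rangle_{L^2(\mathcal{Q})}$. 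This is precisely why the sign choice in the test function $\partial_t \p{v}_h$ was made in the first place.

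After this cancellation three blocks remain. First, the elastic block splits as $\tilde{e}(\p{u}_h,\partial_t\p{u}_h)+h_T\tilde{e}(\partial_t\p{u}_h,\partial_t\p{u}_h)$. Using the symmetry of $\p{C}$ together with the essential initial condition $\p{u}_h(\cdot,0)=\p{0}$ baked into $\p{\mathcal{V}}_{h,r_u,r_T}$, the first summand rewrites as $\tfrac12 e(\p{u}_h(\cdot,T),\p{u}_h(\cdot,T))$, which is bounded below by $\tfrac{c_e}{2}\|\p{u}_h\|_{\p{H}^1(\Sigma_T)}^2$ via Lemma \ref{theorem Korn} applied at $t=T$ (using Remark \ref{remark:derivatives_of _test_func} to justify that the trace lies in $\p{V}$). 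The second summand is estimated pointwise in time by the same Korn estimate, yielding $h_T c_e\|\partial_t\p{u}_h\|_{\p{\mathcal{H}}^1(\mathcal{Q})}^2$. Second, the storage block $c_0\langle\partial_t p_h,p_h+h_T\partial_t p_h\rangle_{L^2(\mathcal{Q})}$ splits analogously; the mixed part becomes $\tfrac{c_0}{2}\|p_h\|_{L^2(\Sigma_T)}^2$ using $p_h(\cdot,0)=0$, and the remainder is already $c_0 h_T\|\partial_t p_h\|_{L^2(\mathcal{Q})}^2$. Third, $\tilde{a}(p_h,p_h+h_T\partial_t p_h)$: the diagonal part $\tilde{a}(p_h,p_h)$ is $\geq c_{\mathcal{K}}\|\nabla_x p_h\|_{\p{L}^2(\mathcal{Q})}^2$ by uniform ellipticity, and the mixed part $h_T\tilde{a}(p_h,\partial_t p_h)$ is $\tfrac{h_T}{2}a(p_h(\cdot,T),p_h(\cdot,T))\geq 0$ by the symmetry of $\p{\mathcal{K}}$ and $p_h(\cdot,0)=0$, so it can simply be discarded.

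Collecting the five surviving contributions gives a lower bound of the form $\alpha_1\|\p{u}_h\|_{\p{H}^1(\Sigma_T)}^2+\alpha_2 h_T\|\partial_t\p{u}_h\|_{\p{\mathcal{H}}^1(\mathcal{Q})}^2+\alpha_3 c_0\|p_h\|_{L^2(\Sigma_T)}^2+\alpha_4 c_0 h_T\|\partial_t p_h\|_{L^2(\mathcal{Q})}^2+\alpha_5\|\nabla_x p_h\|_{\p{L}^2(\mathcal{Q})}^2$, with each $\alpha_i$ depending only on $c_e$, $c_{\mathcal{K}}$ and absolute numerical constants. Crucially, the factor $c_0$ sits symmetrically on both sides of every inequality in which it appears, so taking $\mu_c$ as the minimum of the $\alpha_i$ yields the stated $c_0$-independent constant and completes the proof.

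The main obstacle I anticipate is nothing deep but rather bookkeeping: one must be careful that the two Gronwall-style terms $\tilde{e}(\p{u}_h,\partial_t\p{u}_h)$ and $\langle\partial_t p_h,p_h\rangle$ genuinely reduce to a non-negative trace at $t=T$ rather than leaving a residue at $t=0$, which relies entirely on the homogeneous initial conditions being built into the test spaces $\p{\mathcal{V}}_{h,r_u,r_T}$ and $\mathcal{W}_{h,r_p,r_T}$ through the condition on $\Sigma_0$, and on the symmetry of $\p{C}$ and $\p{\mathcal{K}}$ needed to invoke the fundamental theorem of calculus in $t$.
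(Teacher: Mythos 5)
Your proposal is correct and follows essentially the same route as the paper's proof: cancellation of the $b$-coupling terms, rewriting $\tilde{e}(\p{u}_h,\partial_t\p{u}_h)$, $c_0\langle\partial_t p_h,p_h\rangle$ and $h_T\tilde{a}(p_h,\partial_t p_h)$ as non-negative traces at $t=T$ via symmetry and the homogeneous initial conditions, Korn-type coercivity (Lemma \ref{theorem Korn}) applied pointwise in time for the remaining elastic term, and taking $\mu_c$ as a minimum of $c_0$-free constants. The only cosmetic difference is that you invoke uniform ellipticity of $\p{\mathcal{K}}$ directly for $\tilde{a}(p_h,p_h)$ where the paper cites Lemma \ref{ses_a}, which changes nothing of substance.
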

\begin{proof}
	For reasons of clarity we estimate  different terms in the definition of $b_{ST}$, i.e., \eqref{eq:discrete_bilinear}, separately, starting with the non-mixed terms in which either only  $\p{u}$ and $\p{v}$ or only $p, \ q$ occur.
	\begin{itemize}
		\item Consider the $\tilde{e}(\cdot, \cdot)$ term and note  that $\p{u}_h(\cdot,t=0)=\p{0}$. Observe the symmetry of the bilinear form $\tilde{e}(\cdot,\cdot),$ that is obvious from the symmetry of $e(\cdot,\cdot)$ ; see Lemma \ref{theorem Korn}. By means of these properties and Green's formula  we can write 
		\begin{align*}
		\tilde{e}(\p{u}_h,\partial_t\p{u}_h) &=  \int_{\mathcal{Q}} C_{ijkl} {\varepsilon}_{kl}(\p{u}_h) {\varepsilon}_{ij}(\partial_t\p{u}_h)d\p{x} dt  = \\
		&= \frac{1}{2} \int_{\mathcal{Q}}  \partial_t \big(C_{ijkl} {\varepsilon}_{kl}(\p{u}_h) {\varepsilon}_{ij}(\p{u}_h) \big) \, dt d\p{x} \\ &= \frac{1}{2}  \int_{\partial \mathcal{Q}}  C_{ijkl} {\varepsilon}_{kl}(\p{u}_h) {\varepsilon}_{ij}(\p{u}_h) \cdot n_t \, ds \\
		&= \frac{1}{2}  \,\int_{L^2(\Sigma_T)}  C_{ijkl} {\varepsilon}_{kl}(\p{u}_h) {\varepsilon}_{ij}(\p{u}_h)  \, d\p{x} \\ 
		& = \frac{1}{2} \, e(\p{u}_h(\cdot ,T),\p{u}_h( \cdot ,T)) \geq \frac{c_e}{2 } \, \norm{\p{u}_h(\cdot, T)}_{\p{H}^1(\Omega)}^2. 
		\end{align*}
		We used here the piecewise smoothness of the test functions, i.e., $\p{u}_h(\cdot ,t) \in \p{V}$, and the coercivity of the elasticity form $e(\cdot,\cdot)$ (Lemma \ref{theorem Korn} ) with $(\p{n}_x,n_t)$ denoting the outer unit normal vector of the space-time domain.
		\item In view of  Remark \ref{remark:derivatives_of _test_func} and the coercivity of $e(\cdot, \cdot)$  it is
		\begin{align*}
		h_T \, \tilde{e}(\partial_t\p{u}_h,\partial_t\p{u}_h) &= h_T \, \int_{\mathcal{Q}} C_{ijkl} {\varepsilon}_{kl}(\partial_t\p{u}_h) {\varepsilon}_{ij}(\partial_t\p{u}_h)d\p{x}  dt \\
		& =  h_T \, \int_I {e}(\partial_t\p{u}_h(\cdot ,t) , \partial_t\p{u}_h(\cdot ,t)) \, dt \\ & \geq h_T \, \int_I \, c_e \norm{ \partial_t \p{u}_h(\cdot ,t)}_{\p{H}^1(\Omega)}^2 \, dt \\
		& = h_T \, c_e \, \int_I \int_{\Omega} \sum_{i,j} (\partial_{x_i}\partial_t u_{h,j} )^2+ \sum_j (\partial_t u_{h,j})^2 \, d\p{x} dt \\ 
		&=h_T \, c_e \,  \norm{\partial_t\p{u}_h}_{\p{\mathcal{H}}^1(\mathcal{Q})}^2.
		\end{align*}		
		One notices Fubini's theorem and the notation $u_{h,i}$ for the i-th component of $\p{u}_h$.
		\item By the chain rule and the zero initial conditions we get
		\begin{align*}
		\langle c_0 \, \partial_t p_h, p_h \rangle_{L^2(\mathcal{Q})} &= \int_{\mathcal{Q}}  \, \frac{c_0}{2 } \partial_t \big( p_h^2 \big) dt d\p{x}= \frac{c_0}{2 } \int_{\partial \mathcal{Q}} \, p_h^2 \cdot n_t \, ds \\
		& =\frac{c_0}{2 } \, \norm{p_h}_{L^2(\Sigma_T)}^2.
		\end{align*}
		\item  Obviously,
		\begin{align*}
		h_T \, \langle c_0 \, \partial_t p_h , \partial_t p_h \rangle_{L^2(\mathcal{Q})} = h_T \,  c_0 \, \norm{\partial_t p_h}_{L^2(\mathcal{Q})}^2.
		\end{align*}
		\item Moreover,  with Remark \ \ref{remark:derivatives_of _test_func} as well as with Lemma \ref{ses_a} and Fubini's theorem one can estimate
		\begin{align*}
		\tilde{a} ( p_h,p_h) = \int_{\Omega}  \int_I \, \p{\mathcal{K}} \nabla_x p_h \  \nabla_x  p_h  \, dtd\p{x} =   \int_I \,a(p_h(\cdot , t),p_h(\cdot , t))  \, dt  \\ 
		\geq c_a \norm{\nabla_xp_h}_{\p{L}^2(\mathcal{Q})}^2.
		\end{align*}
		\item Finally the last non-mixed term yields by the symmetry of $a(\cdot , \cdot)$:
		\begin{align*}
		h_T \, \tilde{a} (p_h,\partial_t p_h)&=h_T \, \int_I  a(p_h(\cdot , t),\partial_t p_h(\cdot , t)) \, dt
		\\ & =   \frac{h_T}{2}\int_{I} \partial_t a(p_h(\cdot , t),p_h(\cdot , t)) \, dt  = \frac{h_T}{2} \,  a(p_h(\cdot ,T),p_h(\cdot ,T)) \geq 0  .
		\end{align*}
		
	\end{itemize}
	For the first, third and last point above, we used the assumption  that $p_h=0, \p{u}_h=0$ on $\Sigma_0$. 
	Next we sum up all the remaining terms in the definition of $b_{ST}([\p{u}_h,p_h],[\p{u}_h,p_h])$, i.e. in the sum of the right-hand side of \eqref{eq:discrete_bilinear}. We get
	\begin{align*}
	& b \ \big[- \langle p_h,\nabla_x \cdot \partial_t \p{u}_h\rangle_{L^2(\mathcal{Q})}- h_T \, \langle \partial_t p_h,\nabla_x \cdot \partial_t\p{u}_h\rangle_{L^2(\mathcal{Q})} \\ 
	&\hspace{2cm}+ \langle \nabla_x \cdot \partial_t\p{u}_h , p_h \rangle_{L^2(\mathcal{Q})} + h_T \, \langle\nabla_x \cdot \partial_t\p{u}_h , \partial_t p_h \rangle_{L^2(\mathcal{Q})} \big]=0.
	\end{align*}
	Thus the mixed terms vanish. So it is obvious by the above estimates that
	\begin{align*}
	b_{ST}([\p{u}_h,p_h],[\p{u}_h,p_h])&  
	\geq  \mu_c \, \Big( \norm{\p{u}_h}_{\p{H}^1(\Sigma_T)}^2 + h_T \,  \norm{ \partial_t \p{u}_h}_{\p{\mathcal{H}}^1(\mathcal{Q})}^2  
	 +c_0 \, \norm{p_h}_{L^2(\Sigma_T)}^2  \\ & \hspace{1.8cm} + h_T \, c_0 \,  \norm{\partial_t p_h}_{L^2(\mathcal{Q})}^2+ \norm{\nabla_xp_h}_{\p{L}^2(\mathcal{Q})}^2\Big)
	\\
	&= \mu_c  \, \norm{[\p{u}_h,p_h]}_h^2,
	\end{align*}
	for $\mu_c \coloneqq \min\{ \frac{c_e}{2} , c_a, \frac{1}{2} \}.$
\end{proof}

If we now choose  bases $\{ \psi_i \ | \ i=1,\dots , N^p_h \}$ and  $\{ \p{\phi}_j \ | \ j=1,\dots , N^{u}_h \}$ of the test spaces $\mathcal{W}_{h,r_p,r_T}$, $\p{\mathcal{V}}_{h,r_u,r_T}$ respectively. Then the  coefficient vectors $P^h \coloneqq (\hat{p}_{1}, \dots, \hat{p}_{N^p_h})$, $U^h \coloneqq (\hat{u}_{1}, \dots  ,\hat{u}_{N^{u}_h})$, with 
\begin{align*}
&\p{u} \approx \p{u}_h \coloneqq \sum_{j=1}^{N^{u}_h} \hat{u}_{j} \, \p{\phi}_j,  &p \approx p_h \coloneqq \sum_{i=1}^{N^p_h} \hat{p}_{i} \, \psi_i,
\end{align*}
which define the discretized solution $\p{u}_h,p_h$ are obtained by solving one  linear system of the type
\begin{align*}
S^h			
\begin{bmatrix}
U^h \\ P^h
\end{bmatrix}=\begin{bmatrix}
R^{1} \\ R^2
\end{bmatrix}.                                                                         
\end{align*} 	

The shown coercivity of the bilinear form $b_{ST}$  implies the  positive definiteness  of the  system matrix $S^h$ .
Thus the existence of a unique solution is clear. 

\begin{theorem}{\textup{\textbf{Existence of a discretized solution}}}\\
	There exists a unique solution to the variational problem \eqref{disctre_variational_problem_short}.
\end{theorem}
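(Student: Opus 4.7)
The plan is to reduce the existence and uniqueness claim to the finite-dimensional linear system associated with \eqref{disctre_variational_problem_short} and then exploit the coercivity of $b_{ST}$ established in Lemma \ref{lemma:coercivity}. Since $\p{\mathcal{V}}_{h,r_u,r_T}$ and $\mathcal{W}_{h,r_p,r_T}$ are finite-dimensional, the variational problem is equivalent to the square linear system $S^h\,[U^h;P^h]^T=[R^1;R^2]^T$ with entries of $S^h$ given by $b_{ST}(\cdot,\cdot)$ evaluated on pairs of basis functions, and right-hand side given by the linear functional $l(\cdot)$. For a square system it suffices to show injectivity, so the strategy reduces to proving that the homogeneous problem admits only the trivial solution.

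To this end, I would take an arbitrary $[\p{u}_h,p_h]\in\p{\mathcal{V}}_{h,r_u,r_T}\times\mathcal{W}_{h,r_p,r_T}$ lying in the kernel, i.e.\ $b_{ST}([\p{u}_h,p_h],[\p{v}_h,q_h])=0$ for all admissible test pairs $[\p{v}_h,q_h]$. Choosing the diagonal test $[\p{v}_h,q_h]=[\p{u}_h,p_h]$ and invoking Lemma \ref{lemma:coercivity} gives
\begin{equation*}
0 = b_{ST}([\p{u}_h,p_h],[\p{u}_h,p_h]) \geq \mu_c\,\norm{[\p{u}_h,p_h]}_h^2,
\end{equation*}
so $\norm{[\p{u}_h,p_h]}_h=0$. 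Since $\norm{\cdot}_h$ has already been argued to be a genuine norm on $\p{\mathcal{V}}_{h,r_u,r_T}\times\mathcal{W}_{h,r_p,r_T}$ (using the continuity of the discrete functions together with the vanishing boundary conditions on $\Sigma_u\cup\Sigma_0$ and $\Sigma_p\cup\Sigma_0$, which together with $\norm{\p{u}_h}_{\p{H}^1(\Sigma_T)}=0$ and $\norm{\nabla_x p_h}_{\p{L}^2(\mathcal{Q})}=0$ force $\p{u}_h\equiv\p{0}$ and $p_h\equiv 0$), this yields $\p{u}_h=\p{0}$ and $p_h=0$.

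The kernel of $S^h$ is therefore trivial, so $S^h$ is injective and, being square, invertible, which gives existence and uniqueness of the solution for any right-hand side, in particular for the load vector induced by $l(\cdot)$. There is essentially no hard step here: once coercivity in Lemma \ref{lemma:coercivity} is in hand, finite-dimensionality does the rest. The only subtlety worth a brief sentence in the write-up is verifying that $\norm{\cdot}_h$ indeed separates points on the discrete space, which relies on the homogeneous Dirichlet-type conditions on $\Sigma_u\cup\Sigma_0$ and $\Sigma_p\cup\Sigma_0$ built into the definitions of $\p{\mathcal{V}}_{h,r_u,r_T}$ and $\mathcal{W}_{h,r_p,r_T}$.
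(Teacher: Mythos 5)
Your argument is correct and follows essentially the same route as the paper: coercivity of $b_{ST}$ from Lemma \ref{lemma:coercivity} together with finite-dimensionality of the discrete spaces (equivalently, positive definiteness of the system matrix $S^h$, which is exactly the injectivity of its kernel that you spell out) yields unique solvability of \eqref{disctre_variational_problem_short}. The only difference is that you make explicit the kernel argument and the fact that $\norm{[\cdot,\cdot]}_h$ separates points on the discrete space, which the paper states without elaboration.
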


Latter theorem guarantees the well-definedness of our numerical scheme. But for a useful method a convergence statement is an important aspect, too. Consequently we face this issue in the next part.


\section{Error analysis}
\label{sec:error_analysis}
The main objective of this section is the derivation of an error estimate for the numerical approximation of 
the displacement $\p{u}$ and the pressure $p$ in the setting of Lemma \ref{lemma:consistency}. For reasons of simplification we set in the whole chapter w.l.o.g. $b=1$ and remark that $0.5 \leq b  \leq 1$ in most applications.\\
We start with a result from the IGA theory that will be used below.  

\begin{lemma}{(Inverse inequality)}	\label{inversineq_ad} \\
	Let the space-time mesh be regular with polynomial degrees  $  r_T, \,  r_S$ greater than zero. Then for $i=1, \dots,d$
	it holds  
	\begin{align}
	\label{eq1}
	h_T \, \norm{\partial_t \partial_{x_i}v_h}_{L^2(\mathcal{Q})} &\leq C_{inv,1} \, \norm{\partial_{x_i} v_h}_{L^2(\mathcal{Q})}, \\ 	\label{eq2}
	h_T \, \norm{\partial_tv_h}_{L^2(\mathcal{Q})} &\leq C_{inv,2} \, \norm{v_h}_{L^2(\mathcal{Q})}, \ \ v_h \in \mathcal{V}_{h, r_S,r_T} \cap C^0(\mathcal{Q}), 
	\end{align}
	where $C_{inv,j}$ are constants independent of the mesh sizes and $v_h$.
\end{lemma}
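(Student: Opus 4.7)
The plan is to reduce both bivariate inverse estimates to a classical one-dimensional inverse inequality on the temporal spline space, by exploiting the product structure \eqref{eq : product spaces} of the discrete space together with Fubini's theorem.

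First I would invoke the tensor-product representation $\mathcal{V}_{h,r_S,r_T}\cap C^0(\mathcal{Q}) = V_{h_S,r_S}\otimes V_{h_T,r_T}$ given by \eqref{eq : product spaces}, together with the fact that the time parametrization $\Phi(\zeta_{d+1}) = T\zeta_{d+1}$ is purely affine. This guarantees that for every fixed $x\in\Omega$ the fiber $v_h(x,\cdot)$ lies in the univariate spline space $V_{h_T,r_T}$ on $(0,T)$, whose underlying mesh is the one induced by $\Xi_{d+1}$ and has maximal cell width comparable to $h_T$ by the mesh regularity assumption. Because $\partial_{x_i}$ only acts on the spatial factor, the fiber $\partial_{x_i}v_h(x,\cdot)$ still belongs to $V_{h_T,r_T}$ for almost every $x$.

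Next I would establish the standard one-dimensional inverse inequality on $V_{h_T,r_T}$: there exists $C_0>0$, independent of $h_T$, with $\|\partial_t w\|_{L^2(0,T)} \le C_0\,h_T^{-1}\,\|w\|_{L^2(0,T)}$ for every $w\in V_{h_T,r_T}$. On each time-mesh cell $(t_j,t_{j+1})$, $w$ restricts to a polynomial of degree at most $r_T$; pulling back by the affine map $\hat t\mapsto t_j+(t_{j+1}-t_j)\hat t$ to $(0,1)$, applying the classical equivalence $\|\partial_{\hat t}\hat p\|_{L^2(0,1)} \le C(r_T)\,\|\hat p\|_{L^2(0,1)}$ on the finite-dimensional space $\mathbb{P}^{r_T}([0,1])$, and scaling back with $(t_{j+1}-t_j)\ge c\,h_T$ from mesh regularity, one obtains the cell-wise estimate; summing squared contributions over all time cells yields the global bound.

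Finally I would combine the two ingredients via Fubini's theorem. For \eqref{eq2} one writes
\begin{equation*}
\|\partial_t v_h\|_{L^2(\mathcal{Q})}^2 = \int_\Omega \|\partial_t v_h(x,\cdot)\|_{L^2(0,T)}^2\,dx \le C_0^2\,h_T^{-2}\int_\Omega \|v_h(x,\cdot)\|_{L^2(0,T)}^2\,dx = C_0^2\,h_T^{-2}\,\|v_h\|_{L^2(\mathcal{Q})}^2,
\end{equation*}
which is \eqref{eq2} with $C_{inv,2}=C_0$, and replacing $v_h$ by $\partial_{x_i} v_h$ and reusing the fiber-wise admissibility yields \eqref{eq1} in the same way. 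The only delicate point is precisely that fiber-wise assertion in the presence of NURBS weights; it is secured by the product structure \eqref{eq:tensor_structure}, because the time factor of the parametrization is linear and carries no nontrivial weight, so the NURBS push-forward factorizes cleanly into a spatial NURBS part and a temporal B-spline part. Once this is in place, the remaining steps are routine scaling arguments and I expect no further obstacle.
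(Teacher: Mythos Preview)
Your proposal is correct and follows essentially the same route as the paper: exploit the tensor-product structure so that for each fixed $\p{x}$ the temporal fiber (of $v_h$ or of $\partial_{x_i}v_h$) lies in the univariate spline space $V_{h_T,r_T}$, apply a one-dimensional inverse inequality on that space, and then integrate over $\Omega$ via Fubini. The only cosmetic difference is that the paper cites an existing IGA inverse estimate (Theorem~4.2 in \cite{IGA3}) for the univariate step, whereas you spell it out via the standard affine scaling argument on each time cell; your explicit remark on why the NURBS weight does not interfere with the temporal factor is a useful clarification that the paper leaves implicit.
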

\begin{proof}
	We remark that $\partial_{x_i} v_h$ is piecewise smooth and for  $\p{x} \in \mathcal{Q}$ the function  $\partial_{x_i} v_h(\p{x}, \cdot)$ is continuous in time. By the product structure of the space $\mathcal{V}_{h,\tilde{\p{r}}}$ and $\mathcal{Q}$ one sees that $\partial_{x_i} v_h(\p{x}, \cdot)$ is an element of a univariate spline space $ V_{h_T,r_T}$ with mesh size $h_T$. Due to the regularity of the mesh and as a consequence of Theorem 4.2 in \cite{IGA3} we find a  constant $C$ independent of $\p{x}$ s.t. the estimate
	$$ h_T \norm{\partial_t\partial_{x_i} v_h(\p{x}, \cdot)}_{L^2(I)} \leq C \norm{\partial_{x_i} v_h(\p{x}, \cdot)}_{L^2(I)}.$$ is fulfilled. Integration over $\Omega$ yields the assertion for inequality \eqref{eq1}. The second estimate \eqref{eq2} can be proven analogously.
\end{proof}

Next we define for  the space $\p{\mathcal{V}}_0 \times \mathcal{W}_0 $ another auxiliary norm 
\begin{align}
\label{def:h_star_norm}
\norm{[\p{u},p]}_{h,\star}^2 \coloneqq     \norm{\partial_t\p{u}}_{\p{\mathcal{H}}^1(\mathcal{Q})}^2 + \frac{1}{h_T}   \big( \norm{\p{u}}_{\p{\mathcal{H}}^1(\mathcal{Q})}^2 + \norm{p}_{L^2(\mathcal{Q})}^2\big) + \norm{\partial_t p}_{L^2(\mathcal{Q})}^2+\norm{\nabla_xp}_{\p{L}^2(\mathcal{Q})}^2
\end{align}
and state a boundedness result for $b_{ST}$.
\begin{lemma}
	\label{lemma:continuity_mathfrak_A}
	\label{lemma:continuityA}
	The bilinear form $b_{ST}$ is continuous w.r.t. the norms $\norm{\cdot}_h$ and $\norm{\cdot}_{h,\star}$ in the sense $$b_{ST}([\p{u},p],[\p{u}_h,p_h]) \leq \mu_b \, \norm{[\p{u},p]}_{h,\star}  \norm{[\p{u}_h,p_h]}_h$$
	for all $[\p{u},p] \in \p{\mathcal{V}}_0 \times \mathcal{W}_0$ and $[\p{u}_h,p_h] \in \p{\mathcal{V}}_{h,r_u,r_T} \times \mathcal{W}_{h,r_p,r_T}$ and  some constant $\mu_b$ independent of $c_0$ and the mesh size.
\end{lemma}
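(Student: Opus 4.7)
The plan is to expand $b_{ST}([\p{u},p],[\p{u}_h,p_h])$ according to \eqref{eq:discrete_bilinear} into its ten scalar-product contributions, apply Cauchy--Schwarz to each, and then rearrange the resulting products so that one factor can be absorbed into $\norm{[\p{u},p]}_{h,\star}$ and the other into $\norm{[\p{u}_h,p_h]}_{h}$. The arsenal I will draw on is: (i) the Cauchy--Schwarz / continuity bounds for the elasticity and diffusion forms $\tilde{e}(\cdot,\cdot)$ and $\tilde{a}(\cdot,\cdot)$ (the boundedness of $\p{C}$ and $\p{\mathcal{K}}$ is built into our assumptions); (ii) the Poincar\'e inequality on each time slice applied to $p_h$, and also to $\partial_t p_h$, both of which vanish on $\Sigma_p$ since $p_h\equiv 0$ there; (iii) the inverse inequalities of Lemma \ref{inversineq_ad}, which allow me to trade a factor $h_T\partial_t$ applied to a discrete function for a constant times the undifferentiated function; and (iv) the bounds $c_0\le 1$ and $h_T\le T$, which turn otherwise mesh- or parameter-dependent factors into constants.

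For the terms built from $\tilde{e}$ and from $\langle p+h_T\partial_t p,\nabla_x\cdot\partial_t\p{u}_h\rangle$ I will bound each factor by an $\p{\mathcal{H}}^1(\mathcal{Q})$ or $L^2(\mathcal{Q})$ norm. For example, the contribution $\tilde{e}(\p{u},\partial_t\p{u}_h)$ is bounded by $C\,\norm{\p{u}}_{\p{\mathcal{H}}^1(\mathcal{Q})}\,\norm{\partial_t\p{u}_h}_{\p{\mathcal{H}}^1(\mathcal{Q})}$, which I rewrite as $\bigl(\norm{\p{u}}_{\p{\mathcal{H}}^1(\mathcal{Q})}/\sqrt{h_T}\bigr)\cdot\bigl(\sqrt{h_T}\,\norm{\partial_t\p{u}_h}_{\p{\mathcal{H}}^1(\mathcal{Q})}\bigr)$, so the first factor is controlled by $\norm{[\p{u},p]}_{h,\star}$ and the second by $\norm{[\p{u}_h,p_h]}_h$. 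Contributions carrying an outer factor $h_T$ on the trial side, such as $h_T\,\tilde{e}(\partial_t\p{u},\partial_t\p{u}_h)$ and $h_T\langle\partial_t p,\nabla_x\cdot\partial_t\p{u}_h\rangle$, I will split as $\sqrt{h_T}\cdot\sqrt{h_T}$ and use $\sqrt{h_T}\le\sqrt{T}$ to dump the trial-side factor into a bounded constant while the test-side $\sqrt{h_T}$ joins $\norm{\partial_t\p{u}_h}_{\p{\mathcal{H}}^1(\mathcal{Q})}$ inside $\norm{\cdot}_h$.

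The terms that look problematic at first sight are the compressibility pair $c_0\langle\partial_t p,p_h+h_T\partial_t p_h\rangle$ and the coupling pair $\langle\nabla_x\cdot\partial_t\p{u},p_h+h_T\partial_t p_h\rangle$, because bare $L^2$-norms of $p_h$ and $\partial_t p_h$ over $\mathcal{Q}$ do not appear in $\norm{\cdot}_h$. Here the Poincar\'e inequality resolves the issue: since $p_h(\cdot,t)=0$ on $\Gamma_p$ for every $t$, I get $\norm{p_h}_{L^2(\mathcal{Q})}\le C_P\,\norm{\nabla_x p_h}_{\p{L}^2(\mathcal{Q})}$, and the resulting $\norm{\nabla_x p_h}_{\p{L}^2(\mathcal{Q})}$ belongs to $\norm{\cdot}_h$. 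The $c_0$-factor in the compressibility term is absorbed using $c_0\le 1$, which is exactly why the final constant $\mu_b$ is independent of $c_0$. For the terms involving $h_T\,\partial_t p_h$ (such as $h_T\langle\nabla_x\cdot\partial_t\p{u},\partial_t p_h\rangle$ and $h_T\tilde{a}(p,\partial_t p_h)$) I invoke the inverse inequalities \eqref{eq1}--\eqref{eq2} to replace $h_T\norm{\partial_t p_h}_{L^2(\mathcal{Q})}$ and $h_T\norm{\nabla_x\partial_t p_h}_{\p{L}^2(\mathcal{Q})}$ by constant multiples of $\norm{p_h}_{L^2(\mathcal{Q})}$ and $\norm{\nabla_x p_h}_{\p{L}^2(\mathcal{Q})}$ respectively, and then reuse Poincar\'e where necessary. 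The compressibility contribution $c_0 h_T\langle\partial_t p,\partial_t p_h\rangle$ is handled by the symmetric splitting $c_0 h_T = \sqrt{c_0 h_T}\cdot\sqrt{c_0 h_T}$ together with $\sqrt{c_0 h_T}\le\sqrt{T}$, so one $\sqrt{c_0 h_T}$ joins $\norm{\partial_t p_h}_{L^2(\mathcal{Q})}$ inside $\norm{\cdot}_h$ and the other is reduced to a bounded constant.

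The main obstacle is bookkeeping: one must make sure that every constant that appears (from $\tilde e,\tilde a$, from Poincar\'e, from the inverse inequalities, and from $\sqrt{T}$) is genuinely independent of both $h_T$ and $c_0$. This dictates the choices above; in particular it is crucial never to apply the inverse inequality to non-discrete arguments, and to use $c_0\le 1$ only to absorb (not to amplify) factors. Once every term has been cast in the form $(\text{first factor}\le\norm{[\p{u},p]}_{h,\star})\cdot(\text{second factor}\le\norm{[\p{u}_h,p_h]}_h)$, a final application of Cauchy--Schwarz on the sum of the ten estimates yields the claim with $\mu_b$ equal to a finite sum of those constants.
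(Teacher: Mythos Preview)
Your proposal is correct and follows essentially the same route as the paper: the paper decomposes $b_{ST}$ into eight grouped terms $S_1,\ldots,S_8$ (your ten are the same contributions, slightly regrouped), applies Cauchy--Schwarz to each, and uses precisely the same tools you list---continuity of $\tilde e,\tilde a$, the $h_T^{-1/2}\cdot h_T^{1/2}$ and $\sqrt{c_0 h_T}\cdot\sqrt{c_0 h_T}$ splittings, Poincar\'e on $p_h$, and the inverse inequalities \eqref{eq1}--\eqref{eq2} for $h_T\norm{\partial_t p_h}_{L^2}$ and $h_T\norm{\nabla_x\partial_t p_h}_{\p{L}^2}$. Your explicit mention of $\sqrt{h_T}\le\sqrt{T}$ is a detail the paper leaves implicit but indeed uses.
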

\begin{proof}
	We first look at the different terms appearing in the definition of $b_{ST}$ and estimate them separately.  Doing so, we also introduce some auxiliary constants $C_1, \dots, C_8$.
	\begin{itemize}
		\item By the definition of the elasticity bilinear form $\tilde{e}(\cdot,\cdot)$ (see Definition \ref{def:discrete variational form}) and the Cauchy-Schwarz inequality  we have: 
		\begin{align*}
		S_1(\p{u},p,\p{u}_h,p_h) &\coloneqq \tilde{e}(\p{u},\partial_t\p{u}_h) 
		= \int_{\mathcal{Q}} C_{ijkl} \p{\epsilon}_{kl}(\p{u}) \p{\epsilon}_{ij}(\partial_t\p{u}_h) \, d\p{x}dt \\	& =  \int_{\mathcal{Q}}    \frac{1}{4} \, C_{ijkl} \big( \partial_{x_k} u_l \, \partial_{x_i} \partial_t u_{h,j}   +\partial_{x_l} u_k \, \partial_{x_i}  \partial_t u_{h,j} \\ & \hspace{3cm} +\partial_{x_k} u_l \, \partial_{x_j} \partial_t u_{h,i}+\partial_{x_l}  u_k \, \partial_{x_j} \partial_t u_{h,i} \  \big) d\p{x}dt \\
		& \leq  C_{\tilde{e}}  \frac{1}{\sqrt{h_T}}\norm{\p{u}}_{\p{\mathcal{H}}^1(\mathcal{Q})} \sqrt{h_T}\norm{\partial_t\p{u}_h}_{\p{\mathcal{H}}^1(\mathcal{Q})}     \\
		& \leq C_{1} \, \norm{[\p{u},p]}_{h,\star} \norm{[\p{u}_h,p_h]}_{h}.
		\end{align*}
		Above we can set $	C_{\tilde{e}} \coloneqq \sum_{i,j,k,l} |C_{ijkl}| $ and $u_{i} $ denotes the $i$-th component of $\p{u}$.
		\item  The Cauchy-Schwarz inequality  yields 
		\begin{align*}
		S_2(\p{u},p,\p{u}_h,p_h) & \coloneqq -\langle p,\nabla_x \cdot \partial_t \p{u}_h\rangle_{L^2(\mathcal{Q})} 
		\leq  \norm{p}_{L^2(\mathcal{Q})}  \norm{\nabla_x \cdot \partial_t\p{u}_h}_{L^2(\mathcal{Q})}   \\
		& \leq \frac{1}{\sqrt{h_T}}\norm{p}_{L^2(\mathcal{Q})}   \sqrt{3 \, h_T}\norm{\partial_t\p{u}_h}_{\p{\mathcal{H}}^1(\mathcal{Q})}  \\
		&\leq C_2 \norm{[\p{u},p]}_{h,\star}  \norm{[\p{u}_h,p_h]}_{h}.
		\end{align*} 
		\item In an analogous manner to the first point  it holds 
		\begin{align*}
		S_3(\p{u},p,\p{u}_h,p_h)  \coloneqq h_T \, \tilde{e}( \partial_t \p{u},\partial_t\p{u}_h) & \leq C_{\tilde{e}} \, h_T \, \norm{\partial_t\p{u}}_{\p{\mathcal{H}}^1(\mathcal{Q})} \norm{\partial_t\p{u}_h}_{\p{\mathcal{H}}^1(\mathcal{Q})} \\
		&  \leq  C_{\tilde{e}} \, \sqrt{h_T} \, \norm{\partial_t\p{u}}_{\p{\mathcal{H}}^1(\mathcal{Q})} \sqrt{h_T} \, \norm{\partial_t\p{u}_h}_{\p{\mathcal{H}}^1(\mathcal{Q})} \\
		& \leq  C_{3}  \, \norm{[\p{u},p]}_{h,\star}  \norm{[\p{u}_h,p_h]}_{h}. 
		\end{align*}
		\item A further  term can be bounded  similarly as in point 2:
		\begin{align*}
		S_4(\p{u},p,\p{u}_h,p_h) & \coloneqq -h_T \, \langle \partial_t p,\nabla_x \cdot\partial_t\p{u}_h\rangle_{L^2(\mathcal{Q})} \\
		&\leq  h_T \, \norm{\partial_t p}_{L^2(\mathcal{Q})}   \norm{\nabla_x \cdot  \partial_t\p{u}_h}_{L^2(\mathcal{Q})} \\ & \leq  \sqrt{h_T} \norm{\partial_t p}_{L^2(\mathcal{Q})}  \sqrt{3\,h_T}   \,\norm{\partial_t\p{u}_h}_{\p{\mathcal{H}}^1(\mathcal{Q})} \\
		&\leq C_4 \norm{[\p{u},p]}_{h,\star}  \norm{[\p{u}_h,p_h]}_{h}.
		\end{align*}
		\item Then we have by the assumption $c_0\leq 1$ and by means of the Poincar\'{e} inequality  (see, e.g., Example 3  in \cite{grser2015note}) for some constant $C_P$:
		\begin{align*}
		S_5(\p{u},p,\p{u}_h,p_h) & \coloneqq 
		\langle c_0 \, \partial_t p+\nabla_x \cdot \partial_t\p{u} , p_h \rangle_{L^2(\mathcal{Q})}   \\
		& \leq ( c_0 \norm{\partial_t p}_{L^2(\mathcal{Q})} + \sqrt{3} \norm{ \partial_t\p{u}}_{\p{\mathcal{H}}^1(\mathcal{Q})} )  \,  \norm{p_h}_{L^2(\mathcal{Q})} \\
		& \leq  C_P \, ( c_0 \norm{\partial_t p}_{L^2(\mathcal{Q})} + \sqrt{3}\norm{ \partial_t\p{u}}_{\p{\mathcal{H}}^1(\mathcal{Q})} )   \, \norm{\nabla_xp_h}_{\p{L}^2(\mathcal{Q})}  \\
		& \leq C_5 \, \norm{[\p{u},p]}_{h,\star}  \norm{[\p{u}_h,p_h]}_{h}. 
		\end{align*}
		
		\item Since $\p{\mathcal{K}}$ is a symmetric positive-definite matrix one further obtains by the Definition \ref{def:discrete variational form}  that
		\begin{align*}
		S_6(\p{u},p,\p{u}_h,p_h) \coloneqq\tilde{a}(p,p_h) &\leq   \mu_K \,  \norm{\nabla_xp}_{\p{L}^2(\mathcal{Q})} \norm{\nabla_xp_h}_{\p{L}^2(\mathcal{Q})}   \\
		& \leq C_6 \norm{[\p{u},p]}_{h,\star}  \norm{[\p{u}_h,p_h]}_{h},
		\end{align*}
		where the positive number $\mu_K < \infty $ is the supremum over all  eigenvalues of the matrices $\p{\mathcal{K}}(\p{x})$.
		\item We proceed with the seventh term. Again Cauchy-Schwarz along with \eqref{eq2} yields
		\begin{align*}
		S_7(\p{u},p,\p{u}_h,p_h)& \coloneqq 
		h_T \, \langle c_0 \, \partial_t p+\nabla_x \cdot \partial_t\p{u} , \partial_t p_h \rangle_{L^2(\mathcal{Q})} \\ &\leq c_0 \, \sqrt{h_T} \norm{\partial_t p}_{L^2(\mathcal{Q})}  \sqrt{h_T} \norm{\partial_t p_h}_{L^2(\mathcal{Q})} \\ 
		& \hspace{2cm}+  \norm{ \nabla_x \cdot \,\partial_t\p{u}}_{L^2(\mathcal{Q})} h_T \, \norm{\partial_t p_h}_{L^2(\mathcal{Q})} \\
		& 
		\leq    \sqrt{c_0 \,h_T} \norm{\partial_t p}_{L^2(\mathcal{Q})} \sqrt{c_0\,h_T} \norm{\partial_t p_h}_{L^2(\mathcal{Q})}\\ 
		& \hspace{2cm} +\sqrt{3} \norm{ \partial_t\p{u}}_{\p{\mathcal{H}}^1(\mathcal{Q})}  \, C_{inv,2} \, \norm{p_h}_{L^2(\mathcal{Q})} \\  &\leq C_7 \norm{[\p{u},p]}_{h,\star}  \norm{[\p{u}_h,p_h]}_{h}.
		\end{align*}
		Note, to obtain the last inequality sign we applied again the Poincar\'{e} inequality\\ $\norm{p_h}_{L^2(\mathcal{Q})} \leq C_P \norm{\nabla_xp_h}_{\p{L}^2(\mathcal{Q})}$.
		\item Finally, for the last term and with $\mu_K$ defined in point 6 it is
		\begin{align*}
		S_8(\p{u},p,\p{u}_h,p_h) & \coloneqq 
		h_T	 \, \tilde{a}( p, \partial_t p_h )  \\ 
		&\leq h_T \,  \mu_K \, \norm{\nabla_xp}_{\p{L}^2(\mathcal{Q})}  \norm{\nabla_x \partial_t p_h}_{\p{L}^2(\mathcal{Q})}  \\  & \leq \mu_K  \, C_{inv,1} \,  \norm{\nabla_xp}_{\p{L}^2(\mathcal{Q})} \norm{\nabla_xp_h}_{\p{L}^2(\mathcal{Q})} \\
		&\leq  C_{8} \, \norm{[\p{u},p]}_{h,\star}  \norm{[\p{u}_h,p_h]}_{h}.
		\end{align*}
		Here the fore-last inequality sign follows by using the adapted inverse  estimate \eqref{eq2} in Lemma \ref{inversineq_ad}  and the regular  mesh assumption.
	\end{itemize}
	Summarizing,  we obtain the original form $b_{ST}$ as the sum of the different terms $S_i$ , i.e. $$\sum_{i=1}^{8} S_i(\p{u},p,\p{u}_h,p_h) = b_{ST}([\p{u},p], [\p{u}_h,p_h]).$$
	By adding up the above inequalities, the statement follows with $\mu_b \coloneqq \sum_i C_i$.
\end{proof}

Next we introduce NURBS projections, i.e. projections onto NURBS spaces, which can be used to measure the approximation properties of the test function spaces.

\begin{lemma}
	\label{NURBS_spprox1}
	Let $v \in H^s(\mathcal{Q}), \ s \geq \max\{s_1,s_2 \} $ with $ 1 \leq s_1 \leq r_T+1, \  1 \leq s_2 \leq r_S+1 $ and $\mathcal{V}_{h, r_S, r_T}$  the space-time NURBS space with an underlying regular mesh. \\
	Then there exists a projection  $\Pi_h \colon H^1(\mathcal{Q}) \rightarrow \mathcal{V}_{h,r_S,r_T}$ and  constants $C_{\Pi,j} $ not depending on $h_S, \, h_T$ and $v$ such that
	\begin{align*}
	\norm{\Pi_h v-v}_{L^2(\mathcal{Q})} &\leq C_{\Pi,1} (h_T^{s_1}+h_S^{s_2}) \norm{v}_{H^s(\mathcal{Q})}, \\
	\norm{\partial_{x_i} \big( \Pi_hv-v\big)}_{L^2(\mathcal{Q})} &\leq C_{\Pi,2}  (h_T^{s_1-1}+h_S^{s_2-1}) \norm{v}_{H^s(\mathcal{Q})}, \\
	\norm{\partial_t \big( \Pi_hv-v\big)}_{L^2(\mathcal{Q})} &\leq C_{\Pi,3}  (h_T^{s_1-1}+h_S^{s_2-1}) \norm{v}_{H^s(\mathcal{Q})}.
	\end{align*}
	If additionally $s \geq \max\{ s_1+1, s_2+1\}$, it holds moreover
	\begin{align*}
	\norm{\partial_{x_i}\partial_t \big( \Pi_hv-v\big)}_{L^2(\mathcal{Q})} &\leq C_{\Pi,4}  (h_T^{s_1-1}+h_S^{s_2-1}) \norm{v}_{H^s(\mathcal{Q})}, \\
	\norm{\partial_{x_i} \big( \Pi_hv-v\big)}_{L^2(\mathcal{Q})} &\leq C_{\Pi,5}  (h_T^{s_1}+h_S^{s_2-1}) \norm{v}_{H^s(\mathcal{Q})}.
	\end{align*} 
	The approximation results are also valid in case of homogeneous Dirichlet boundary conditions on the whole or on a part of the boundary.
\end{lemma}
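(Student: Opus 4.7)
The plan is to construct $\Pi_h$ as a tensor product of univariate/multivariate projectors supplied by standard IGA approximation theory and then derive the listed estimates by combining their one-dimensional counterparts through triangle inequalities and Fubini's theorem. The product structure \eqref{eq : product spaces} of $\mathcal{V}_{h,r_S,r_T}=V_{h_S,r_S}\otimes V_{h_T,r_T}$ is the key ingredient that makes this construction available.

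First, I would invoke results from \cite{IGA3} (in particular the quasi-interpolant estimates of Section~4 there) to obtain a spatial NURBS projector $\Pi_S\colon L^2(\Omega)\to V_{h_S,r_S}$ and a temporal spline projector $\Pi_T\colon L^2(I)\to V_{h_T,r_T}$, both $L^2$-stable, both preserving the relevant homogeneous Dirichlet traces, and satisfying univariate estimates of the form
\[
\norm{w-\Pi_S w}_{H^k(\Omega)}\le C\,h_S^{s_2-k}\norm{w}_{H^{s_2}(\Omega)},\qquad \norm{w-\Pi_T w}_{H^k(I)}\le C\,h_T^{s_1-k}\norm{w}_{H^{s_1}(I)},
\]
for $0\le k\le s_j$. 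Defining $\Pi_h\coloneqq \Pi_S\otimes\Pi_T$ then gives a map $H^1(\mathcal{Q})\to \mathcal{V}_{h,r_S,r_T}$ that commutes with $\partial_t$ through the $\Pi_S$-factor and with $\partial_{x_i}$ through the $\Pi_T$-factor, since each factor acts on an independent variable and preserves polynomial degrees in the other.

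Second, for the $L^2$-bound I would use the split
\[
\Pi_h v-v=(\Pi_S-\mathrm{Id})v+\Pi_S(\Pi_T v-v),
\]
bound the first summand by the spatial univariate estimate fibre-wise in $t$ and integrate via Fubini, and bound the second by $L^2$-stability of $\Pi_S$ combined with the temporal univariate estimate fibre-wise in $\p{x}$. This yields $\norm{\Pi_h v-v}_{L^2(\mathcal{Q})}\lesssim (h_T^{s_1}+h_S^{s_2})\norm{v}_{H^s(\mathcal{Q})}$. The estimates for $\partial_{x_i}(\Pi_h v-v)$ and $\partial_t(\Pi_h v-v)$ are obtained by the same splitting after commuting the derivative with the factor that does not act on its variable, and using the appropriate $H^1$-error estimate on the other factor. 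The mixed-derivative bound is identical after applying the spatial estimate to $\partial_t v$, which is why an extra order of regularity is required.

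Finally, the improved estimate with $h_T^{s_1}$ on a spatial derivative comes from the order in which errors are accumulated: in the split
\[
\partial_{x_i}(\Pi_h v-v)=\partial_{x_i}(\Pi_S v-v)+\partial_{x_i}\Pi_S(\Pi_T v-v),
\]
the first term contributes $h_S^{s_2-1}$ via the univariate spatial estimate, while for the second I commute $\partial_{x_i}$ with $\Pi_T$ and use $L^2$-boundedness of $\partial_{x_i}\Pi_S$ on spline spaces (one higher regularity of $v$ in space is available), so the temporal approximation keeps its full order $h_T^{s_1}$. The main subtlety I expect is bookkeeping for the boundary-preserving variants of $\Pi_S$ and $\Pi_T$ on $\Sigma_u\cup\Sigma_0$ or $\Sigma_p\cup\Sigma_0$; this is handled by selecting the Schumaker-type boundary-preserving quasi-interpolants of \cite{IGA3}, whose tensor-product structure survives all the manipulations above, so the proofs go through unchanged.
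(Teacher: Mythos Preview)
Your tensor-product construction is correct and is a genuinely different route from the paper's proof. The paper does \emph{not} build $\Pi_h$ as $\Pi_S\otimes\Pi_T$ and then telescope the error. Instead it takes a single space--time quasi-interpolant $\Pi_h$ directly from the anisotropic NURBS approximation theory in \cite{IGA1} (Part~3, Theorem~7 and Remark~14), and handles the various derivatives by pulling back to the parametric domain: using the product structure \eqref{eq:tensor_structure} of $\tilde{\p{\Phi}}$, the chain rule gives pointwise bounds such as $|\partial_{x_i}\partial_t v|\le C\sum_j |D_{\tilde{\p{\Phi}}}^{\p{e}_j+(0,\dots,0,1)}v|$, where $D_{\tilde{\p{\Phi}}}^{\p{s}}$ denotes the parametric-direction derivative transported to the physical domain. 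Once everything is written in these $D_{\tilde{\p{\Phi}}}^{\p{s}}$-derivatives, the anisotropic estimate of \cite{IGA1} yields all five inequalities at once, with the correct distinction between $h_T$ and $h_S$ coming from the parametric-direction index rather than from a splitting of the projector.

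Your approach is more modular and makes the origin of each $h_T^{s_1}$ versus $h_S^{s_2}$ factor transparent; the paper's is shorter because it leans entirely on one black-box theorem and avoids the bookkeeping of stability of $\Pi_S$ in $H^1$ (the step you phrase as ``$L^2$-boundedness of $\partial_{x_i}\Pi_S$'', which really needs $H^1$-stability of the spatial quasi-interpolant). Both handle the boundary-preserving case the same way, by appealing to the modified quasi-interpolants that respect homogeneous traces; note that in this paper that reference is \cite{IGA1}, not \cite{IGA3}.
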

\begin{proof}
	In the following, $C$ denotes a constant that depends only on the parametrization but may change at different occurrences. 
	The	main idea of the proof is the application of IGA approximation results presented in \cite{IGA1} (Part 3). For a better understanding we define the  auxiliary derivatives \begin{align}
	\label{aux_weak}
	D_{\tilde{\p{\Phi}}}^{\p{s}} u \coloneqq  \big( \partial_{\zeta_1}^{s_1} \dots \partial_{\zeta_d}^{s_d} \partial_{\zeta_{d+1}}^{s_{d+1}}(u \circ \tilde{\p{\Phi}}) \big) \circ \tilde{\p{\Phi}}^{-1}, \ \ \ \p{s} =(s_1,\dots,s_{d+1})\in \mathbb{N}^{d+1},
	\end{align} for sufficiently regular mappings $u$. Here $\partial_{\zeta_{i}}$ stands for the derivative w.r.t. the $i$-th coordinate in the parametric domain  $\hat{\mathcal{Q}}$. Latter definition is analogous to (56) in Part 3 of the mentioned reference.
	Let $v \in H^2(\mathcal{K})$ and $\mathcal{K}$ an element of the physical mesh  $\mathcal{M}$. The next step relates the weak derivatives to the definition \eqref{aux_weak}. 
	By the chain rule and the regularity of $\tilde{\p{\Phi}}$ we have 
	$$ \partial_t v= \partial_t \big[ \big(v \circ \tilde{\p{\Phi}}\big) \circ \tilde{\p{\Phi}}^{-1} \big]=  \hat{\nabla} \big(v \circ \tilde{\p{\Phi}} \big) \circ \tilde{\p{\Phi}}^{-1}  \cdot \partial_t \tilde{\p{\Phi}}^{-1}.$$
	We denote with $\hat{\nabla}$ the gradient w.r.t. to the coordinates $ \ \zeta_1, \dots,\zeta_{d+1} $ of the parametric domain $\hat{\mathcal{Q}}$. 
	Further rearrangements yield with the structure of $\tilde{\p{\Phi}}$ (see \eqref{eq:tensor_structure}),
	\begin{align}
	\label{dd1}
	\partial_t v =  \frac{1}{T} \,  \partial_{\zeta_{d+1}}\big(v \circ \tilde{\p{\Phi}}\big) \circ \tilde{\p{\Phi}}^{-1}= \frac{1}{T} D_{\tilde{\p{\Phi}}}^{(0,\dots,0,1)} v .
	\end{align}
	Moreover, using again the chain rule  it holds
	\begin{align*}
	\partial_{x_i} \partial_t v&= \frac{1}{T}\partial_{x_i} \big[ \partial_{\zeta_{d+1}}\big(v \circ \tilde{\p{\Phi}}\big) \circ \tilde{\p{\Phi}}^{-1} \big]=\frac{1}{T} \hat{\nabla} \big[ \partial_{\zeta_{d+1}} \big(v \circ \tilde{\p{\Phi}}\big) \big]  \circ \tilde{\p{\Phi}}^{-1}  \cdot \partial_{x_i} \tilde{\p{\Phi}}^{-1} \\
	&= \frac{1}{T}    \sum_{j=1}^d \big[ \partial_{\zeta_{j}} \partial_{\zeta_{d+1}}\big(v \circ \tilde{\p{\Phi}}\big) \circ \tilde{\p{\Phi}}^{-1} \big]  \, \big(\partial_{x_i} \tilde{\p{\Phi}}^{-1}\big)_j \ .
	\end{align*}
	Let $\p{e}_j \in \mathbb{N}^{d+1}$	 be the $j$-th canonical basis vector.
	Then we can choose  a constant $C=C(  \tilde{\p{\Phi}}^{-1})$  such that 
	\begin{align}
	\label{dd2}
	|\partial_{x_i} \partial_t v| \leq  C \, \sum_{j=1}^d \big| \partial_{\zeta_{j}} \partial_{\zeta_{d+1}}\big(v \circ \tilde{\p{\Phi}}\big) \circ \tilde{\p{\Phi}}^{-1}\big|=  C \, \sum_{j=1}^d \big| D_{\tilde{\p{\Phi}}}^{\p{e}_j+(0,\dots,0,1)} v  \big|.
	\end{align} 	
	Consequently, \eqref{dd1} and $\eqref{dd2}$ imply for some (new) constant $C$, only depending on the \\ parametrization, that
	\begin{align}
	\label{dd3}
	\norm{\partial_tv}_{L^2(\mathcal{K})} &\leq C \,  \norm{D_{\tilde{\p{\Phi}}}^{(0,\dots,0,1)} v }_{L^2(\mathcal{K})}, \\
	\norm{\partial_{x_i} \partial_t v}_{L^2(\mathcal{K})} &\leq C \,  \sum_{j=1}^d  \norm{D_{\tilde{\p{\Phi}}}^{\p{e}_j+(0,\dots,0,1)} v }_{L^2(\mathcal{K})} .
	\end{align}
	Similarly one gets 
	\begin{align}
	\norm{\partial_{x_i}v}_{L^2(\mathcal{K})} &\leq C \, \sum_{j=1}^d \norm{ D_{ \tilde{\p{\Phi}}}^{\p{e}_j} v }_{L^2(\mathcal{K})}, \\
	\norm{ v}_{L^2(\mathcal{K})} &=  \norm{  D_{\tilde{\p{\Phi}}}^{(0,\dots,0)} v }_{L^2(\mathcal{K})} .
	\label{dd4}
	\end{align}
	Again by the chain rule and an induction argument one gets a reverse estimate, namely
	\begin{align}
	\label{dd5}
	\norm{	D_{\tilde{\p{\Phi}}}^{\p{s}} u }_{L^2(\mathcal{K})} \leq C \norm{v}_{H^{s}(\mathcal{K})}, \ \ \textup{for} \ \  v \in H^s(\mathcal{K}) \ \ \textup{with} \   \ s=s_1+ \dots + s_{d+1}.
	\end{align}
	Observing the regularity of the mesh and that $h_T$ and $h_S$ denote the mesh sizes in the spatial domain and in the time interval, the assertion follows from the inequalities \eqref{dd3} - \eqref{dd4}  and \eqref{dd5} together with Theorem 7 in Part 3 of \cite{IGA1}. Therein the proof in detail is only shown for the two-dimensional case, i.e. in our setting the  case  $\mathcal{Q} \subset \mathbb{R}^{2}$.  But  the authors  remark the possibility to generalize the proofs and results straightforwardly also for higher-dimensional spaces.
	For the case of homogeneous boundary conditions one gets similar estimates due to Remark 14  in Part 3 of \cite{IGA1}. \\
	This finishes the proof.
\end{proof}

\begin{remark}
	\label{remark : projections}
	In view of  the last lemma we can incorporate homogeneous Dirichlet boundary conditions on the whole or part of the boundary without changing the approximation behavior of the NURBS spaces or NURBS projections, respectively. Thus we find projections
	\begin{align}
	\Pi_h^{\mathcal{W}} &\colon \mathcal{W}_0 \rightarrow \mathcal{W}_{h,r_p,r_T}, \\
	\Pi_h^{\p{\mathcal{V}}} &\colon \p{\mathcal{V}}_0 \rightarrow \mathcal{\p{\mathcal{V}}}_{h,r_u,r_T},  \textup{ with } \ \ \Pi_h^{\p{\mathcal{V}}}  \ \textup{acting componentwise, i.e.  } \big(\Pi_h^{\p{\mathcal{V}}}\p{u} \big)_i  \coloneqq \Pi_h^{\mathcal{V}} u_i, \ \textup{ where} 
	\end{align}
	$\Pi_h^{\mathcal{W}}, \ \Pi_h^{\mathcal{V}}$ satisfy the same  estimates as $\Pi_h$ in the last lemma,  potentially with new constants.
\end{remark}

Now we can state an approximation result for the  NURBS spaces in the norms  $\norm{\cdot}_h$ \textup{and} $\norm{\cdot}_{h,\star}$.
\begin{lemma}
	\label{IGA_space_time_estimates_Biot}
	Let  $ s_1,s_2,s_3 \in \mathbb{N}_{\geq 1} $, \ $ s_1 \leq r_T+1, \  s_2 \leq r_p+1, \ s_3 \leq  r_u+1$, where $r_p, \, r_u$ denote the polynomial degrees in the spatial coordinates and $r_T$ the polynomial degree in the temporal parameter. And let $l_1\geq\max\{s_1, \, s_2\}$ and $l_2 \geq \max \{s_1+1,s_3+1 \} $. \\
	If $p \in \mathcal{W}_0 \cap H^{l_1}(\mathcal{Q})$ and $\p{u} \in \p{\mathcal{V}}_0 \cap \p{H}^{l_2}(\mathcal{Q})$, then it holds
	\begin{align}
	\label{eq:iga_estimates1}
	\norm{\p{\Pi} [\p{u},p]-[\p{u},p]}_{h,\star} &\leq C_{\p{\Pi},1} \, \big(h_T^{s_1-1}+ h_S^{s_2-1}+ h_T^{-0.5}( h_S^{s_2}+h_S^{s_3-1}) \big) \ \mathcal{C}_{l_1,l_2}(\p{u},p), \\ 	\label{eq:iga_estimates2}
	\norm{\p{\Pi} [\p{u},p]-[\p{u},p]}_{h} &\leq C_{\p{\Pi},2} \,  (h_T^{s_1-1} + h_S^{s_2-1}+h_S^{s_3-1}) \ \mathcal{C}_{l_1,l_2}(\p{u},p) ,	\\
	\textup{where} \ \  \ \ \mathcal{C}_{l_1,l_2}(\p{u},p) &\coloneqq  \sqrt{\norm{\p{u}}_{\p{H}^{l_2}(\mathcal{Q})}^2+\norm{p}_{H^{l_1}(\mathcal{Q})}^2}. \nonumber
	\end{align}
	Here the constants $C_{\p{\Pi},1} ,C_{\p{\Pi},2} $ are independent of the mesh sizes $h_T, \ h_S$ and the mappings $p$ and $\p{u}$, but also independent of $c_0$. We use the notation $\p{\Pi} [\p{u},p]-[\p{u},p] \coloneqq [{\p{\Pi}_h^{\p{\mathcal{V}}}}\p{u}-\p{u},{\Pi}_h^{\mathcal{W}}p-p]$.
\end{lemma}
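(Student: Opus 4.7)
My plan is to expand both norms into their defining summands and bound each via the scalar projection estimates of Lemma \ref{NURBS_spprox1} (lifted componentwise to the vector/pressure spaces, as in Remark \ref{remark : projections}), then to assemble the pieces using $h_S,h_T\le 1$ and $c_0\le 1$. Write $\p{w}\coloneqq \p{\Pi}_h^{\p{\mathcal{V}}}\p{u}-\p{u}$ and $r\coloneqq \Pi_h^{\mathcal{W}}p-p$; by construction both vanish on $\Sigma_0$, a fact that will be essential for the $\Sigma_T$-traces below.

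For \eqref{eq:iga_estimates1} I proceed term by term. The pressure contributions $h_T^{-1/2}\norm{r}_{L^2(\mathcal{Q})}$, $\norm{\partial_t r}_{L^2(\mathcal{Q})}$ and $\norm{\nabla_x r}_{\p{L}^2(\mathcal{Q})}$ are handled with the basic bounds of Lemma \ref{NURBS_spprox1}; only $l_1\ge\max\{s_1,s_2\}$ is needed. Dividing $\norm{r}_{L^2(\mathcal{Q})}\le C(h_T^{s_1}+h_S^{s_2})\norm{p}_{H^{l_1}}$ by $\sqrt{h_T}$ yields precisely the $h_T^{-1/2}h_S^{s_2}$ contribution present on the right-hand side of \eqref{eq:iga_estimates1}. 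For the displacement, the summand $h_T^{-1/2}\norm{\p{w}}_{\p{\mathcal{H}}^1(\mathcal{Q})}$ is critical: the basic spatial bound would leave an inadmissible $h_T^{s_1-3/2}$ factor, so I instead invoke the \emph{improved} estimate $\norm{\partial_{x_i}w_j}_{L^2(\mathcal{Q})}\le C(h_T^{s_1}+h_S^{s_3-1})\norm{\p{u}}_{\p{H}^{l_2}}$, whose applicability is exactly what the hypothesis $l_2\ge\max\{s_1+1,s_3+1\}$ guarantees. Combined with the mixed-derivative bound for $\norm{\partial_t\p{w}}_{\p{\mathcal{H}}^1(\mathcal{Q})}$ and the absorption of leftover powers $h_T^{s_1-1/2}$ into $h_T^{s_1-1}$ via $h_T\le 1$, this yields \eqref{eq:iga_estimates1}.

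The delicate part is \eqref{eq:iga_estimates2}, in which the trace terms $\norm{\p{w}}_{\p{H}^1(\Sigma_T)}$ and $\sqrt{c_0}\norm{r}_{L^2(\Sigma_T)}$ appear. The non-trace contributions are routine: the $\sqrt{h_T}$ prefactor turns $h_T^{s_1-1/2}$ into $h_T^{s_1-1}$, and $c_0\le 1$ absorbs the storage coefficient. For the traces I do \emph{not} use the generic bound $\norm{v}^2_{L^2(\Sigma_T)}\lesssim h_T^{-1}\norm{v}^2_{L^2(\mathcal{Q})}+h_T\norm{\partial_t v}^2_{L^2(\mathcal{Q})}$, since that would leak a $h_T^{-1/2}h_S^{s_2}$ term absent from \eqref{eq:iga_estimates2}. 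Instead, using $v|_{\Sigma_0}=0$ and the fundamental theorem of calculus in $t$, I derive the sharper multiplicative identity
\begin{equation*}
\norm{v(\cdot,T)}_{L^2(\Omega)}^2 \;=\; 2\int_0^T \langle v,\partial_t v\rangle_{L^2(\Omega)}\,dt \;\le\; 2\,\norm{v}_{L^2(\mathcal{Q})}\norm{\partial_t v}_{L^2(\mathcal{Q})}.
\end{equation*}
Applying this to $r$, to each component $w_j$, and (using the product structure \eqref{eq : product spaces}) to each spatial derivative $\partial_{x_i}w_j$, and summing, gives $\norm{\p{w}}_{\p{H}^1(\Sigma_T)}^2\le 2\norm{\p{w}}_{\p{\mathcal{H}}^1(\mathcal{Q})}\norm{\partial_t\p{w}}_{\p{\mathcal{H}}^1(\mathcal{Q})}$ and analogously for $r$. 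Inserting the NURBS estimates and applying $\sqrt{ab}\le (a+b)/2$ together with $h_S,h_T\le 1$ then converts every resulting cross product $\sqrt{h_T^\alpha h_S^\beta}$ into an admissible sum of terms in $\{h_T^{s_1-1},\,h_S^{s_2-1},\,h_S^{s_3-1}\}$. This trace step, enabled by the zero initial condition, is the main obstacle of the proof; the rest is bookkeeping.
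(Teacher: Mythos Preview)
Your proposal is correct and follows essentially the same route as the paper: term-by-term application of Lemma~\ref{NURBS_spprox1}, with the multiplicative trace identity $\norm{v(\cdot,T)}_{L^2(\Omega)}^2=2\int_0^T\langle v,\partial_t v\rangle\,dt$ (exploiting $v|_{\Sigma_0}=0$) as the key device for the $\Sigma_T$-terms in \eqref{eq:iga_estimates2}. One small point to make explicit: when you apply this identity to $\partial_{x_i}w_j$, the product structure \eqref{eq : product spaces} only ensures $\partial_{x_i}(\Pi_h^{\mathcal V}u_j)|_{\Sigma_0}=0$; that $\partial_{x_i}u_j|_{\Sigma_0}=0$ as well follows from $\p{u}|_{\Sigma_0}=0$ together with the $\p H^2$-regularity of $\p u$ (trace theorem), which the paper states explicitly.
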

\begin{proof}
	We prove the statement for the norm $\norm{\cdot}_h$  first and consider each term in its definition \eqref{hnorm} separately. Doing so we use several times the estimates of Lemma \ref{NURBS_spprox1} and Remark \ref{remark : projections}. Besides we indicate by $C$ a constant which might change at different occurrences but is independent of the mesh sizes $h_T, \ h_S$ and the functions $\p{u}, \ p$.
	\begin{itemize}
		\item The first summand without the prefactor $h_T$ gives us with a  suitable constant $C$ and Lemma \ref{NURBS_spprox1}:
		\begin{align*}
		\norm{\partial_t \big( \p{\Pi}_h^{\p{\mathcal{V}}}\p{u}-\p{u}\big)}_{\p{\mathcal{H}}^1(\mathcal{Q})}^2 &\leq    \sum_{i,j}  \norm{\partial_{x_i} \partial_t(\p{\Pi}_h^{\mathcal{V}}u_j-u_j)}_{L^2(\mathcal{Q})}^2  \\ & \hspace{0.5cm} +\sum_{i} \norm{ \partial_t(\p{\Pi}_h^{\mathcal{V}}u_i-u_i)}_{L^2(\mathcal{Q})}^2 \\ &\leq C (h_T^{s_1-1}+h_S^{s_3-1})^2 \norm{\p{u}}_{\p{H}^{l_2}(\mathcal{Q})}^2 .
		\end{align*}
		\item For the   second  term we use integration by parts, the chain rule, and Lemma \ref{NURBS_spprox1}:
		\begin{align*}
		\norm{\p{\Pi}_h^{\p{\mathcal{V}}}\p{u}-\p{u}}_{\p{H}^1(\Sigma_T)}^2&= \sum_i \norm{\p{\Pi}_h^{{\mathcal{V}}}u_i-u_i}_{H^1(\Sigma_T)}^2 \\
		& =  \sum_{i,j} \int_{\mathcal{Q}} \partial_t \big[ \  \big(\partial_{x_j}(\p{\Pi}_h^{{\mathcal{V}}}u_i-u_i) \big)^2 \  \big] \, d\p{x}dt  \\ & \hspace{1cm}+ \sum_i \int_{\mathcal{Q}} \partial_t \big[ \ \big({\Pi}_h^{{\mathcal{V}}}u_i-u_i )^2 \ \big] \, d\p{x}dt \\
		& \leq C  \norm{\partial_t(\p{\Pi}_h^{\p{\mathcal{V}}}\p{u}-\p{u})}_{\p{\mathcal{H}}^1(\mathcal{Q})} \,  \norm{\p{\Pi}_h^{\p{\mathcal{V}}}\p{u}-\p{u}}_{\p{\mathcal{H}}^1(\mathcal{Q})} \\
		& \leq C   (h_T^{s_1-1}+h_S^{s_3-1})^2 \norm{\p{u}}_{\p{H}^{l_2}(\mathcal{Q})}^2.
		\end{align*}
		In the second line, we assumed $\partial_{x_j}u=0$ on $\Sigma_0$ in the sense of the trace theorem. This is indeed correct due to the  $\p{H}^2$-regularity of $\p{u}$ and the condition $\p{u}=0$ on $\Sigma_0$.
		\item In an analogous fashion,  one has
		\begin{align*}
		\norm{\Pi_h^{\mathcal{W}}p-p}_{L^2(\Sigma_T)}^2 \leq   C   \,  (h_T^{s_1-1}+h_S^{s_2-1})^2 \norm{p}_{H^{l_1}(\mathcal{Q})}^2 ,
		\end{align*}
		where we used again Lemma \ref{NURBS_spprox1}.
		\item Furthermore we obtain with the above lemma
		\begin{align*}
		 \norm{\partial_t \big(\Pi_h^{\mathcal{W}}p-p\big)}_{L^2(\mathcal{Q})}^2+\norm{\nabla_x \big( \Pi_h^{\mathcal{W}}p-p)}_{\p{L}^2(\mathcal{Q})}^2 
	\\	\hspace{1cm} \leq C  (h_T^{s_1-1}+h_S^{s_2-1})^2 \norm{p}_{H^{l_1}(\mathcal{Q})}^2.
		\end{align*}
	\end{itemize}
	The last four estimates yield 
	\begin{align}
	\norm{\p{\Pi} [\p{u},p]-[\p{u},p]}_h^2  \nonumber  
	&\leq C \, \Big[ \big( \ h_T\,(h_T^{s_1-1}+h_S^{s_3-1})^2  +   (h_T^{s_1-1}+h_S^{s_3-1})^2 \  \big) \norm{\p{u}}_{\p{H}^{l_2}(\mathcal{Q})}^2  \\ & \hspace{1.1cm} +    \,  \big( \  (h_T^{s_1-1}+  h_S^{s_2-1})^2  +  (h_T^{s_1-1}+h_S^{s_2-1})^2  \ \big) \norm{p}_{H^{l_1}(\mathcal{Q})}^2 \Big] \nonumber \\
	\label{hnormesti1}
	& \leq C \, (h_T^{s_1-1}+h_S^{s_2-1}+h_S^{s_3-1})^2 ( \norm{\p{u}}_{\p{H}^{l_2}(\mathcal{Q})}^2+\norm{p}_{H^{l_1}(\mathcal{Q})}^2).
	\end{align}
	This implies \eqref{eq:iga_estimates2}. One notes the assumption $c_0 \leq 1$.\\
	For the $\norm{\cdot}_{h,\star}$-norm it is sufficient to consider the parenthesized term  in \eqref{def:h_star_norm} which gives with prefactor, using again Lemma \ref{NURBS_spprox1} and Remark \ref{remark : projections},
	\begin{align*}
	\frac{1}{h_T} &\norm{\p{\Pi}_h^{\p{\mathcal{V}}}\p{u}-\p{u}}_{\p{\mathcal{H}}^1(\mathcal{Q})}^2 +  \frac{1}{h_T}  \norm{\Pi_h^{\mathcal{W}}p-p}_{L^2(\mathcal{Q})}^2\\
	&\leq C \frac{1}{h_T} (h_T^{s_1}+h_S^{s_3-1})^2  \norm{\p{u}}_{\p{H}^{l_2}(\mathcal{Q})}^2+C \, \frac{1}{h_T} (h_T^{s_1}+h_S^{s_2})^2 \norm{p}_{H^{l_1}(\mathcal{Q})}^2
	\\
	&\leq  C  \frac{1}{h_T}(h_T^{s_1}+h_S^{s_2}+h_S^{s_3-1})^2 ( \norm{\p{u}}_{\p{H}^{l_2}(\mathcal{Q})}^2+\norm{p}_{H^{l_1}(\mathcal{Q})}^2).
	\end{align*}
	The last three lines and the already shown  inequalities   give us the desired bound \eqref{eq:iga_estimates1} for the norm $\norm{\cdot}_{h,\star}$.
\end{proof}

Finally we are ready to prove a convergence estimate for the space-time method.

\begin{theorem}{\textup{\textbf{Convergence for smooth solutions}}}\\ 
	Let the assumptions of Lemma \ref{IGA_space_time_estimates_Biot} be fulfilled, where $l_2\geq 3, \ l_1 \geq 2$.\label{th:conv1}
	Moreover let $\p{u} \in \p{\mathcal{V}}_{0} \cap \p{H}^{l_2}(\mathcal{Q})$, \ $p\in \mathcal{W}_{0} \cap H^{l_1}(\mathcal{Q})$ be the exact solution of the Biot system \eqref{Eq1}-\eqref{Eq2} with initial-boundary conditions \eqref{init_bound_con_1}-\eqref{init_bound_con_4} . \\
	Then we have for the error  between $\p{u},p$ and the solution $\p{u}_h,p_h$ of the finite-dimensional variational problem \eqref{disctre_variational_problem_short}
	\begin{align}
	\label{convergence_equation}
	\norm{[\p{u},p]-[\p{u}_h,p_h]}_h \leq C \big( h_T^{s_1-1}+h_S^{s_2-1}+h_T^{-0.5}  (h_S^{s_2} +  h_S^{s_3-1} )  \big)  \ \mathcal{C}_{l_1,l_2}(\p{u},p),
	\end{align}
	where $C$ is some constant, which does not depend on $c_0$ and the mesh sizes $h_T, \ h_S$.
\end{theorem}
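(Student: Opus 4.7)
The plan is to carry out a standard Strang--C\'ea type argument that combines the discrete coercivity (Lemma \ref{lemma:coercivity}), Galerkin orthogonality from the consistency result (Lemma \ref{lemma:consistency}), the continuity estimate of $b_{ST}$ w.r.t.\ the pair of norms $\norm{\cdot}_{h,\star}$ and $\norm{\cdot}_h$ (Lemma \ref{lemma:continuity_mathfrak_A}), and finally the NURBS approximation estimates (Lemma \ref{IGA_space_time_estimates_Biot}). Since the required regularity $\p{u}\in \p{H}^{l_2}(\mathcal Q)$, $p\in H^{l_1}(\mathcal Q)$ with $l_2\geq 3,\ l_1\geq 2$ is exactly what Lemma \ref{IGA_space_time_estimates_Biot} demands, all ingredients are in place.

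First, I would split the error by inserting the projection $\p{\Pi}[\p{u},p]=[\p{\Pi}_h^{\p{\mathcal V}}\p{u},\Pi_h^{\mathcal W}p]$:
\begin{align*}
[\p{u},p]-[\p{u}_h,p_h]=\bigl([\p{u},p]-\p{\Pi}[\p{u},p]\bigr)+\bigl(\p{\Pi}[\p{u},p]-[\p{u}_h,p_h]\bigr)\eqqcolon [\p{\eta},\eta]+[\p{e}_h,e_h],
\end{align*}
where crucially $[\p{e}_h,e_h]\in \p{\mathcal V}_{h,r_u,r_T}\times \mathcal W_{h,r_p,r_T}$. By the triangle inequality in the norm $\norm{\cdot}_h$, it suffices to bound $\norm{[\p{\eta},\eta]}_h$ and $\norm{[\p{e}_h,e_h]}_h$ separately. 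The first piece is controlled directly by inequality \eqref{eq:iga_estimates2} of Lemma \ref{IGA_space_time_estimates_Biot}.

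For the discrete error $[\p{e}_h,e_h]$, I would apply coercivity of $b_{ST}$ and then use Galerkin orthogonality. Lemma \ref{lemma:consistency} yields $b_{ST}([\p{u},p]-[\p{u}_h,p_h],[\p{v}_h,q_h])=0$ for all test functions in the discrete space, so with the admissible choice $[\p{v}_h,q_h]=[\p{e}_h,e_h]$ one obtains
\begin{align*}
\mu_c\,\norm{[\p{e}_h,e_h]}_h^2 \leq b_{ST}([\p{e}_h,e_h],[\p{e}_h,e_h]) = b_{ST}(-[\p{\eta},\eta],[\p{e}_h,e_h]).
\end{align*}
Continuity via Lemma \ref{lemma:continuity_mathfrak_A} then gives
\begin{align*}
\mu_c\,\norm{[\p{e}_h,e_h]}_h \leq \mu_b\,\norm{[\p{\eta},\eta]}_{h,\star},
\end{align*}
and the right-hand side is estimated by inequality \eqref{eq:iga_estimates1} of Lemma \ref{IGA_space_time_estimates_Biot}. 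Combining the two pieces and noting that the $\norm{\cdot}_{h,\star}$-bound dominates the $\norm{\cdot}_h$-bound (the latter lacking the $h_T^{-0.5}h_S^{s_2}$ term), the theorem follows with a constant $C=C(\mu_c,\mu_b,C_{\p{\Pi},1},C_{\p{\Pi},2})$ independent of $h_T,h_S$ and $c_0$.

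The potential obstacle is not the algebraic manipulation, which is routine, but rather ensuring that the Galerkin-orthogonality step is legitimate: one must verify that $[\p{e}_h,e_h]$ lies in the admissible test space, which in turn requires that the projections $\Pi_h^{\mathcal W}$ and $\p{\Pi}_h^{\p{\mathcal V}}$ respect the homogeneous Dirichlet conditions on $\Sigma_u\cup\Sigma_0$ and $\Sigma_p\cup\Sigma_0$ respectively. This is guaranteed by Remark \ref{remark : projections}, which is exactly why the approximation estimates were stated to survive under homogeneous boundary conditions. With that observation, the argument closes cleanly.
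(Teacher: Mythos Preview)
Your proposal is correct and follows essentially the same Strang--C\'ea type argument as the paper: split the error via the NURBS projection, use consistency (Lemma~\ref{lemma:consistency}) for Galerkin orthogonality, apply coercivity (Lemma~\ref{lemma:coercivity}) and the mixed-norm continuity (Lemma~\ref{lemma:continuity_mathfrak_A}) to bound the discrete part by $\norm{\p{\Pi}[\p{u},p]-[\p{u},p]}_{h,\star}$, and finish with the projection estimates of Lemma~\ref{IGA_space_time_estimates_Biot}. Your explicit remark that $[\p{e}_h,e_h]$ is an admissible test pair thanks to Remark~\ref{remark : projections} makes the orthogonality step cleaner than in the paper, but the overall route is identical.
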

\begin{proof}
	We make use of the coercivity and continuity of $b_{ST}$, shown in Lemma \ref{lemma:coercivity} and Lemma \ref{lemma:continuityA} and obtain with the NURBS projections  $\p{\Pi} [\p{u},p] \coloneqq [\p{\Pi}_h^{\p{\mathcal{V}}}\p{u},\Pi_h^{\mathcal{W}}p]$, $\Pi_h^{\mathcal{W}} \colon \mathcal{W}_0  \rightarrow \mathcal{W}_{h,r_p,r_T}$ and  $\p{\Pi}_h^{\p{\mathcal{V}}} \colon \p{\mathcal{V}}_0  \rightarrow \p{\mathcal{V}}_{h,r_u,r_T} $: 
	\begin{align*}
	\mu_c \, \norm{\p{\Pi} [\p{u},p]-[\p{u}_h,p_h]}_{h}^2 &\leq b_{ST}\big(\p{\Pi} [\p{u},p]-[\p{u}_h,p_h] \ ,\ \p{\Pi} [\p{u},p]-[\p{u}_h,p_h]\big) \\
	& = b_{ST}\big(\p{\Pi} [\p{u},p]-[\p{u},p] \ , \ \p{\Pi} [\p{u},p]-[\p{u}_h,p_h]\big) \\
	& \leq \mu_b \, \norm{\p{\Pi} [\p{u},p]-[\p{u},p]}_{h,\star}   \norm{\p{\Pi} [\p{u},p]-[\p{u}_h,p_h]}_{h}.
	\end{align*}  
	Note that we used the consistency result of Lemma \ref{lemma:consistency}.
	Above inequality chain implies
	\begin{align*}
	\norm{\p{\Pi} [\p{u},p]-[\p{u}_h,p_h]}_{h} \leq \frac{\mu_b}{\mu_c} \, \norm{\p{\Pi} [\p{u},p]-[\p{u},p]}_{h,\star}.
	\end{align*}
	We remark for the above inequality that $\p{\Pi} [\p{u},p]-[\p{u}_h,p_h] \in \p{\mathcal{V}}_{h,r_u,r_T} \times \mathcal{W}_{h,r_p,r_T}$. \\
	By the previous  inequality and Lemma \ref{IGA_space_time_estimates_Biot} it follows 
	\begin{align*}
	\norm{[\p{u},p]-[\p{u}_h,p_h]}_h &\leq \norm{\p{\Pi}[\p{u},p]-[\p{u}_h,p_h]}_h+\norm{\p{\Pi}[\p{u},p]-[\p{u},p]}_h 
	\\ & \leq \frac{\mu_b}{\mu_c} \, \norm{\p{\Pi} [\p{u},p]-[\p{u},p]}_{h,\star} + \norm{\p{\Pi}[\p{u},p]-[\p{u},p]}_h  \\
	& \leq  \frac{\mu_b}{\mu_c} \,   C_{\p{\Pi},1}  \big(h_T^{s_1-1}+ h_S^{s_2-1}+ h_T^{-0.5}( h_S^{s_2}+h_S^{s_3-1}) \big) \ \mathcal{C}_{l_1,l_2}(\p{u},p) \\ & \hspace{0.5cm} +  C_{\p{\Pi},2} \, (h_T^{s_1-1} + h_S^{s_2-1}+h_S^{s_3-1})  \ \mathcal{C}_{l_1,l_2}(\p{u},p) \\
	& \leq C \,  \big(h_T^{s_1-1}+ h_S^{s_2-1}+ h_T^{-0.5}( h_S^{s_2}+h_S^{s_3-1}) \big) \ \mathcal{C}_{l_1,l_2}(\p{u},p), 		
	\end{align*}
	for some constant $C$, e.g. $C=\big(\frac{\mu_b}{\mu_c} C_{\p{\Pi},1}+ C_{\p{\Pi},2} \big)$ if $h_T \leq 1$. 
\end{proof}

How does this convergence result compare with the vertical method of lines? In the latter case, the 
spatial discretization leads to an ODE or DAE in time which is then solved by specific integration schemes.
In Theorem \ref{th:conv1}, the spatial and temporal influences show up in   terms of the type $h_S^{a}$,  $h_T^{a}$ respectively. If we assume $h_S$ of order $\mathcal{O}(h_T)$ and require a regular exact solution then the error w.r.t. the norm $\norm{\cdot}_{h}$ is of order $\mathcal{O}(h_T^{r_T}+h_S^{r_p}+h_T^{-0.5}(h_S^{r_p+1}+h_S^{r_u}))= \mathcal{O}(h_T^{r_T}+h_S^{r_p}+h_S^{r_u-0.5})$. 
Error estimates for the method of lines typically have the form $ \textup{error} \leq \textup{const.} \cdot  \big( \Delta t^{m} + h_S^k \big)$, where $m$ denotes the convergence order of the time integrator, $\Delta t$ the step size and $k$ the approximation order of the spatial discretization.

From a theoretical point of view it is possible to obtain high convergence orders in both ways. But for the space-time method increasing the convergence order can be achieved by raising polynomial degrees, which becomes very efficient in the context of IGA. 
Variable order multistep formulae also allow to do this, in particular the BDF, while
for implicit and linear-implicit one-step methods such as Runge-Kutta,  the order is fixed. Besides, for multistep methods  the choice of good starting values is of relevance and order reduction problems might occur for Runge-Kutta type algorithms; compare Remark 6 and 7 in \cite{FU}.

The case of the continuous P1-spline discretization in time in the space-time method deserves a further remark. For illustration, we apply the method based on the time-upwind test functions introduced in \cite{para}, which is similar to our discretization
of the Biot system,  to the semi-discretized homogeneous heat equation $\p{M}_h \partial_t{\p{w}} = \p{A}_h\p{w}$. 
Here we have an  spd mass matrix $\p{M}_h$ and a semi-definite right-hand side matrix $\p{A}_h$. More precisely, we make the ansatz $\p{w}=\sum_k  \p{w}_k \ \phi_k(t) $, where $\phi_k$ are P1-splines, and multiply the system by test functions $ \big( \phi_k(t)+\Delta t \partial_t \phi_k(t) \big) \p{e}_i  $, where $\p{e}_i$ denotes the $i$-th canonical basis vector, and integrate w.r.t. time. Instead of a simultaneous space-time discretization, we split the procedure thus into two discretization steps and obtain, after some straightforward computations, the recursion formula
\begin{equation}\label{eq:mstanalogy}
\p{M}_h \left( -\frac{3}{4}\p{w}_{k-1} + \p{w}_k - \frac{1}{4}\p{w}_{k+1} \right)
=   \Delta t \p{A}_h \left(\frac{1}{3} \p{w}_{k-1}  + \frac{1}{3} \p{w}_{k} -  \frac{1}{6} \p{w}_{k+1}\right)
\end{equation}
for the time steps $k=1,2,\ldots$. We observe the structure of an implicit 2-step method, and the consistency order is readily checked to be $m=3$. The difference to a classical multistep approach that would proceed step by step lies in the first step $k=1$ where both $\p{w}_1$ and $\p{w}_2$ are unknowns as there is no starting procedure for $\p{w}_1$. 
If we arrange all time steps in a large linear system, it will  have a tridiagonal staircase block structure that reflects the propagation of information.
Thus, the P1-spline temporal discretization is inappropriate for a simple sequential processing in time. The same reasoning applies to the full space-time method for the Biot system. It is thus natural to treat the  fully discretized system en bloque
by suitable sparse direct or iterative methods that in the end take implicitly advantage of the staircase structure.

Finally, the   factor $h_T^{-0.5}$ in the estimate \eqref{convergence_equation} suggests an order reduction effect. 
Our numerical experiments below, however, did not reveal this, which indicates that our estimate might be too pessimistic.


\section{Numerical examples}
\label{sec:numerical_examples}
In this section we focus on numerical examples for validating the convergence behaviour, but also instability issues are considered.
The numerical experiments are performed by means of the powerful
GeoPDEs package \cite{geopdes,geopdes3.0}, which is a 
MATLAB \cite{MATLAB:2019} implementation of IGA.
We write IG-ST as an abbreviation for the space-time method introduced above. Further, the following test examples also have been computed by means of a method-of-lines ansatz, namely a spatial isogeometric discretization combined with a BDF time-stepping. The obtained results are not documented here, but were used to check the plausibility of the IG-ST solutions.
The overall linear system of the space-time method is solved by MATLAB's sparse direct solver. This means that we concentrate
here on checking convergence and treat the linear algebra as a black box. Of course, there is room for substantial adaptation and tuning in this regard.  

\subsection{Numerical convergence analysis} 
\label{subsec:numerics_convergence}
By manufacturing the right-hand sides, we construct the smooth strong solution 
\begin{align}
u_1(\p{x},t)&=\sin( \pi \, x ) \sin(\pi \, y) \, \sin( \pi \, t),  \qquad
u_2(\p{x},t)=\sin( \pi \, x ) \sin(\pi \, y) \, (\exp(t)-1),  \label{eq:exact_sol1} \\
p(\p{x},t)&=\sin( \pi \, x ) \sin(\pi \, y) \,\sin(0.5 \, \pi \, t) ,  \ \ \   (x,y)\coloneqq \p{x} , \nonumber
\end{align}
of the Biot system where the spatial domain is  the unit square and the  time interval is $[0,T]=[0,1]$. Hence the space-time 
cylinder $\mathcal{Q}$ is the three-dimensional unit cube. 
Uniform meshes $\mathcal{M}_h$ are obtained by dividing the cube into equally smaller cubes with edge lengths $h=2^{-k}, \ k=1, \dots,5$.  In other words we ignore here the difference between spatial and temporal mesh sizes and have  $h=h_T=h_S$. We assume homogeneous initial-boundary conditions on the surface of the space-time cylinder. 
For the coefficients and parameters, respectively, we set  $c_0=1 , \ \lambda=1, \ \mu=1, \ \p{\mathcal{K}}= \p{I}, \ b=1 \hspace{0.2cm}.$ 
For the underlying discrete NURBS spaces we use simple inner knots, and in order to save  computational costs we apply basis functions which are also $r_T-1$ times continuously differentiable in time.

First we choose for the polynomial degrees a mixed ansatz in space, namely $r_p=r_T$ and $r_u=r_p+1$. Due to  regularity of the strong solution and the choice of the boundary conditions, we expect the error in the norm $\norm{\cdot}_h$ to be of order $\mathcal{O}(h^{r_p}+h^{r_u-0.5})$; see  Theorem \ref{th:conv1}. We compute for the different meshes $\mathcal{M}_h , \ h=1/2, \dots, 1/32$ and the polynomial degrees $r_p=1,2,3$ the errors w.r.t. the $\norm{\cdot}_h$ norm. The result is shown in the Fig. \ref{fig:conv} (a) above.
\begin{figure}[h]
	\begin{center}		
		\begin{minipage}{5.9cm}
			\begin{center}
				\includegraphics[scale=0.46]{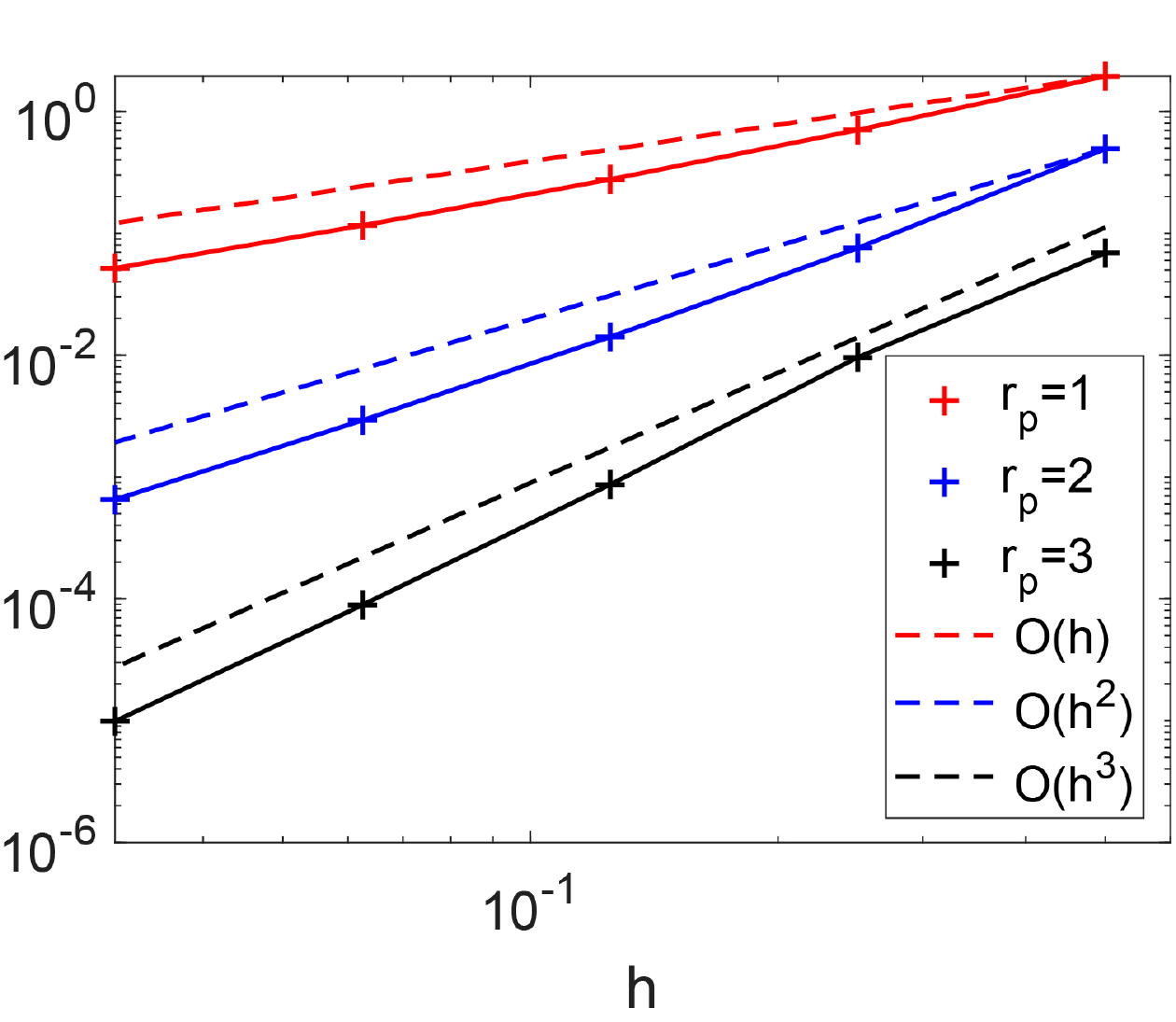}
				{ \small (a) \ $r_p=r_T, \ r_u=r_p+1$ and $c_0=1$. }
			\end{center}		
		\end{minipage}
		\hspace{0.5cm}
		\begin{minipage}{6cm}
			\begin{center}
				\includegraphics[scale=0.46]{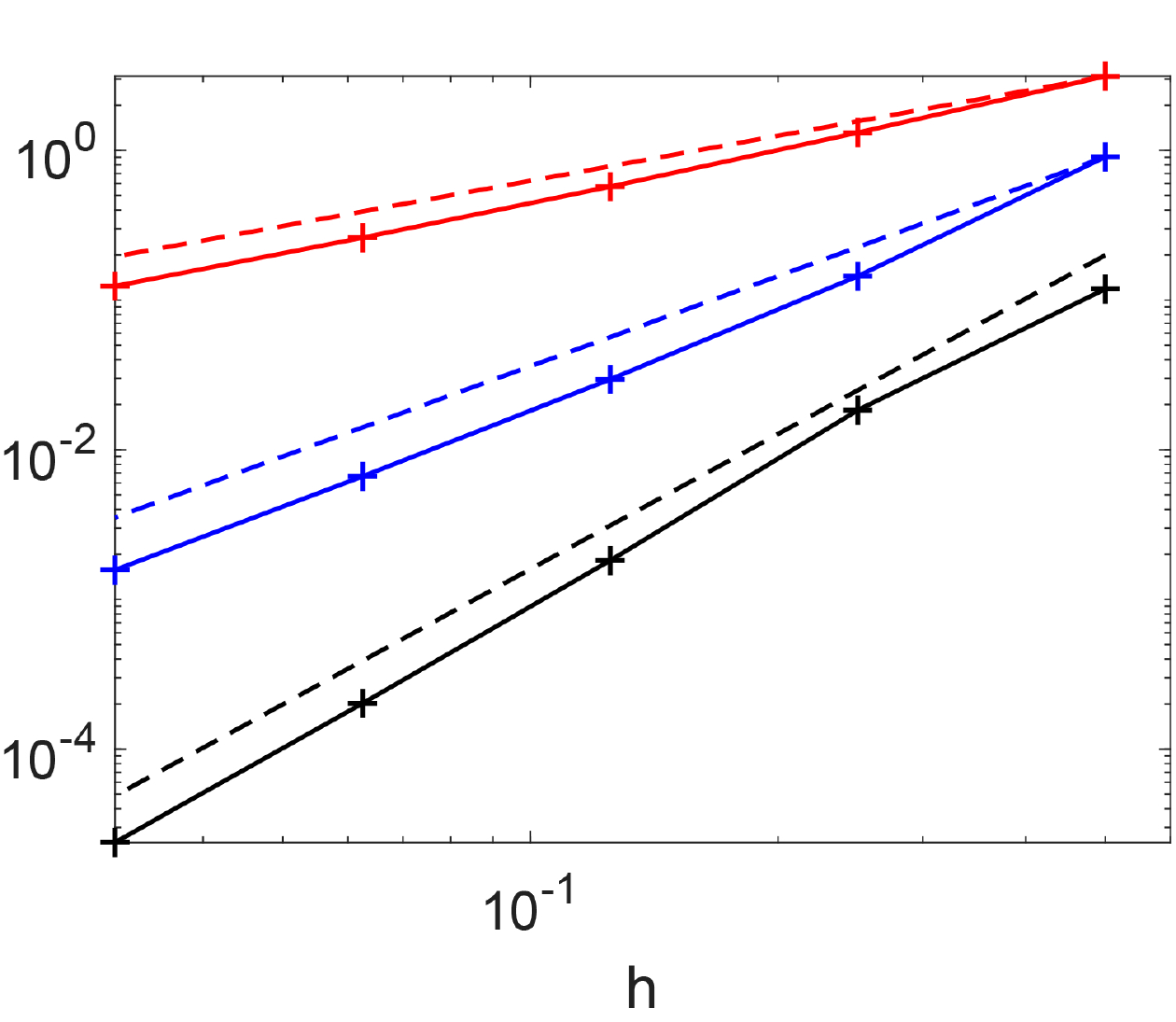}
			{ \small (b)  \ $r_p=r_T=r_u$ and $c_0=1$. }
			\end{center}		
		\end{minipage}		
		\begin{minipage}{5.9cm}
			\begin{center}
				\includegraphics[scale=0.46]{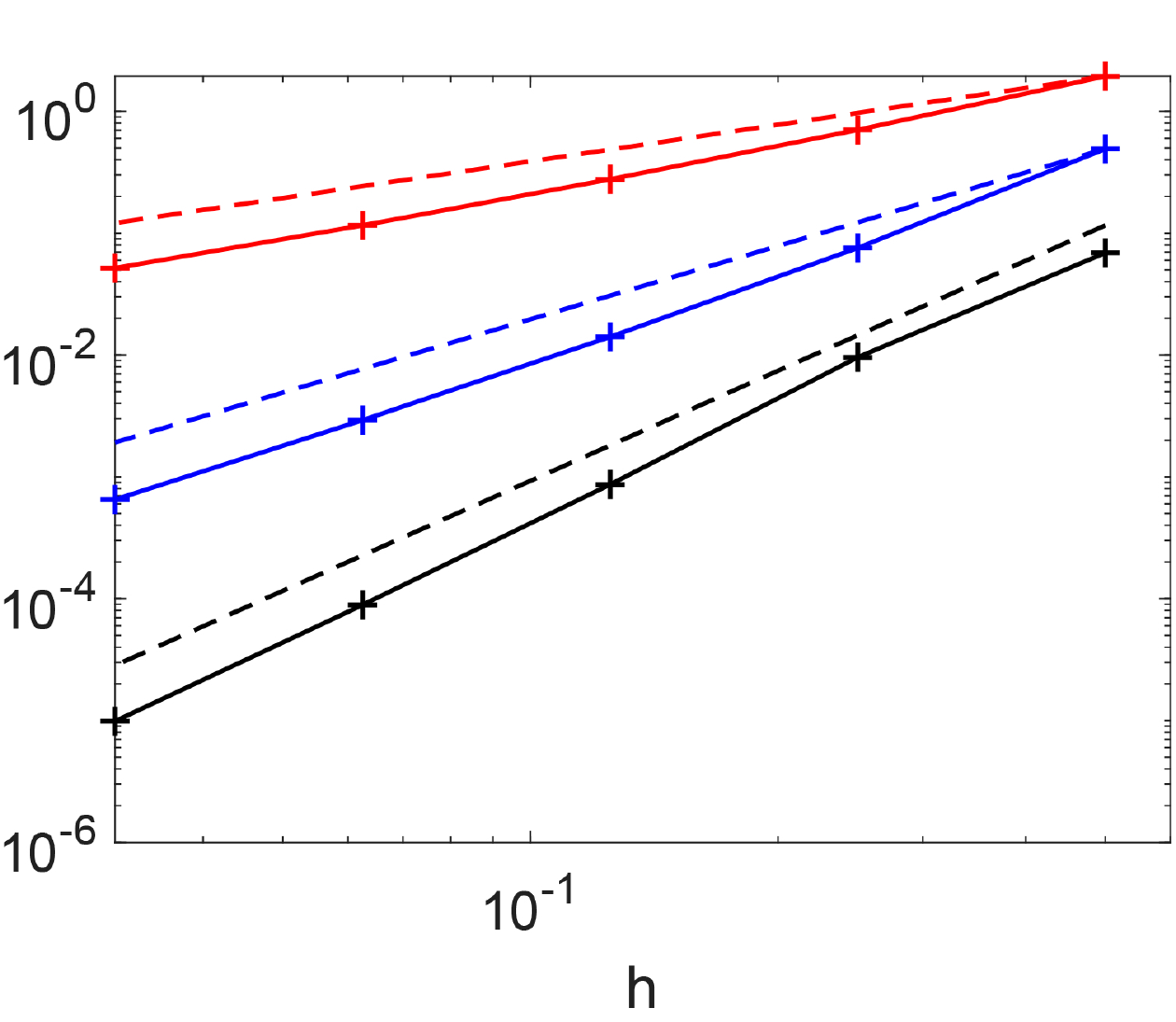}
				{ \small (c) \  $r_p=r_T, \ r_u=r_p+1$ and $c_0=0$. }
			\end{center}		
		\end{minipage}
		\hspace{0.5cm}
		\begin{minipage}{6cm}
			\begin{center}
				\includegraphics[scale=0.46]{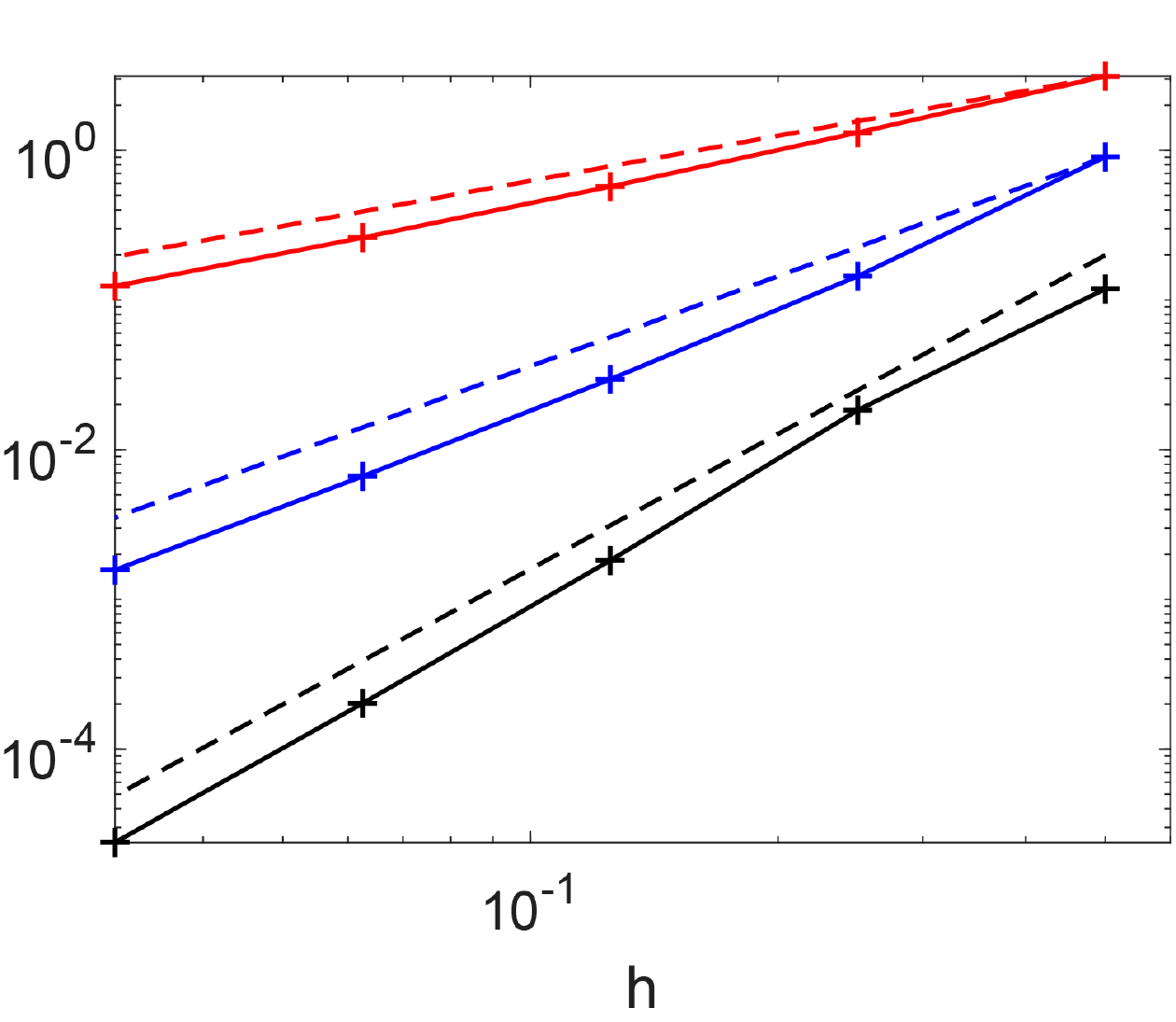}
				{ \small (d) \  $r_p=r_T=r_u$ and $c_0=0$. }	
			\end{center}		
		\end{minipage}	
		\caption{\small Errors in the norm $\norm{\cdot}_h$ for mixed and non-mixed polynomial degrees as well as for the two cases $c_0=1$ and $c_0=0$.}
		\label{fig:conv}
	\end{center}
\end{figure}

Additionally  we display in the Fig. \ref{fig:conv} (b) the computed $\norm{\cdot}_h$-norm  errors, but  with  equal degrees  $r_p=r_T=r_u$ for $r_p=1,2,3$. One observes a similar convergence behaviour like for the mixed-degrees case although our convergence estimate indicates an order reduction. Thus one might conjecture that the convergence estimates are not yet optimal.
To demonstrate the convergence behaviour for the extreme case $c_0=0$, we display in the Figures  \ref{fig:conv} (c)-(d)  the $\norm{\cdot}_h$-norm errors for the above test problem with $c_0$ set to zero.

\subsection{Terzaghi's problem and problem of Barry and Mercer}

\label{subsubsection:code_eval}
Next we give two examples from the literature for which an analytical solution is known.
Terzaghi's problem is a one-dimensional model with analytical pressure solution that describes the coupling of the fluid pressure and deformation   of   a porous medium pipe completely filled with some fluid, if one end is fixed and at the other end a uniform normal  surface load $\p{F}=(F,0)$  is applied; see Fig. \ref{fig:Terzaghi_pipe}.  We require the displacement and fluid flow to be  restricted parallel to the pipe such that the problem can be reduced to a one-dimensional setting.
The governing equations  are given by the 1D Biot system where the physical domain $\Omega$ reduces to some interval $\Omega=(0,L)$ and the permeability tensor is given by  some positive constant $k$.
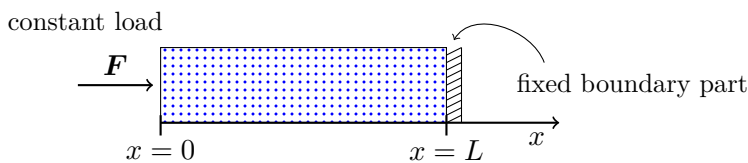
\begin{figure}[h]
	\begin{center}
		\begin{tikzpicture}
		\path [pattern=dots, pattern color=blue] (0.1,-0.5) rectangle (3.9,0.5);
		\draw (0.1,-0.5) rectangle (3.9,0.5);
		\draw[thick, ->] (0.1,-0.5) to (5.4,-0.5);
		\draw[thick] (0.1,-0.67) to (0.1,-0.4) ;
		\draw[thick] (3.9,-0.67) to (3.9,-0.4) ;
		\draw(4.1,-0.5)to (4.1,0.5);
		\draw (3.9,-0.5) to (4.1,-0.4);
		\draw (3.9,-0.4) to (4.1,-0.3);
		\draw (3.9,-0.3) to (4.1,-0.2);
		\draw (3.9,-0.2) to (4.1,-0.1);
		\draw (3.9,-0.1) to (4.1,-0);
		\draw (3.9,-0) to (4.1,0.1);
		\draw (3.9,0.1) to (4.1,0.2);
		\draw (3.9,0.2) to (4.1,0.3);
		\draw (3.9,0.3) to (4.1,0.4);
		\draw (3.9,0.4) to (4.1,0.5);
		\node[below] at (5.1,-0.5) {$x$};
		\draw[thick , ->] (-1,0) to (0,0);
		\node[above] at (-0.5,0) {$\p{F}$};
		\node[below] at (0.1,-0.6) {$x=0$};
		\node[below] at (3.9,-0.6) {$x=L$};
		\draw[<-, out=60, in=110] (4,0.6) to (5.2,0.3);
		\node[right] at (4.7,0) {\small fixed boundary part};
		\node[above] at (-0.9,0.6) {\small constant load};
		\end{tikzpicture}
	\end{center}
	\caption{ \small Terzaghi's problem: Saturated porous medium deformed by some loading.}
	\label{fig:Terzaghi_pipe}
\end{figure} 
The boundary and initial conditions are set to
\begin{align*}
p=0, \ \  \p{t}_n=F, \ 0<F<\infty   \hspace{1cm} & \textup{at } \ x=0, \\ 
\partial_x p=0, \ \ u=0 \hspace{1cm} & \textup{at } \ x=L, \ 0<L<\infty, \\
p(t=0)=u(t=0)=0. \hspace{0.9cm} &
\end{align*}
The exact pressure solution can be found in \cite{phillips}.  
We set the parameters to $c_0=0.2 , \ \lambda=1, \ \mu=1, \ k=0.2, \ b=1, \ \eta_f=1  \ \textup{and} \ F=L=1$,
and the space-time cylinder is  $\mathcal{Q}=(0,1) \times (0,2)$.  We use a uniform mesh with mesh sizes $h_T=0.01, \ h_S=0.05$ and polynomial degree $1$ w.r.t. each coordinate. In Fig. \ref{fig : Terzaghi1} (a) the numerical solution is displayed. The approximate values  fit to the exact solution quite well. One observes that the deviation is at maximum for $t=0.05$, which can be explained by the following
reasoning: The exact solution converges for $t \rightarrow 0$ pointwise to some discontinuous function $x \mapsto p_0 \, \chi_{(0,1]}(x), \ (\chi \ \textup{indicator function})$. But the IG-ST method is used with zero initial conditions and yields only globally continuous solutions. Thus the non-smooth limit behavior can not be reproduced by the IG-ST method and
the deviation between exact and numerical solution grows for $t\rightarrow 0$, as exemplified in Fig. \ref{fig : Terzaghi1} (b) for $t=0.01$. 
\begin{figure}[h]
	\begin{minipage}{7.08cm}
		\begin{center}
			\includegraphics[scale=0.5]{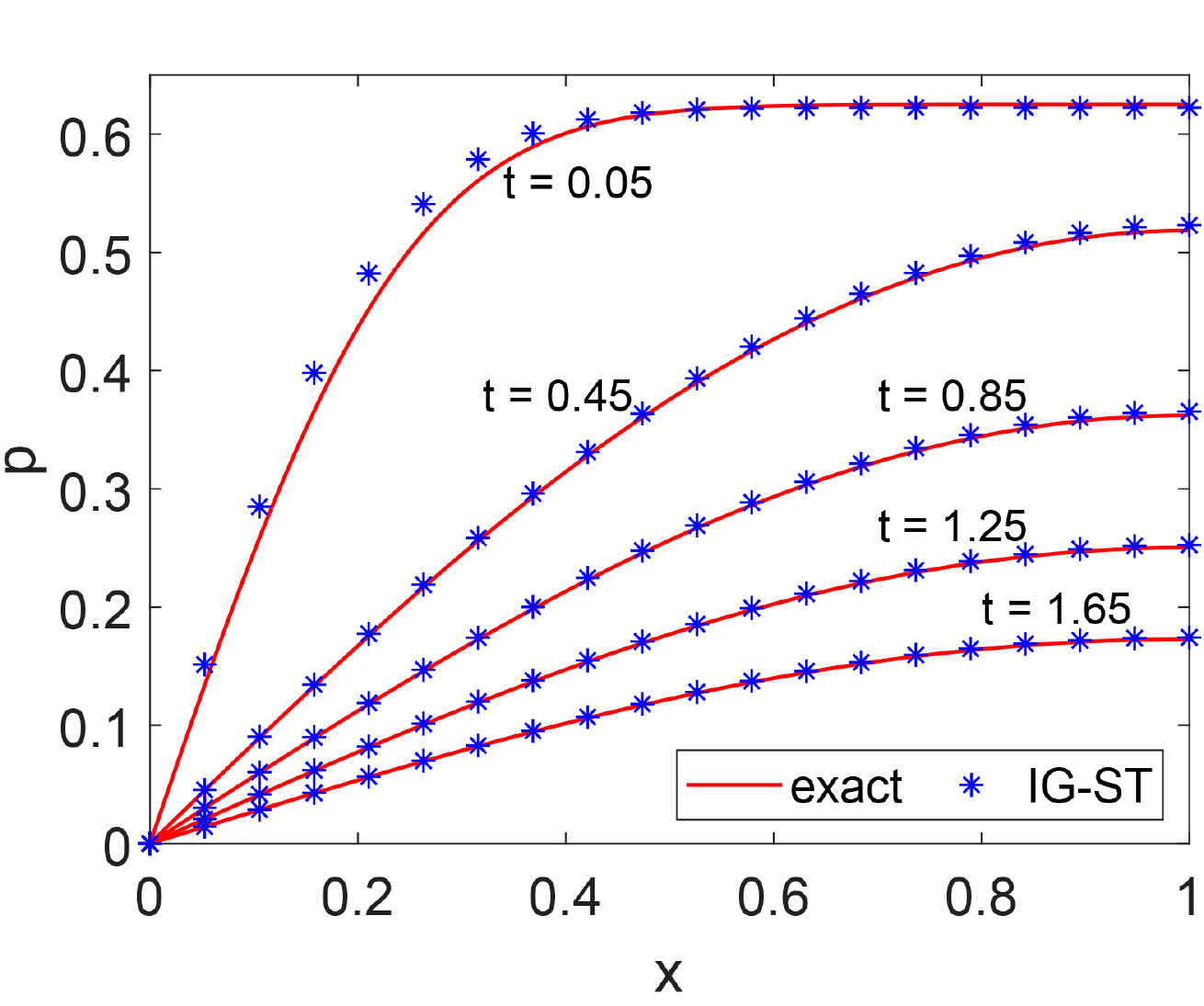}
			{ \small \ \ \  (a) Results for $h_T = 0.01, \ h_S = 0.05$ . }
		\end{center}
	\end{minipage}
	\hspace{0.9cm}
	\begin{minipage}{7.1cm} 			                    
       \begin{center}
       	 \includegraphics[scale=0.5]{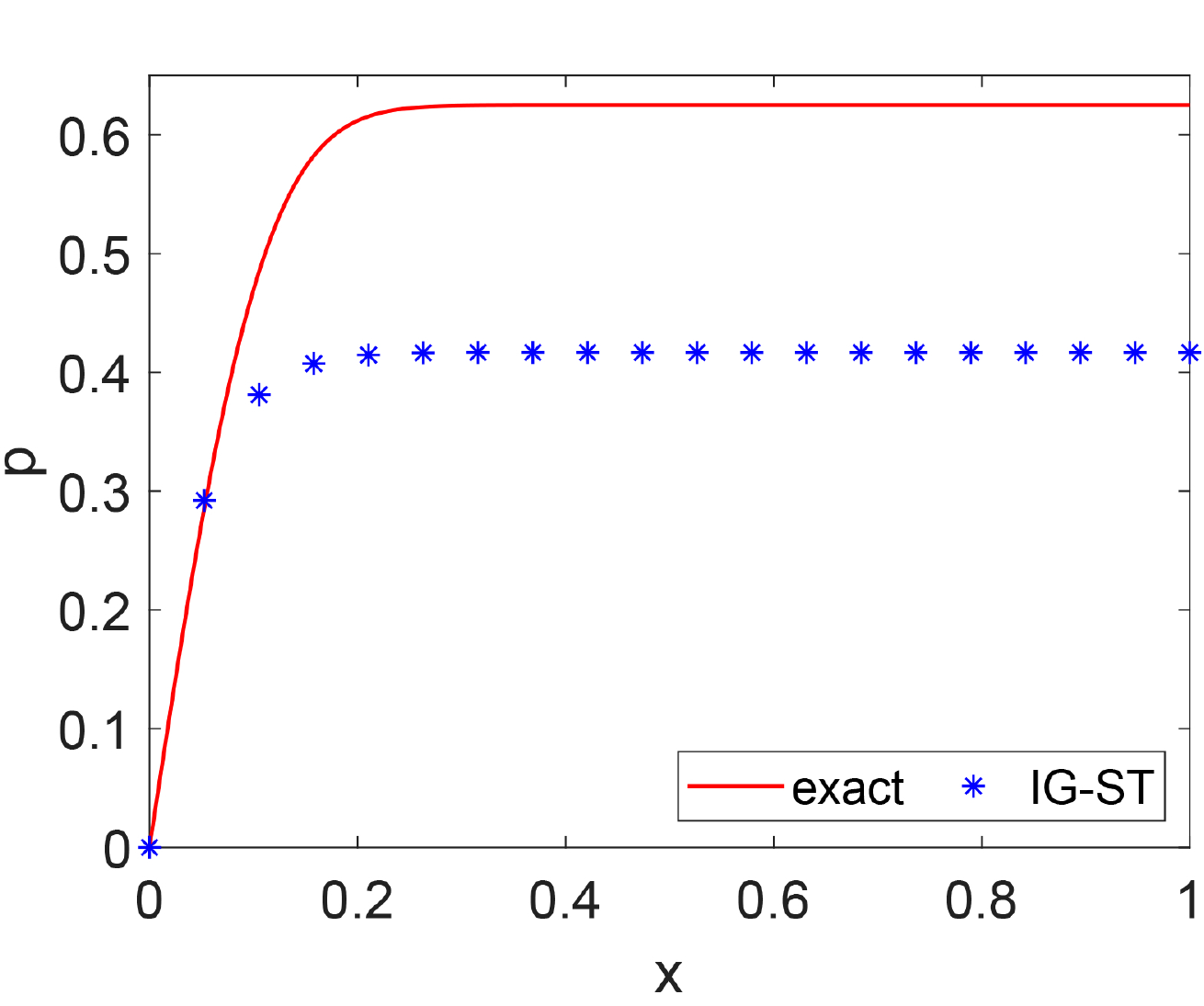}
       	 { \small (b) Early time solution at $t=0.01$ .}
       \end{center}
	\end{minipage}
	\caption{ \small Comparison between exact solution of the Terzaghi test problem and the approximate solutions. The deviation grows for $t \rightarrow 0$.}
	\label{fig : Terzaghi1}
\end{figure}
This drawback can be alleviated by using finer meshes. 

We proceed with  a two-dimensional test problem, the problem of Barry and Mercer, which is taken from \cite{FU}. This problem  describes the pressure and displacement in a rectangular porous medium under the influence of an oscillating fluid point source. Though it has   an analytical solution, it is  only available in the form of   infinite double series. And the source term is actually given by a distribution and not by a function. This implies the necessity to approximate the source term by a proper function.  

More precisely we have the setting $\Omega=(0,1)^2$ and the only source distribution is $g( \p{x},t)= 2 \, \beta \, \delta(\p{x}-\p{x}_0) \,  \sin(\beta t)$, where $\delta(\p{x}-\p{x}_0)$ denotes the Dirac delta distribution at $\p{x_0} \coloneqq (x_{0},y_{0})$ and $\beta= (\lambda+2\mu) k $. Further we use homogeneous Dirichlet boundary conditions for the pressure variable and a mixture of homogeneous Neumann and Dirichlet  boundary conditions for the displacement  variables;  see Fig. \ref{fig:Boundary_Condition_Barry_and_Mercer}. The analytical solution of the problem is stated in   Section 4.2.1 in \cite{phillips}. 
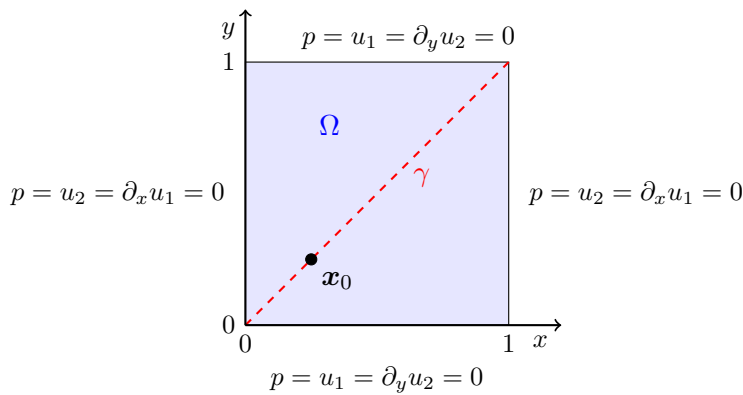
\begin{figure}[h]
	\begin{center}
		\begin{tikzpicture}[scale=1.4]
		\draw[fill=blue,opacity=0.1] (0,0) rectangle (2.5,2.5);
		\draw (0,0) rectangle (2.5,2.5);
		\draw[dashed,thick,red] (0,0) -- (2.5,2.5);
		\draw[fill] (0.625,0.625) circle (1.5pt);
		\node[below] at (1.25,-0.3) { \small $p=u_1= \partial_{y}u_2=0$}; 
		\node[above] at (1.55,2.5) { \small $p=u_1= \partial_{y}u_2=0$}; 
		\node[left] at (-0.1,1.25) { \small $p=u_2= \partial_{x}u_1=0$}; 
		\node[right] at (2.6,1.25) { \small $p=u_2= \partial_{x}u_1=0$}; 
		\node[right] at (0.625, 0.425) {$\p{x}_0$};
		\node[thick,blue] at (0.8,1.9) {$\Omega$};
		\draw[thick,->] (0,0) to (3,0);
		\draw[thick,->] (0,0) to (0,3);
		\node[right,red] at (1.5,1.4) {$\gamma$};
		\node[below] at (0,0) { \small $0$};
		\node[left] at (0,2.5) {\small $1$};
		\node[left] at (0,2.8) {\small $y$};
		\node[below] at (2.5,0) {\small $1$};
		\node[below] at (2.8,0) {\small $x$};
		\node[left] at (0,0) { \small $0$};
		\end{tikzpicture}
	\end{center}
	\caption{ \small Boundary conditions for the problem of Barry and Mercer.}
	\label{fig:Boundary_Condition_Barry_and_Mercer}
\end{figure}
The corresponding parameters, taken from  \cite{FU}, are
$c_0=0 , \ \lambda=10^4/0.88, \ \mu=10^5/2.2 , \ \p{\mathcal{K}}=k \cdot \p{I}, \ k= 0.01, \ b=1 \ \textup{and} \  \p{x}_0=(0.25,0.25) .$
Note that the approximation of the Dirac delta is realized in the following way.
We partition  $\Omega$ into equal squares and set  the edge length  $h$ of the squares  always such that $\p{x}_0$ is the center of one square $S_h$. Using this we approximate $\delta(\p{x}-\p{x}_0)$ by 
$\delta_h(\p{x}-\p{x}_0) =h^{-2} \ \textup{if } \ \p{x} \in S_h $ and $\delta_h(\p{x}-\p{x}_0)=0$ else.
As spatial mesh sizes we consider $h_S=1/34$ and $h_S=1/66$,  \rm{i.e.}, a coarser  and a finer mesh. The underlying polynomial degrees are one. Then we choose as space-time cylinder  $\mathcal{Q}=(0,1)^2 \times (0,\frac{3 \pi}{2 \, \beta})$ with mesh size in time  $h_T=\frac{3 \pi}{36 \, \beta}$. One notes that the spatial mesh size is much larger than the temporal mesh size. Hence the distinction between spatial and temporal step size for the space-time variational formulation seems to be reasonable.  

We compare the numerical solution with a reference solution which is  actually very close to the analytical one and  hence suitable for comparisons and we refer to it as exact solution.  For reasons of comparability we plot the exact and numerical solutions  of the pressure $p$ and displacement $u_1$ along the diagonal line $(0,0)-(1,1)$ (see $\gamma$ in Fig. \ref{fig:Boundary_Condition_Barry_and_Mercer}) of the domain at the times $t_1 = 0.5 \pi /\beta$ and $t_2 = 1.5 \pi /\beta$ in Fig. \ref{fig:Barry_Mercer_4}.
The approximate and reference solutions of the displacement in $x$ direction match quite well. The pressure solution deviates near the point source $\p{x}_0$ due to the coarseness of the mesh and the related approximation of the Dirac delta. For a finer spatial mesh the results are clearly better near $\p{x}_0$. 

\begin{figure}[h]
	\begin{center}
		\begin{minipage}{6.1cm}
	     \begin{center}
		\includegraphics[scale=0.46]{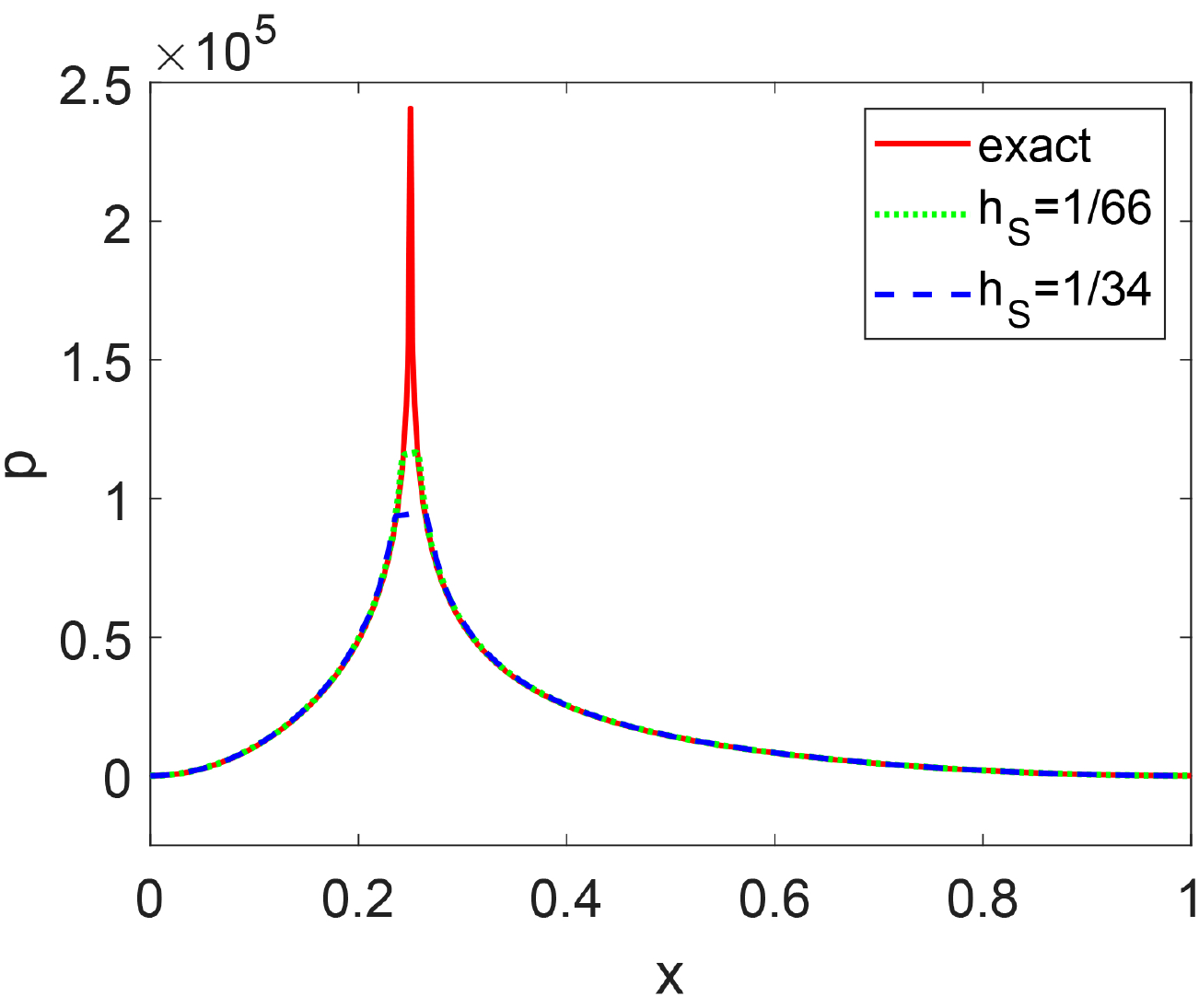}
		{ \small (a) \  Pressure at $t = 0.5 \, \pi/\beta$.}
	    \end{center}
		\end{minipage}
		\hspace{0.5cm}
		\begin{minipage}{6.1cm}
         \begin{center}
		   \includegraphics[scale=0.46]{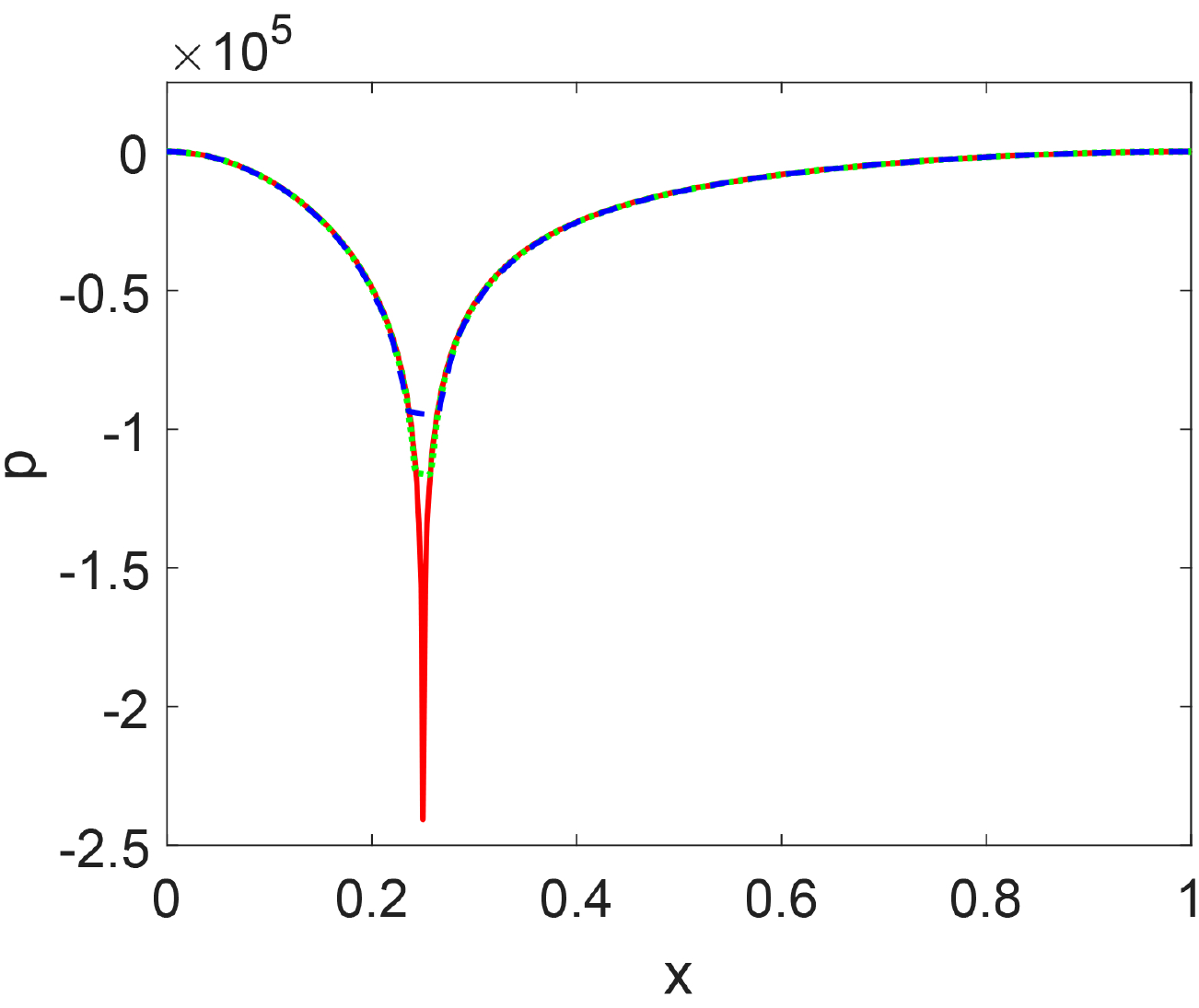}
	       { \small (b)  \ Pressure at $t = 1.5 \, \pi/\beta$.}
         \end{center}
		\end{minipage}		
		\vspace{0.2cm}
		\begin{minipage}{6.1cm}
			\begin{center}
				\includegraphics[scale=0.46]{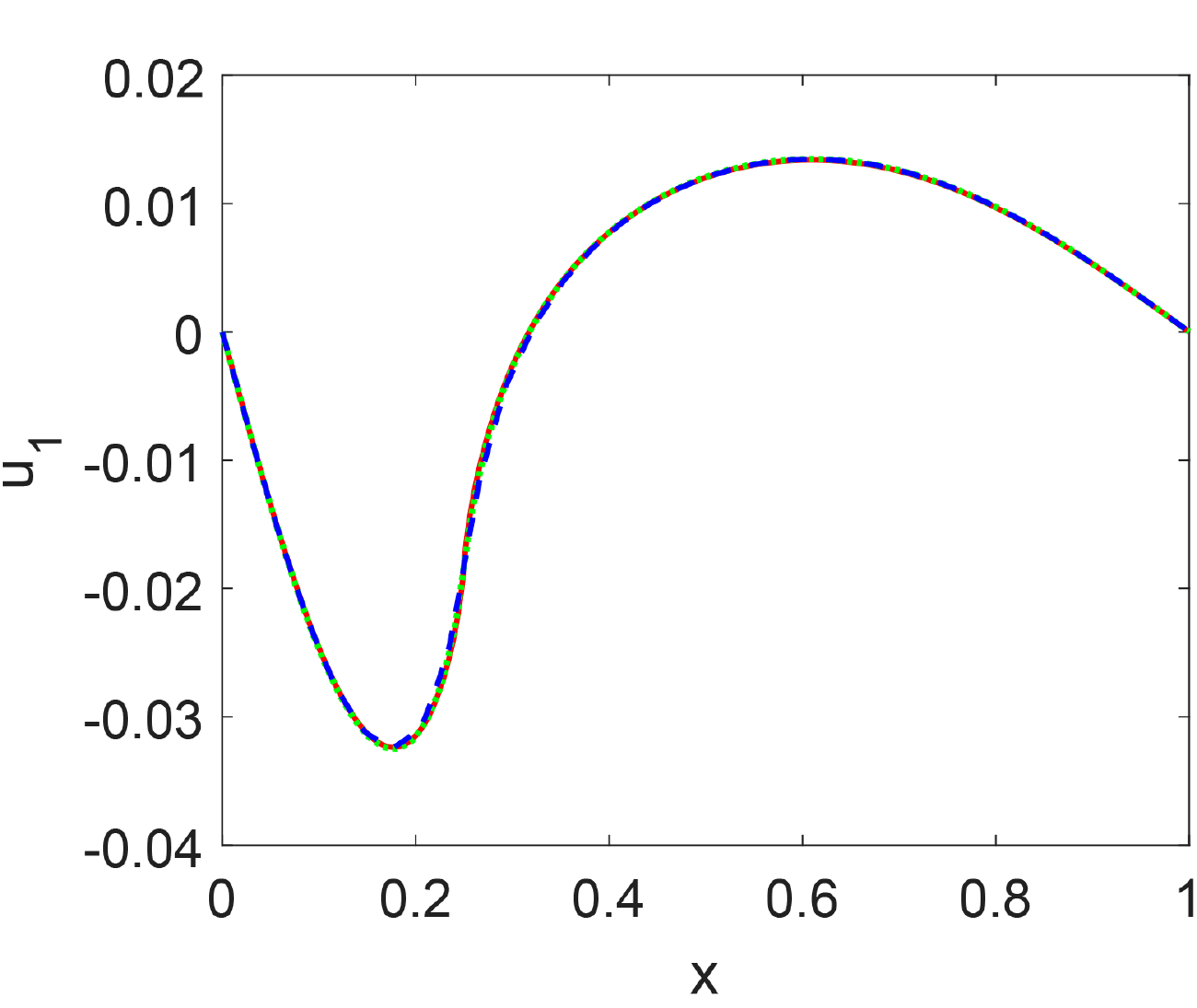}
				{ \small (c) \ $x$-displacement at $t = 0.5 \, \pi/\beta$.}
			\end{center}
		\end{minipage}
		\hspace{0.5cm}
		\begin{minipage}{6.1cm}
			\begin{center}
			\includegraphics[scale=0.46]{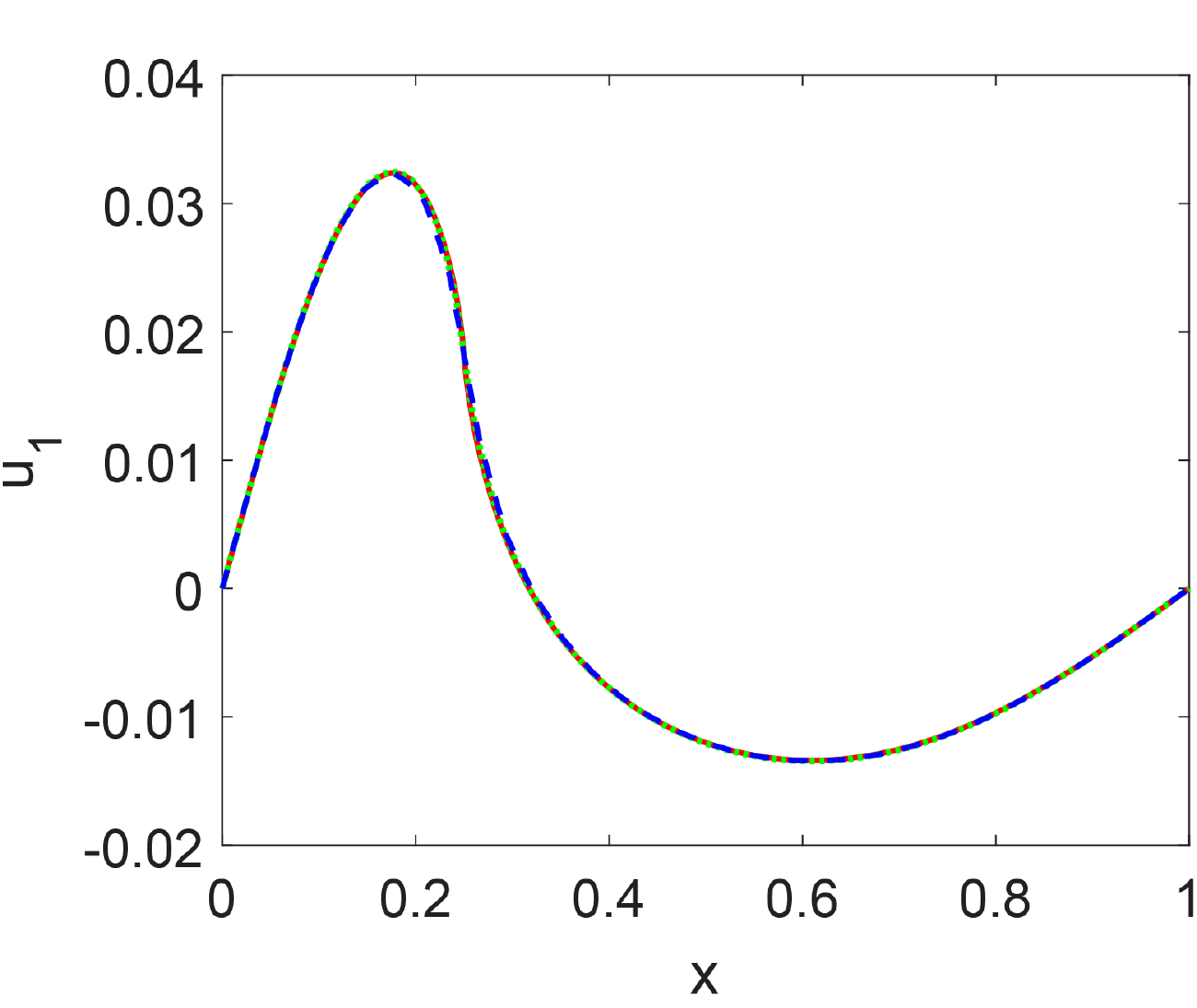}
			{ \small (d) \  $x$-displacement at $t = 1.5 \, \pi/\beta$.}
			\end{center}
		\end{minipage}				
	\end{center}
	\caption{ \small Comparison between exact solution and the numerical solutions along the diagonal line $\gamma$ from $(0,0)$ to $(1,1)$.}
	\label{fig:Barry_Mercer_4}
\end{figure}

\subsection{A 3D geometry with curved boundary}
\label{subsec:3D_example}
Next we demonstrate that our space-time method also works for 3D domains with curved boundary, which
underlines the advantages of an IGA approach. One notes the fact that for the 3D case the space-time cylinder is a four-dimensional object.
The crescent-shaped geometry with spatial mesh in Fig. \ref{fig : curved1} (a) is inspired by the porous structure of a human meniscus. From a biomedical viewpoint, the poor vascularization of the meniscus is one reason for premature osteoarthritis
in knee joints. On the other hand, the meniscus
tissue is highly hydrated (70-75\% water), and the frequent pressure changes during walking and running
are essential for the flow of nutrients and for fostering the regeneration capabilities.
We prescribe the following
parameter values to approximate the behaviour of such a fibro-cartilaginous material: $c_0=2.7 \cdot 10^{-10}, \ \lambda=472689,  \mu=183824, \p{\mathcal{K}}=1.5 \cdot 10^{-12} \cdot \p{I}, b=1$. The
method parameters are $r_p=r_u=2, \ r_T=1$ and $h_T = 1/18, \ T=0.5$. As boundary conditions we set the pressure to zero on the whole boundary except the flat bottom part of the meniscus, which can move in  horizontal directions but is fixed with respect to vertical movements. Both ends of the C-shaped domain are fixed, too, and a loading $$  \p{{\sigma}} \cdot \p{n}_x = f(t,z) \, (\frac{x}{\sqrt{x^2+y^2}},\frac{y}{\sqrt{x^2+y^2}},-1)^t,$$ with
$f(t,z)= 30000 \, \sin(\pi t) \, \sin \big( (1/0.0072) \, \pi\, z)$   is applied onto the upper surface.

\begin{figure}[h]
	\begin{minipage}{6.2cm}
		\includegraphics[scale=0.45]{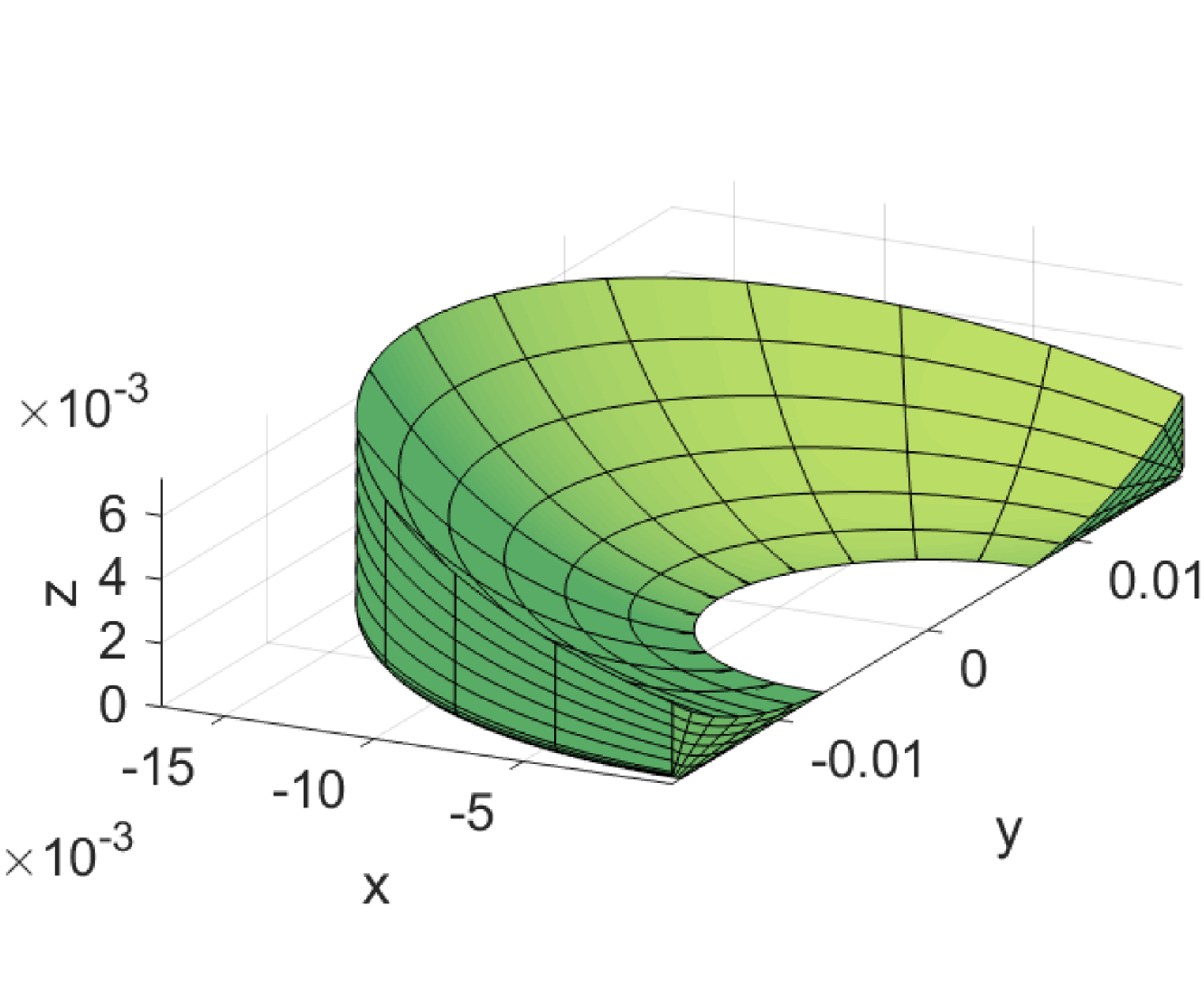}	
		{ \small \ (a) Spatial mesh of the 3D meniscus model.}
	\end{minipage}
	\hspace{1cm}
	\begin{minipage}{7.6cm} 		  
		\vspace{1.5cm}                  
		\includegraphics[scale=0.5]{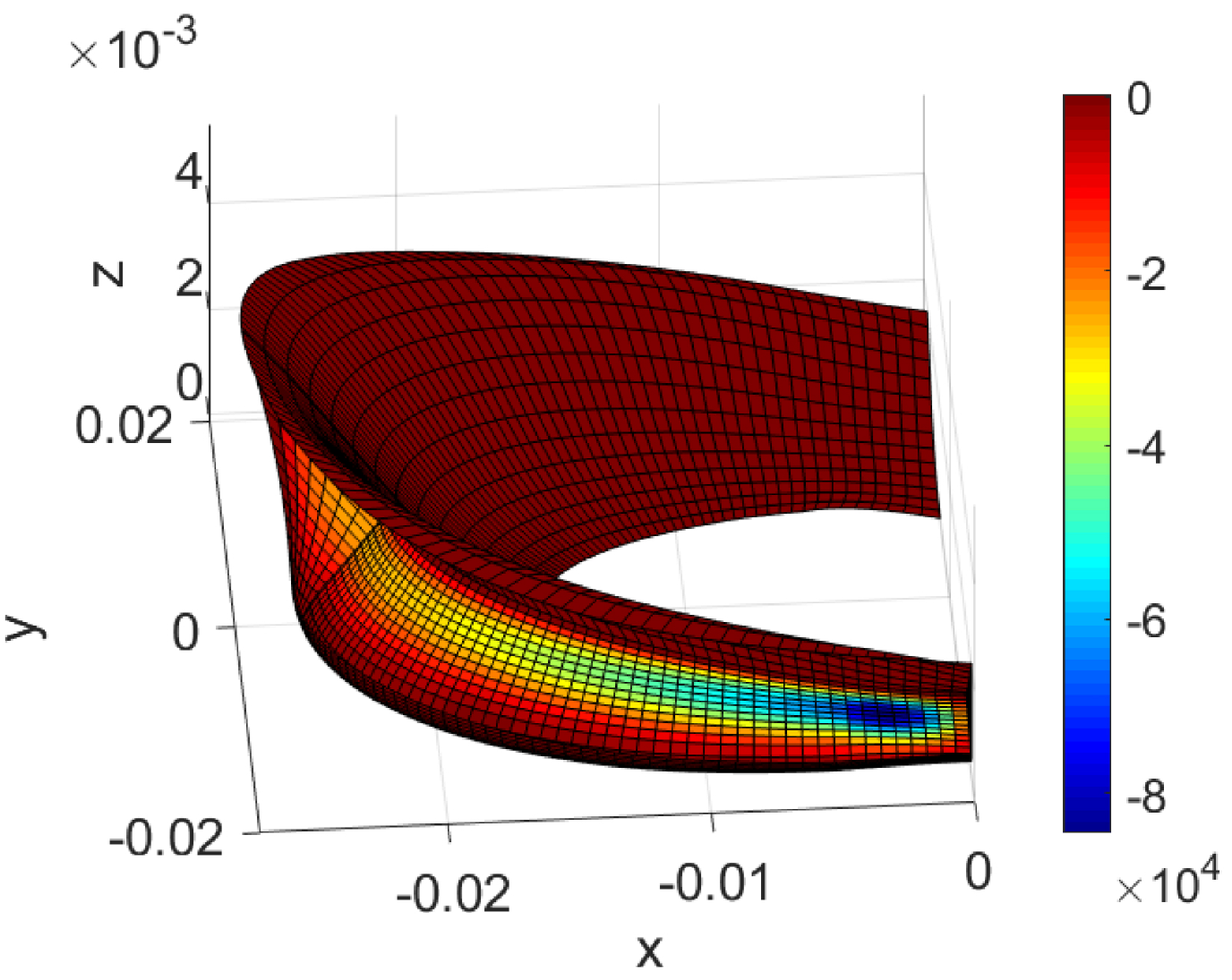}
		{ \small \ (b) Numerical pressure inside the deformed domain at the time $t = 0.5$ .}  \vspace{0.6cm} 
	\end{minipage}	
	\caption{ \small Mesh and numerical pressure for the case of a meniscus-type domain with curved boundaries.}
	\label{fig : curved1}
\end{figure}

Fig.~\ref{fig : curved1} displays the isogeometric mesh and a snapshot of the pressure distribution inside
the  fibro-cartilaginous material.

\subsection{Pressure oscillations and elastic locking}
\label{subsec:pressure_oscillations}
A major issue in solving Biot's equations is the occurrence of spurious pressure oscillations, mainly for low permeability, i.e., if $\norm{\p{\mathcal{K}}}<<1$. To simplify the discussion, we set
$\p{\mathcal{K}}=k \, \p{I}$ with constant $k$. Our numerical experiments show that especially small constrained specific storage coefficients along with low permeability may lead to a nonphysical behaviour.  To illustrate this, we plot in Fig.  \ref{fig :Terzaghi_osci}  approximate solutions to Terzaghi's problem  for small parameters $c_0=10^{-7}, \ k=10^{-7}$. As a result one 
observes oscillations despite the relatively fine spatial mesh ($h_S = 0.025$) for equal polynomial degrees $r_p=r_u=1$. These pressure oscillations are  well-known and can be handled by additional stabilization or discontinuous finite element methods.
There is also a connection to the locking effect in elasticity 
\cite{Braess}(Ch.~6). 
In the context of poroelasticity the pressure variable is more critical, but volumetric locking, i.e. the blocking of the displacements  in regions of low-compressible media, can be detected for  the Biot system, too.

The pressure variable can be stabilized by means of a mixed ansatz \cite{Biotmixed,on_the_cause_of_osci}. 
For standard IGA combined with implicit Euler in time,  one can show (using Theorem 5.2. in \cite{IGA3}) that  Taylor-Hood mixed spaces  satisfy  a Babu\v{s}ka-Brezzi inf-sup condition.  
We follow this idea of mixed spaces and increase the polynomial degree in the displacement variable by one. In
Fig.  \ref{fig :Terzaghi_osci} we show the result of above Terzaghi test case for $r_u=r_p+1$ in comparison with equal polynomial degrees. The Taylor-Hood ansatz results in an overshooting numerical solution, but  approximates the exact solution very precisely away from the  problematic boundary point $x=0$.
\begin{figure}[h]
	\begin{center}
		\begin{minipage}{7cm}
			\includegraphics[scale=0.5]{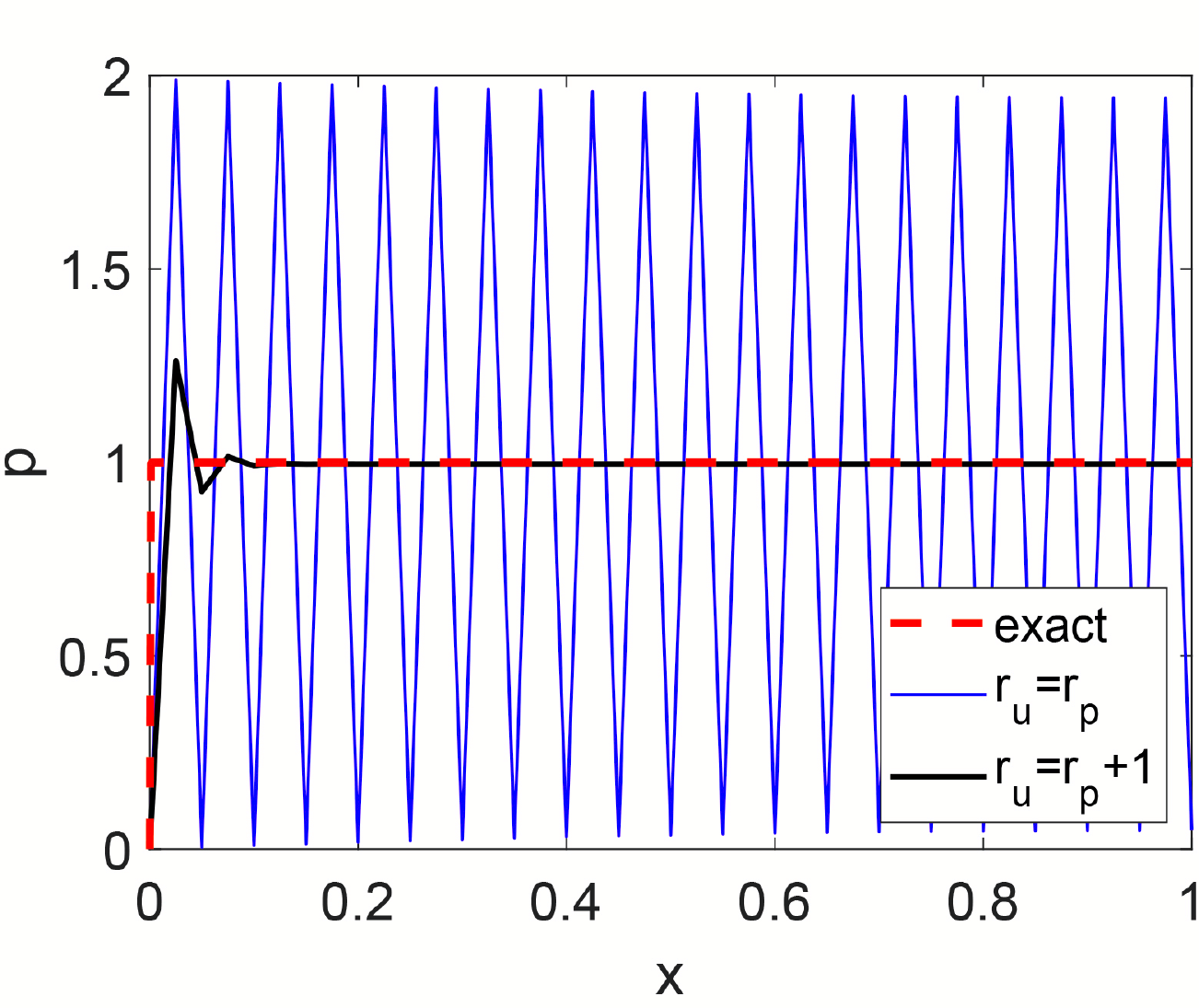}
			\caption{ \small   The Terzaghi problem with small permeability and storativity leads in the equal degree case to massive oscillations.}
			\label{fig :Terzaghi_osci}
		\end{minipage}
		\hspace{0.8cm}
		\begin{minipage}{7cm} 			                    
			\begin{tikzpicture}[scale=1.75]
			\draw[->,dashed] (0,0) to (2.33,0);
			\draw[->,dashed] (0,0) to (0,2.33);
			\draw[fill=gray,opacity=0.1] (0,0) -- (2,0) -- (2,2) -- (0,2) -- (0,0);
			\draw[fill=blue,opacity=0.1] (0,0.5) -- (2,0.5) -- (2,1.5) -- (0,1.5) -- (0,0.5);
			\node at (0.7,0.25) { \small $\lambda=1, \ k=1$};
			\node at (0.7,1.12) { \small $\lambda=1, $};
			\node at (0.7,0.88) { \small $k=10^{-8}$};
			\node at (0.7,1.75) { \small $\lambda=1, \ k=1$};
			\draw[thick,dashed] (1.5,0) to (1.5,2);
			\draw[blue, very thick] (2,0) to (2,2);
			\node[rotate = 90] at (1.65,1) {\small plot line};
			\draw[blue,very thick] (0,0) to (2,0);
			\draw[blue,very thick] (0,0) to (0,2);
			\draw[brown,very thick] (0,2) to (1,2);
			\draw[red,very thick] (1,2) to (2,2);
			\draw (1,2) to (1,2.1);
			\node[left] at (-0.1,2) {\small $1$};		
			\node[left] at (-0.1,0) { \small $0$};
			\node[left] at (0.03,2.14) { \small $y$};
			\node[below] at (2,-0.1) { \small $1$}; 
			\node[below] at (0,-0.1) { \small $0$};  
			\node[below] at (2.2,-0.05) { \small $x$};  
			\node[right] at (1.53,0.25) { \small $\gamma$};   
			\draw (-0.1,0) to (0,0);
			\draw (0,0) to (0,-0.1);
			\draw (-0.1,2) to (0,2);
			\draw (2,0) to (2,-0.1);
			\node[blue,left] at (2.92,0.9) {\small $ =\p{u} \cdot \p{n}_x$};
			\node[blue,left] at (2.52,0.7) {\small $ =0$};
			\draw[->,red,very thick] (1.35,2.6) to (1.35,2.1);
			\node[red,right] at (1.4,2.5) {\small $  \p{\sigma} \cdot \p{n}_x = - \p{n}_x  $};
			\node[blue,left] at (2.92,1.2) {\small $  \nabla_xp \cdot \p{n}_x $};
			\node[red,left] at (2.1,2.15) {\small $  p=0 $};
			\node at (-0.5,0.5) {$     $};
			\node[brown,left] at (0.93,2.5) {\small $   \p{\sigma} \cdot \p{n}_x =\p{0}, $};
			\node[brown,left] at (0.93,2.2) {\small $  p=0 $};
			\end{tikzpicture}
			\caption{ \small Boundary conditions and parameters for the low-permeable layer. For the second test case, i.e. low-compressible layer, the layer parameters are changed to $\lambda=10^{8}$ and $k=1$. }
			\label{fig: stability_test1}
		\end{minipage}
	\end{center}
\end{figure}  
Other numerical tests show further that using equal but higher polynomial degrees for both pressure and displacement 
is not really leading to a substantial improvement.  Hence a significant reduction of the oscillations without the need of very small mesh sizes can only be achieved with a mixed ansatz. 

We further illustrate  the effect of a mixed ansatz  also for the displacement variable by solving two test problems from \cite{on_the_cause_of_osci}. 
In both cases the  spatial domain is $\Omega=(0,1)^2$. 
First we place inside a  material with moderate parameters a low-permeable layer, and for the second case  we place there a low-compressible layer. More precisely, in the first case we have a region in which $k<<1$ is very small and in the other case we have analogously a large Lam\'{e} coefficient $\lambda >> \mu=1$. We set $c_0=0 , \mu=1  , \ b=1$; see also Fig. \ref{fig: stability_test1}. For the case with a low-compressible layer we change the parameter in the layer from $\lambda=1, \ k=10^{-8}$ to $\lambda=10^{8}, \ k=1.$

The first test case checks the reduction of pressure oscillations and the second one analyzes the elastic locking effect.  On the top edge of the domain  we apply a constant non-uniform normal load, namely $$\p{\sigma} \cdot \p{n}_x=\p{0}   \ \ \textup{for} \ x < 0.5  \ \   \ \textup{and} \  \ \ \p{\sigma} \cdot \p{n}_x=-\p{n}_x \ \  \textup{for} \ x \geq 0.5. $$  The computed solutions  at $t =1$ for zero  initial conditions and the two scenarios $1=r_p=r_u$  and $2=r_p+1=r_u$ are summarized in Fig.~\ref{fig :stability2}. Here we used a relatively fine uniform mesh with spatial mesh size $h_S=1/40$ as well as $h_T=0.2, \ r_T=1$ and plotted the solution along the line $(0.75,0)$  -  $(0,75,1)$. 
As already observed with Terzaghi's problem, we get a better result for the pressure solution and low-permeable layer if we use mixed polynomial degrees. The displacement is similar for the mentioned layer.  A look at the low-compressible layer case shows us  the smoothness of the pressure solution and the absence of oscillations. But the displacement variables differ for both cases. For degrees $r_p=r_u=1$ the displacement in the layer region is nearly constant. Consequently  the displacement is locked and we have the presence of elastic locking. But choosing $r_u=r_p+1=2$   the vertical displacement is more plausible. Thus the locking phenomenon is also damped  for mixed degrees. 
\begin{figure}[h]
	\begin{center}
		\begin{minipage}{7cm}
		\begin{center}
			\includegraphics[scale=0.5]{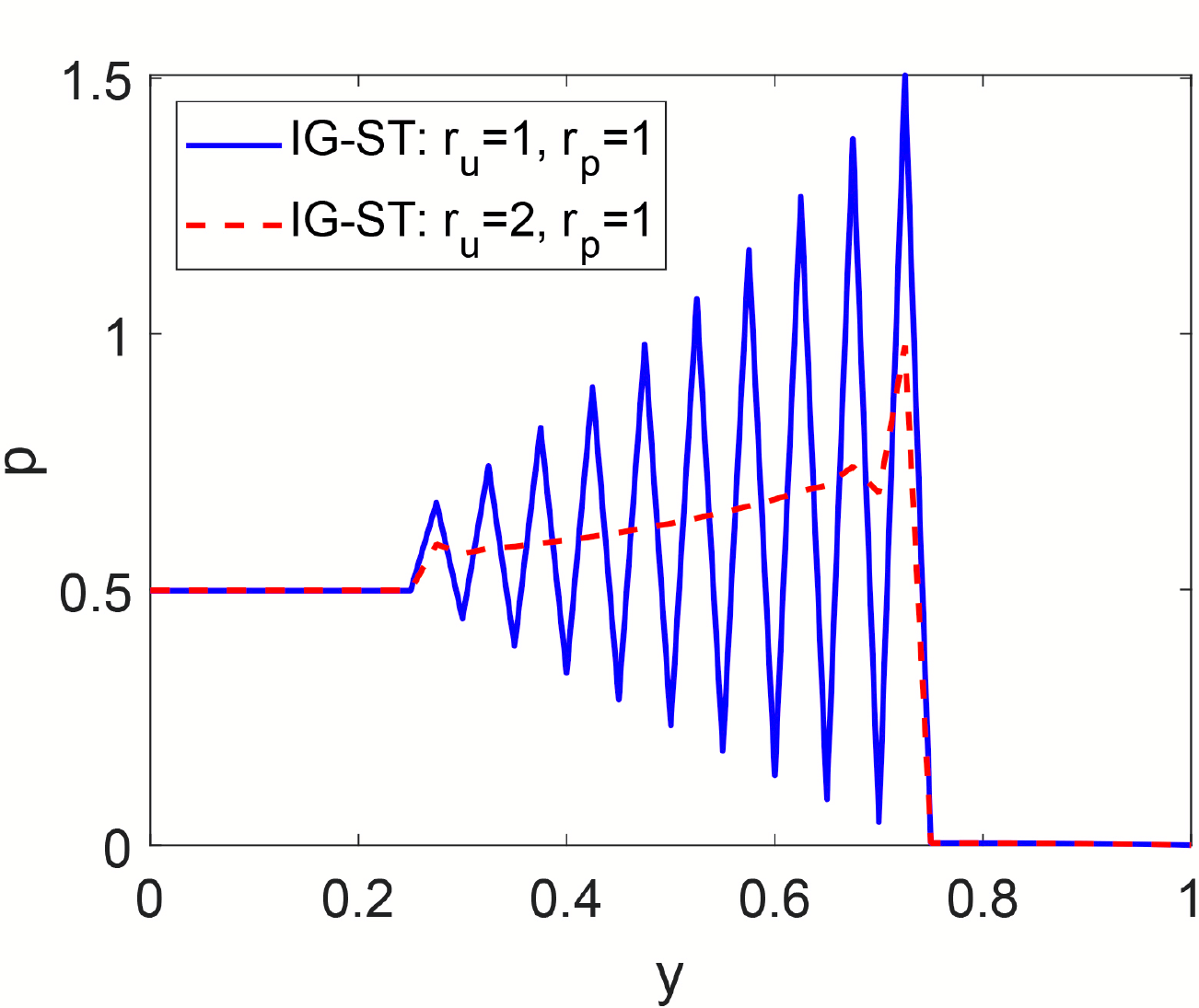}
			{(a) \ \small  Pressure  for   low-permeable layer.}
		\end{center}
		\end{minipage}
		\hspace{0.5cm}
		\begin{minipage}{7cm}
		\begin{center}
			\includegraphics[scale=0.5]{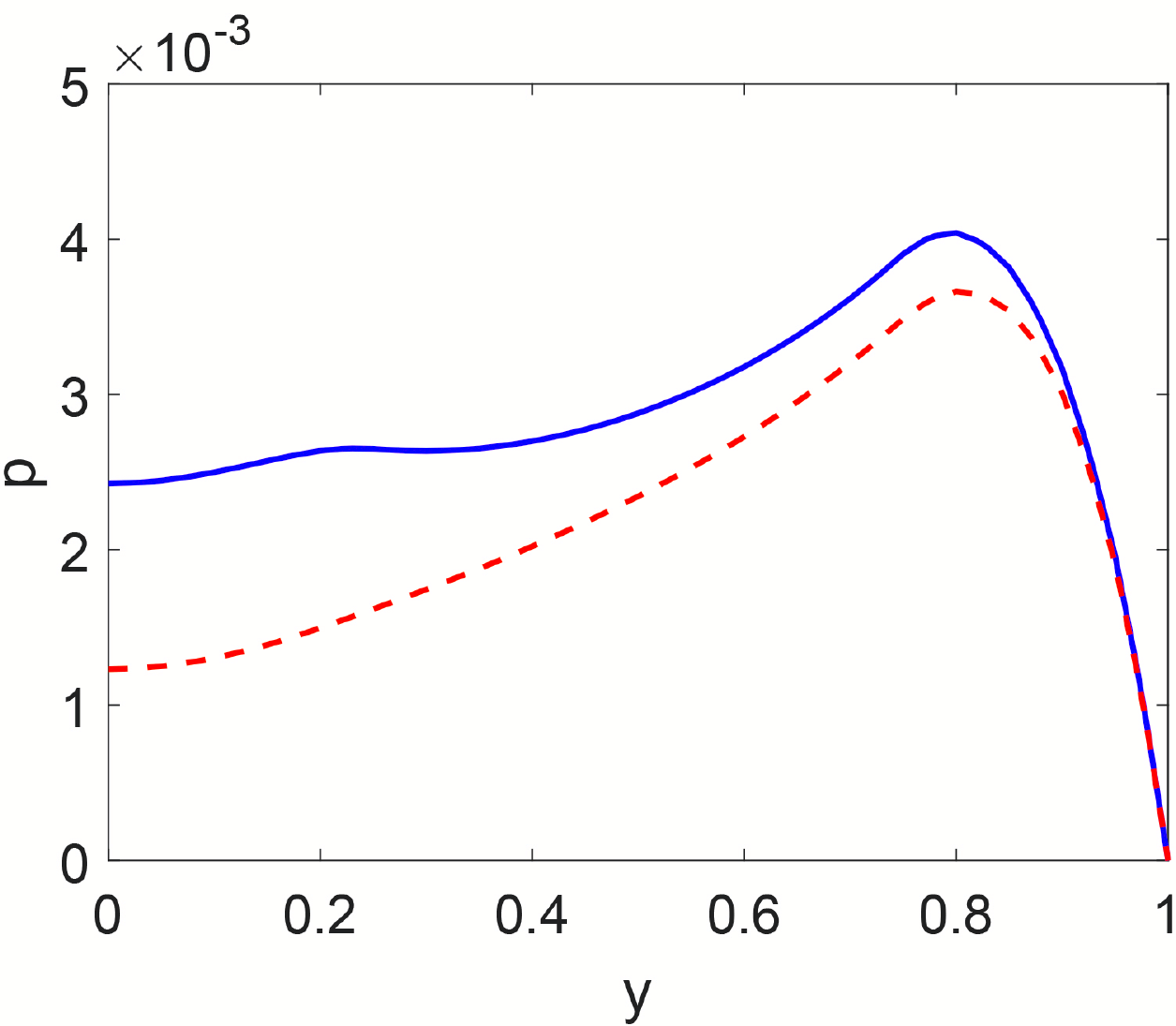}
			{(b) \ \small  Pressure  for   low-compressible layer.}
		\end{center}
		\end{minipage}
		\vspace{1cm}
		\begin{minipage}{7cm}
			\begin{center}
				\includegraphics[scale=0.5]{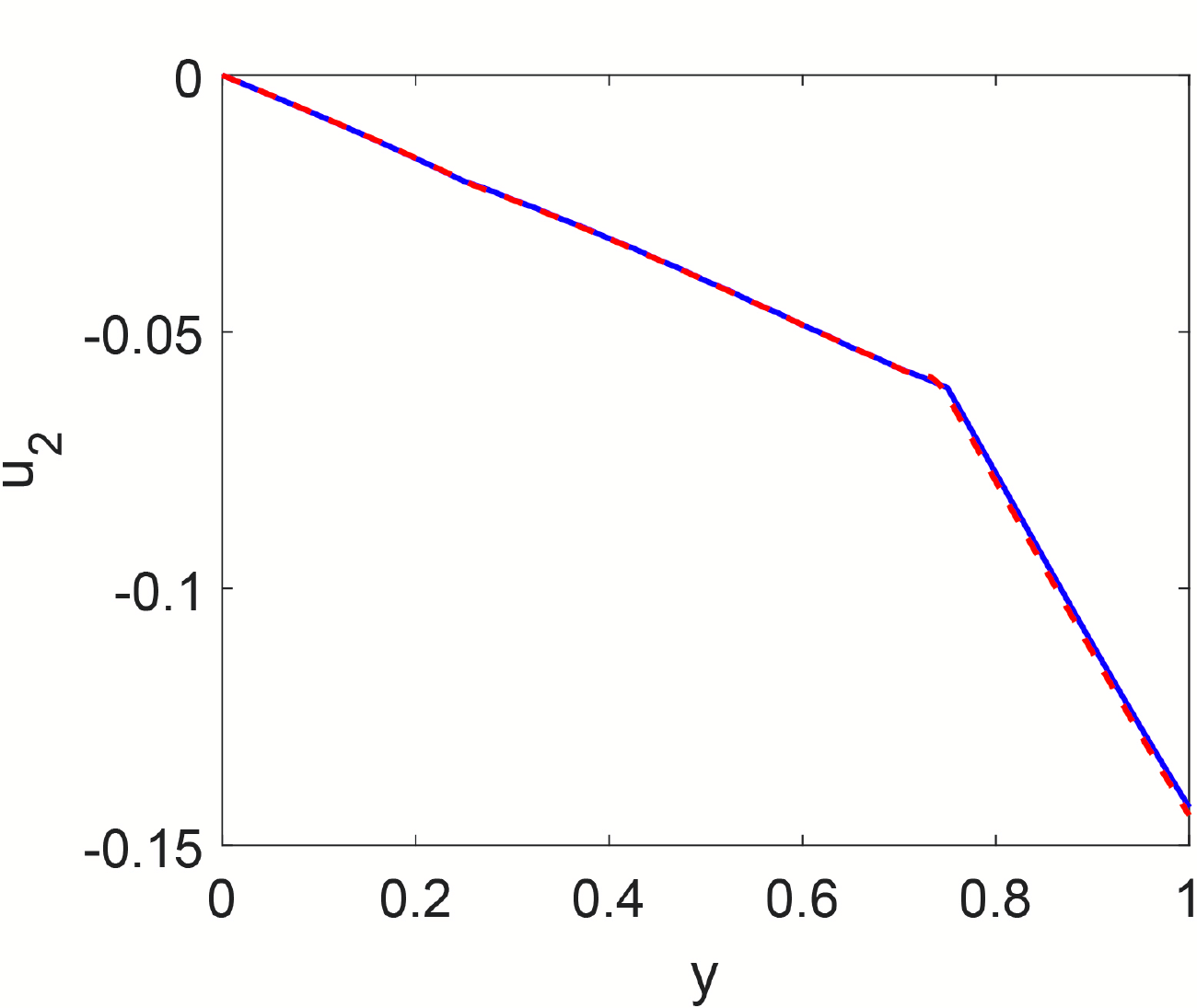}
				{(c) \ \small $y$-displacement for  low-permeable layer.}
			\end{center}
		\end{minipage}
		\hspace{0.5cm}
		\begin{minipage}{7cm}
			\begin{center}
				\includegraphics[scale=0.5]{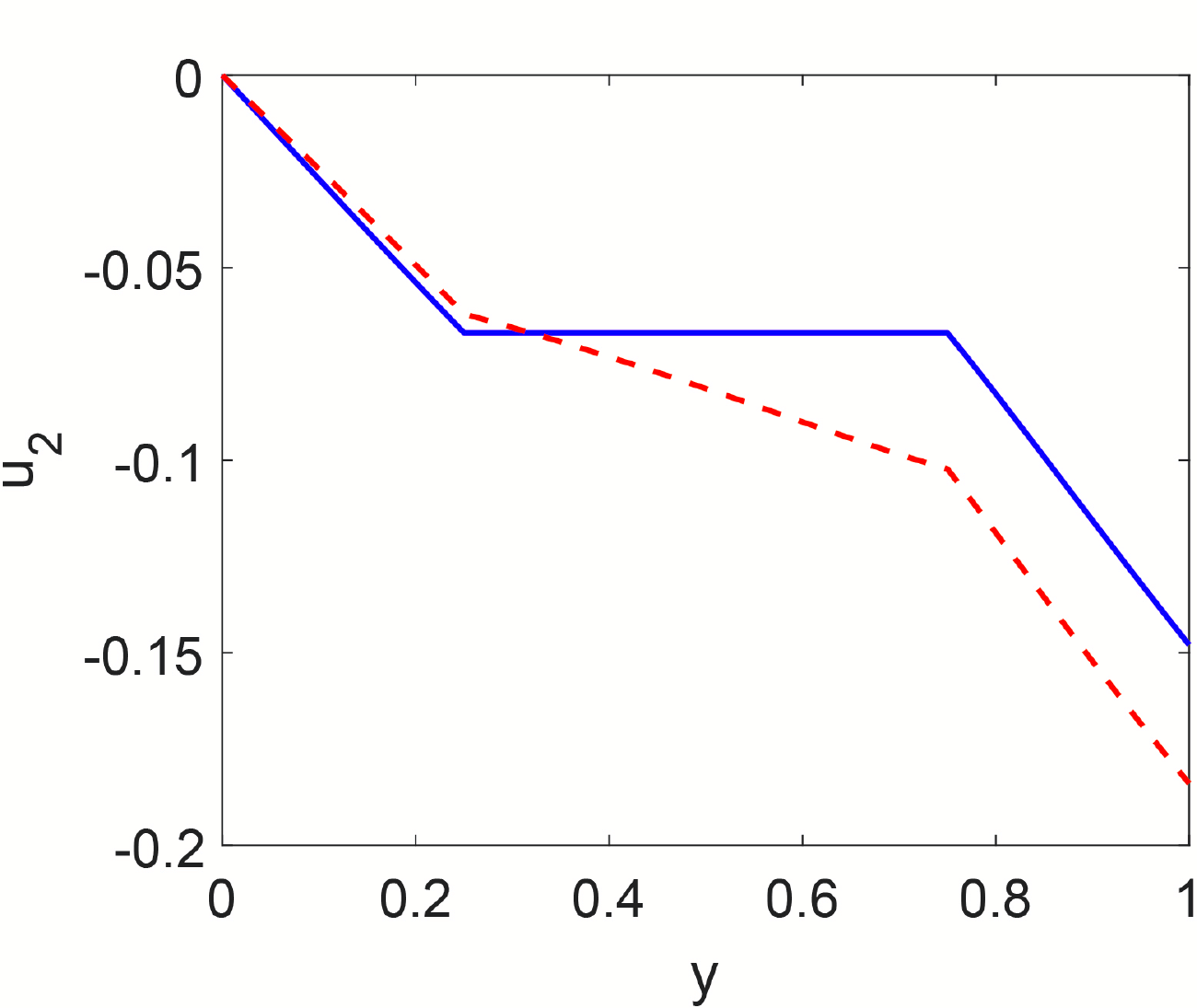}
				{(d)  \ \small  $y$-displacement for low-compr. layer.}
			\end{center}
		\end{minipage}
	\end{center}
	\caption{ \small  The results for the IG-ST method and the test cases of low-permeable  and low-compressible  layer along the plot line $(0.75,0)$ - $(0.75,1)$ at final time $T=1$.  Numerical solutions for mixed degrees are plotted in red, for equal degrees  in blue respectively. Mixed degrees reduce the pressure oscillations and the locking effect.    }
	\label{fig :stability2}
\end{figure}

The conclusion of this subsection is as follows. 
For the space-time method pressure oscillations but also  locking may  be present, mainly in the case of small permeability and large Lam\'{e} parameters. Mixed polynomial degrees stabilize the numerical solution, but nevertheless  discontinuous data  lead,  independent of the polynomial degrees, to local overshoots.


\section{Concluding remarks}
\label{sec:conclusion}
We have introduced and analyzed a novel isogeometric method for the Biot two-field system. It is based on a space-time discretization and allows to use the spline machinery to achieve arbitrary high convergence order, given sufficient regularity of the exact solution. By several numerical examples we have validated the theory and demonstrated the applicability, even for a
3D geometry. Moreover, the well-known problem of pressure oscillations has been addressed.

In our view, there are two major issues which should be treated in future work. On the one hand, although mixed methods lead to substantial improvements, a closer look at the pressure instability from a theoretical point of view might be advisable in the 
context of the space-time approach.
On the other hand it is desirable to consider the linear algebra for the resulting large-scale linear system in detail. Eventually, this may make the space-time approach also very competitive with respect to computing times. 



\footnotesize
\bibliographystyle{siam}
\bibliography{References}

\end{document}